\newcommand{\Id}{\mathbb{I}}
\newcommand{\R}{\mathbb{R}}
\newcommand{\Rext}{\R\cup\{+\infty\}}
\newcommand{\set}[1]{\left\{#1\right\}}
\newcommand{\sets}[1]{\{#1\}}
\newcommand{\norms}[1]{\Vert#1\Vert}
\newcommand{\Eproof}{\hfill $\square$}
\newcommand{\prox}{\mathrm{prox}}
\newcommand{\proj}{\mathrm{proj}}
\newcommand{\argmin}{\mathrm{arg}\!\displaystyle\min}
\newcommand{\dom}[1]{\mathrm{dom}(#1)}
\newcommand{\zero}[1]{{\boldsymbol{0}}}
\newcommand{\Xc}{\mathcal{X}}
\newcommand{\Hc}{\mathcal{H}}
\newcommand{\Lc}{\mathcal{L}}
\newcommand{\Tc}{\mathcal{T}}
\newcommand{\Nc}{\mathcal{N}}
\newcommand{\Vc}{\mathcal{V}}
\newcommand{\iprod}[1]{\left\langle #1\right\rangle}
\newcommand{\iprods}[1]{\langle #1\rangle}
\newcommand{\intx}[1]{\mathrm{int}\left(#1\right)}
\newcommand{\BigO}[1]{\mathcal{O}\left(#1\right)}
\newcommand{\SmallO}[1]{o\left(#1\right)}
\newcommand{\zer}[1]{\mathrm{zer}(#1)}
\newcommand{\graph}[1]{\mathrm{graph}(#1)}
\newcommand{\myeq}[2]{
\vspace{-0.5ex}
\begin{equation}\label{#1}
{#2}
\vspace{-0.25ex}
\end{equation}
}
\newcommand{\beforesec}{\vspace{-3ex}}
\newcommand{\aftersec}{\vspace{-2.25ex}}
\newcommand{\beforesubsec}{\vspace{-4ex}}
\newcommand{\aftersubsec}{\vspace{-2.5ex}}
\newcommand{\beforepara}{\vspace{-2ex}}
\begin{document}

\title{Halpern-Type Accelerated and Splitting Algorithms For Monotone Inclusions}
%\subtitle{Do you have a subtitle?\\ If so, write it here}

\titlerunning{Halpern-Type Accelerated and Splitting Algorithms For Monotone Inclusions}        % if too long for running head

\author{Quoc Tran-Dinh$^{*}$ \and Yang Luo}

\authorrunning{Q. Tran-Dinh \textit{and} Y. Luo} % if too long for running head

\institute{Quoc Tran-Dinh \and  Yang Luo \at
		Department of Statistics and Operations Research\\
		The University of North Carolina at Chapel Hill, 318 Hanes Hall, UNC-Chapel Hill, NC 27599-3260.\\ 
		Email: \url{quoctd@email.unc.edu}, \url{yangluo@unc.edu}.
\and
$^{*}$Corresponding author.
}

\date{Received: date / Accepted: date}
%\date{Version 3: The first version  was posted on Arxiv on March 13}
% The correct dates will be entered by the editor

\maketitle

\vspace{-3ex}
\begin{abstract}
In this paper, we develop a new type of accelerated algorithms to solve different classes of maximally monotone equations as well as inclusions. 
Instead of using Nesterov's accelerating approach, our methods rely on a so-called Halpern fixed-point iteration in \cite{halpern1967fixed}, and has recently exploited by a number of researchers, including \cite{diakonikolas2020halpern,yoon2021accelerated}.
Firstly, we derive a new variant of the extra-anchored  gradient scheme in \cite{yoon2021accelerated} based on Popov's past extra-gradient method \cite{popov1980modification} to solve a maximally monotone equation $G(x) = 0$.
We show that our method achieves the same $\BigO{1/k}$ convergence rate (up to a constant factor) as in the extra-anchored gradient algorithm  on the operator norm $\norms{G(x_k)}$, but requires only one evaluation of $G$ at each iteration, where $k$ is the iteration counter.
Next, we develop two splitting algorithms to approximate a zero point of the sum of two maximally monotone operators.
The first algorithm relies on the extra-gradient method combining with a splitting technique, while the second one is its Popov's variant which can reduce the per-iteration complexity.
Both algorithms appear to be new and can be viewed as accelerated variants of the Douglas-Rachford (DR) splitting method.
They both achieve $\BigO{1/k}$ rates on the norm $\norms{G_{\gamma}(x_k)}$ of the forward-backward residual operator $G_{\gamma}(\cdot)$ associated with the problem.
We also propose a new accelerated Douglas-Rachford splitting scheme for solving this problem which also attains $\BigO{1/k}$ convergence rate on $\norms{G_{\gamma}(x_k)}$ under only maximally monotone assumptions using both varying and constant stepsizes.
Finally, we specify our first algorithm to solve convex-concave minimax problems and apply our accelerated DR scheme to derive a new accelerated variant of the alternating direction method of multipliers (ADMM).
\end{abstract}

\keywords{
Accelerated first-order methods \and
Halpern fixed-point iteration \and
splitting scheme \and
monotone inclusion \and
convex-concave minimax problem
}
\subclass{90C25   \and 90-08}

% Main paper.
%%%%%%%%%%%%%%%%%%%%%%%%%%%%%%%%%%%%%%%%%%%%%%%%%
%% 1. Introduction.
%%%%%%%%%%%%%%%%%%%%%%%%%%%%%%%%%%%%%%%%%%%%%%%%%
%%%%%%%%%%%%%%%%%%%%%%%%%%%%%%%%%%%%%%%%%%%%%%%%
%%% 1. Introduction.    
%%%%%%%%%%%%%%%%%%%%%%%%%%%%%%%%%%%%%%%%%%%%%%%%
%%%% 1. Introduction.
\beforesec
\section{Introduction}\label{sec:intro}
\aftersec
This paper aims at developing a new class of accelerated algorithms to approximate a solution of the following maximally monotone inclusion:
Finding $x^{\star}\in\R^p$ such that
%\begin{equation}\label{eq:MI_2o}
\myeq{eq:MI_2o}{
0 \in  G(x^{\star}),
}
%\end{equation}
where  $G : \R^p \rightrightarrows 2^{\R^p}$ is a maximally monotone operator (see Section~\ref{sec:basic}) in a finite-dimensional space $\R^p$.
Let $\zer{G} := \set{x^{\star} \in \R^p : 0 \in G(x^{\star})}$ be the solution set of \eqref{eq:MI_2o}, which is assumed to be nonempty. 
The inclusion \eqref{eq:MI_2o} is a central problem in convex optimization, nonlinear analysis,  differential equations, and many other related fields, see, e.g., \cite{Bauschke2011,Borwein2006a,reginaset2008,Combettes2005,Facchinei2003,kinderlehrer2000introduction,phelps2009convex,Zeidler1984}.
Although \eqref{eq:MI_2o} looks simple, it covers several classical problems as special cases, including convex optimization, complementarity, fixed-point, variational inequality, and convex-concave minimax problems, see, e.g., \cite{Bauschke2011,Facchinei2003,Konnov2001,ryu2016primer} for the relationship between them and \eqref{eq:MI_2o}.

%%% A. Related work.
\beforepara
\paragraph{\textbf{Related work.}}
Theory and methods for solving \eqref{eq:MI_2o} have been extensively studied in the literature for many decades, see, e.g., \cite{Bauschke2011,reginaset2008,Facchinei2003,phelps2009convex} and the references quoted therein.
Without any further assumption on $G$, a representative method to solve \eqref{eq:MI_2o} is perhaps the proximal-point algorithm \cite{Martinet1970,Rockafellar1976b}, which generates a sequence $\sets{x_k}$ as $x_{k+1} := J_{\gamma G}(x_k)$, where $J_{\gamma G}$ is the resolvent  (see Section~\ref{sec:basic}) of $\gamma G$ for any positive stepsize $\gamma > 0$.
If $G$ is single-valued and cocoercive or Lipschitz continuous, then gradient-type or forward-type methods have been widely proposed to solve this special case \cite{Facchinei2003,Konnov2001}.
When $G$ possesses further structures, with the most common one being the sum of two maximally monotone operators $A$ and $B$ as $G = A + B$, different algorithms have been developed to solve  \eqref{eq:MI_2o} by exploiting operations of $A$ and $B$ individually such as forward-backward splitting, Douglas-Rachford splitting, and forward-backward-forward splitting schemes \cite{Bauschke2011,boct2015inertial,BricenoArias2011,combettes2012primal,douglas1956numerical,Facchinei2003,lorenz2015inertial,Lions1979,tseng2000modified}.
In the context of variational inequality (VIP), where $G = B + \Nc_{\Xc}$ with $B$ being a maximally monotone operator and $\Nc_{\Xc}$ being the normal cone of a closed and convex set $\Xc$, various solution methods have been developed such as projected gradient and extra-gradient methods, and their modifications and variants, see, e.g., \cite{censor2011subgradient,Facchinei2003,Konnov2001,korpelevich1976extragradient,malitsky2015projected,malitsky2019golden,malitsky2014extragradient,Monteiro2011,Nesterov2007a,popov1980modification,solodov1999new,tseng2000modified}.

It is well-known that \eqref{eq:MI_2o} covers the optimality condition of  a convex-concave minimax problem as an important special case.
This problem plays a key role in convex optimization, game theory, and robust optimization. 
It has recently attracted a great attention due to key applications in machine learning and robust optimization such as adversarial generative networks (GANs), distributionally robust optimization, and robust learning \cite{goodfellow2014generative,madry2018towards}.
Many solution methods have been developed to solve convex-concave minimax problems based on the framework of monotone inclusions such as prox-methods, Arrow-Hurwicz, extra-gradient, and primal-dual hybrid gradient  (PDHG) schemes, and their variants, see, e.g., \cite{Chambolle2011,Esser2010,korpelevich1976extragradient,Nemirovskii2004}.
Their convergence rates and theoretical complexity have been widely investigated in the last decades, see, e.g., \cite{chambolle2015convergence,Chen2013a,combettes2012primal,Davis2014a,hamedani2018iteration,Nesterov2005c,tseng2008accelerated,TranDinh2015b}.

In recent years, due to the revolution of accelerated first-order methods and their applications in large-scale optimization and machine learning problems, convergence rates, computational complexity, and theoretical performance theory have been extensively developed for optimization algorithms and their extensions.
One of such research topics is theoretical convergence rates, especially sublinear convergence rates  for first-order-type algorithms.
In the context of variational inequality and convex-concave minimax problems, \cite{Nemirovskii2004} and \cite{Nesterov2007a} appear to be a few first attempts in this topic.
Both papers show an $\BigO{1/k}$ convergence rate on the gap function associated with the problem under standard assumptions, where $k$ is the iteration counter. 
Note that existing methods prior to \cite{Nemirovskii2004,Nesterov2007a} often focus on asymptotic convergence or linear rates, while their sublinear rates are largely elusive.
The sublinear rates are usually $\mathcal{O}\big( 1/\sqrt{k}\big)$ under standard assumptions, which are significantly slower than $\BigO{1/k}$ rates, see, e.g.,  \cite{davis2016convergence,Davis2014b,Monteiro2011}. 
Unlike convex optimization where the objective residual can be used to characterize convergence rate, it is often challenging to design an appropriate criterion or ``metric'' to characterize convergence rates of approximate solutions in monotone inclusions. 
One natural metric is the residual norm $\norms{x - T(x)}$ of a fixed-point operator $T$ associated with \eqref{eq:MI_2o} such as a resolvent mapping or a forward-backward splitting operator.
While the distance $\norms{x_k - x^{\star}}$ between the iterate $x_k$ to a solution $x^{\star}$ is a very common metric, it ramains challenging to establish a sublinear rate on this criterion. 

Recently, some remarkable progress has been made to develop accelerated algorithms for variational inequalities and monotone inclusions, including \cite{adly2021first,attouch2020convergence,chen2017accelerated,he2016accelerated,juditsky2011solving,kim2021accelerated,kolossoski2017accelerated,mainge2021fast}.
These methods essentially rely on Nesterov's accelerated ideas \cite{Nesterov1983} by augmenting  momentum or inertial terms to the iterate updates.
An alternative approach to design accelerated methods has been recently opened up, which relies on a classical result in \cite{halpern1967fixed}. 
This technique is now referred to as the \textit{Halpern-type fixed-point iteration} in this paper.
In \cite{lieder2021convergence}, the author provides an elementary analysis to achieve optimal convergence rate of a Halpern fixed-point method, while \cite{diakonikolas2020halpern} further investigates this type of scheme for solving monotone equations and VIPs by using Lyapunov function analysis. 
The authors in \cite{yoon2021accelerated} extend this scheme to the extra-gradient method and develop an elegant analysis framework to accelerate their algorithms, which we will exploit in this paper. 
A recent extension to co-monotone operators is considered in \cite{lee2021fast}, which can solve a limited class of nonconvex-nonconcave minimax problems, where the same convergence rates can be obtained.

%%% B. Our goals.
\beforepara
\paragraph{\textbf{Our goal.}}
In this paper, we will focus on two settings of \eqref{eq:MI_2o}.
The first one is when $G$ is single-valued and $L$-Lipschitz continuous, and the second class is $G = A + B$, where $A$ and $B$ are two maximally monotone and possibly set-valued operators from $\R^p$ to $2^{\R^p}$.
In the second setting, problem \eqref{eq:MI_2o} can be rewritten as follows:
%\begin{equation}\label{eq:MI2}
\myeq{eq:MI2}{
0 \in A(x) + B(x).
}
%\end{equation}
Due to its special structure, splitting methods that rely on operations of $A$ and $B$ separately are preferable to solve \eqref{eq:MI2}, see \cite{Bauschke2011,Lions1979}.
Note that when $A = \Nc_{\Xc}$, the normal cone of a nonempty, closed, and convex set $\Xc$, and $B$ is single-valued, then \eqref{eq:MI2} reduces to: finding $x^{\star} \in \Xc$ such that $\iprods{B(x^{\star}), x - x^{\star}} \geq 0$ for all $x\in \Xc$, a variational inequality problem (VIP), which is perhaps one of the most common problems studied in the literature \cite{Facchinei2003,kinderlehrer2000introduction,Konnov2001}. 

Our main goal in this paper is to develop a new class of accelerated algorithms to approximate a solution of \eqref{eq:MI_2o} and \eqref{eq:MI2} under the above two structures, respectively.
Our approach relies on the ideas of Halpern-type fixed-point iteration scheme in \cite{halpern1967fixed}, and recently studied in \cite{diakonikolas2020halpern,lieder2021convergence,yoon2021accelerated}.
Our analysis technique follows the Lyapunov analysis as in \cite{diakonikolas2020halpern} but with new Lyapunov functions, and exploits the ideas of analysis in \cite{yoon2021accelerated}.
However, our splitting algorithms are fundamentally different from those works \cite{diakonikolas2020halpern,yoon2021accelerated}.

%%% C. Our contribution
\beforepara
\paragraph{\textbf{Our contribution.}}
To this end, our main contribution can be summarized as follows:
\begin{itemize}
\itemsep= -0.0em
\item[(a)] Firstly, we develop a new accelerated algorithm to solve a maximally monotone equation $G(x) = 0$ by combining the Halpern-type fixed-point iteration \cite{halpern1967fixed} and Popov's past extra-gradient method \cite{popov1980modification}.
Our scheme achieves $\BigO{1/k}$ rate on the last iterate for the operator norm $\norms{G(x_k)}$ under only the monotonicity and the Lipschitz continuity of $G$, where $k$ is the iteration counter.
This rate is optimal (up to a constant factor) \cite{yoon2021accelerated}.

\item[(b)] Secondly, we proposed a novel accelerated splitting extra-gradient method to solve \eqref{eq:MI2}, where both $A$ and $B$ are maximally monotone, $B$ is single-valued and Lipschitz continuous.
We establish $\BigO{1/k}$ rate on the forward-backward residual operator norm associated with \eqref{eq:MI2}.
To reduce the per-iteration complexity of this method, we derive a new variant using Popov's past-gradient scheme.
This new variant still achieves the same convergence rate (up to a constant factor) under the same assumptions.
Both algorithms can be viewed as accelerated variants of the Douglas-Rachford splitting method.

\item[(c)] Thirdly, we develop a new accelerated Douglas-Rachford splitting method to solve \eqref{eq:MI2} under only the maximal monotonicity of $A$ and $B$.
This algorithm essentially has the same per-iteration complexity as the standard DR splitting method.
Our algorithm achieves $\BigO{1/k}$ rate on the norm of the forward-backward residual operator associated with \eqref{eq:MI2} using both varying and constant stepsizes.
Our scheme and its convergence guarantee are different from recent works \cite{kim2021accelerated,patrinos2014douglas} (see Section~\ref{sec:aDR_methods} for more details).

\item[(d)] Finally, we specify our first algorithm to solve convex-concave minimax problems in both  nonlinear and linear cases.
We also apply our accelerated DR scheme to derive a new variant of ADMM, where our theoretical convergence rate guarantee is still valid.

\end{itemize}
Let us highlight the following points of our contribution.
Firstly, our methods rely on a Halpern-type scheme \cite{halpern1967fixed} recently  studied in \cite{diakonikolas2020halpern,lieder2021convergence,lee2021fast,yoon2021accelerated}, which is different from Nesterov's acceleration approach as used in 
\cite{attouch2020convergence,he2016accelerated,kim2021accelerated,mainge2021fast}.
Secondly, our first algorithm uses a different Lyapunov function and requires only one operator evaluation per iteration compared to the anchored extra-gradient methods in \cite{lee2021fast,yoon2021accelerated}.
Thirdly, our second and third algorithms have convergence guarantees without the cocoerciveness of $B$ as in \cite{mainge2021fast}.
However, they use the resolvent of $B$ instead of the forward operator $B$ as in \cite{mainge2021fast}.
Though these algorithms can be viewed as variants of the DR splitting method, their convergence rate guarantee is on the shadow sequence $\sets{x_k}$, which is different from the intermediate sequence $\sets{u_k}$ (\emph{cf.} Subsection \ref{subsec:s_AEG_scheme}) as in \cite{kim2021accelerated}.
Nevertheless, their convergence guarantee relies on the Lipschitz continuity of $B$.
To our best knowledge, both algorithms are the first accelerated splitting methods using the Halpern-type fixed-point iteration.
Finally, our accelerated DR splitting method also has a convergence rate on $\norms{G_{\gamma}(x_k)}$ of the shadow sequence $\sets{x_k}$, and requires only the maximal monotonicity of $A$ and $B$, without Lipschitz continuity.
Our methods can be easily extended to the sum of three operators $0 \in A(x) + B(x) + C(x)$, where $A$ and $B$ are maximally monotone, and $C$ is single-valued and $\rho$-cocoercive (see Remark~\ref{re:3opertors}). 

%%% D. Paper outline
\beforepara
\paragraph{\textbf{Paper outline.}}
The rest of this paper is organized as follows.
Section \ref{sec:basic} recalls some basic concepts of monotone operators and related properties which will be used in the sequel.
It also reviews the Halpern fixed-point iteration in \cite{diakonikolas2020halpern,halpern1967fixed}.
Section \ref{sec:a_PP_method} develops a Halpern-type accelerated variant for Popov's past extra-gradient method in \cite{popov1980modification}.
Section \ref{subsec:EAG_extension} proposes two different splitting algorithms to solve \eqref{eq:MI2}, where $A$ is maximally monotone and $B$ is single-valued and $L$-Lipschitz continuous.
Section \ref{sec:aDR_methods} presents a new accelerated DR splitting scheme to solve \eqref{eq:MI2} under only the monotonicity of $A$ and $B$.
Section \ref{sec:applications} is devoted to applications of our methods in convex-concave minimax problems and developing a new accelerated ADMM variant.
We close this paper by some concluding remarks. % and sketching some interesting future research topics related to this paper.

%%%%%%%%%%%%%%%%%%%%%%%%%%%%%%%%%%%%%%%
%%% 2. Background and Preliminary Results.
%%%%%%%%%%%%%%%%%%%%%%%%%%%%%%%%%%%%%%%
\beforesec
\section{Background and Preliminary Results}\label{sec:basic}
\aftersec
We recall some basic concepts from convex analysis and theory of monotone operators using in this paper.
These concepts and properties can be found, e.g., in \cite{Bauschke2011,Facchinei2003,Konnov2001}.

%%% A. Maximally monotone operators.
\beforepara
\paragraph{\textbf{Maximal monotonicity and cocoerciveness.}}
We work with finite dimensional spaces $\R^p$ and $\R^n$ equipped with the standard inner product $\iprods{\cdot, \cdot}$ and Euclidean norm $\norms{\cdot}$.
For a set-valued mapping $G : \R^p \rightrightarrows 2^{\R^p}$, $\dom{G} = \set{x \in\R^p : G(x) \not= \emptyset}$ denotes its domain, $\graph{G} = \set{(x, y) \in \R^p\times \R^p : y \in G(x)}$ denotes its graph, where $2^{\R^p}$ is the set of all subsets of $\R^p$.
% and  $\range{G} := \cup_{x\in\dom{G}}G(x)$ is its range.
The inverse of $G$ is defined as $G^{-1}(y) := \sets{x \in \R^p : y \in G(x)}$.

For a set-valued mapping $G : \R^p \rightrightarrows 2^{\R^p}$, we say that $G$ is monotone if $\iprods{u - v, x - y} \geq 0$ for all $x, y \in \dom{G}$, $u \in G(x)$, and $v \in G(y)$.
$G$ is said to be $\mu_G$-strongly monotone (or sometimes called coercive) if $\iprods{u - v, x - y} \geq \mu_G\norms{x - y}^2$ for all $x, y \in \dom{G}$, $u \in G(x)$, and $v \in G(y)$, where $\mu_G > 0$ is called a strong monotonicity parameter.
If $\mu_G < 0$, then we say that $G$ is weakly monotone.
If $G$ is single-valued, then these conditions reduce to $\iprods{G(x) - G(y), x - y} \geq 0$ and $\iprods{G(x) - G(y), x - y} \geq \mu_G\norms{x - y}^2$ for all $x, y\in\dom{G}$, respectively.
We say that $G$ is maximally monotone if $\graph{G}$ is not properly contained in the graph of any other monotone operator.
Note that $G$ is maximally monotone, then $\alpha G$ is also maximally monotone for any $\alpha > 0$, and if $G$ and $H$ are maximally monotone, and $\dom{F}\cap\intx{\dom{H}} \not=\emptyset$, then $G + H$ is maximally monotone.

A single-valued operator $G$ is said to be $L$-Lipschitz continuous if $\norms{G(x) - G(y)} \leq L\norms{x - y}$ for all $x, y\in\dom{G}$, where $L \geq 0$ is a Lipschitz constant. 
If $L = 1$, then we say that $G$ is nonexpansive, while if $L \in [0, 1)$, then we say that $G$ is $L$-contractive, and $L$ is its contraction factor.
We say that $G$ is $\frac{1}{L}$-cocoercive if $\iprods{G(x) - G(y), x - y} \geq \frac{1}{L}\norms{G(x) - G(y)}^2$ for all $x, y\in\dom{G}$.
If $L = 1$, then we say that $G$ is firmly nonexpansive.
Note that if $G$ is $\frac{1}{L}$-cocoercive, then it is also monotone and $L$-Lipschitz continuous, but the reverse statement is not true in general.
If $L < 0$, then we say that $G$ is $\frac{1}{L}$-co-monotone \cite{bauschke2020generalized}.

%%%% B. Resolvent and reflection operators.
\beforepara
\paragraph{\textbf{Resolvent operator.}}
The operator $J_G(x) := \set{y \in \R^p : x \in y + G(y)}$ is called the resolvent of $G$, often denoted by $J_G(x) = (\Id + G)^{-1}(x)$, where $\Id$ is the identity mapping.
Clearly, evaluating $J_G$ requires solving a strongly monotone inclusion $0 \in y - x + G(y)$.
If $G$ is monotone, then $J_G$ is singled-valued, and if $G$ is maximally monotone then $J_G$ is singled-valued and $\dom{J_G} = \R^p$.
%The reflection operator of $G$ is defined as $R_G(x) = 2J_{G}(x) - x$ ($R_G$ is also called reflected or Cayley operator).
%If $G$ is monotone, then both $J_G$ and $R_G$ are nonexpansive. 
If $G$ is monotone, then $J_G$ is firmly nonexpansive \cite[Proposition 23.10]{Bauschke2011}. 

Given a proper, closed, and convex function $f : \R^p\to \Rext$, $\dom{f} := \{ x\in\R^p : f(x) < +\infty \}$ denotes its domain, and $\partial{f}(x) := \{ w\in\R^p : f(y) \geq f(x) + \iprods{w, y-x}, \ \forall y\in\dom{f} \}$ denotes its subdifferential.
The subdiffrential $\partial{f}$ is maximally monotone.
The resolvent $J_{\partial{f}}(x)$ of $\partial{f}$ becomes the proximal operator of $f$, denoted by $\prox_{f}(x)$, and defined as $\prox_{f}(x) := \mathrm{arg}\min_y\set{ f(y) + \tfrac{1}{2}\norms{y-x}^2}$.
If $f = \delta_{\Xc}$, the indicator of a convex set $\Xc$, then $\prox_{f}$ reduces to the projection $\proj_{\Xc}(\cdot)$ onto $\Xc$.
Since $\partial{f}$ is maximally monotone, $\prox_f$ and $\proj_{\Xc}$ have the same properties as the resolvent of a maximally monotone operator.

%%%% C. Halpern fixed-point iteration.
\beforepara
\paragraph{\textbf{Halpern fixed-point iteration scheme.}}
In \cite{halpern1967fixed}, B. Halpern proposed the following iterative scheme to approximate a fixed-point $x^{\star} = T(x^{\star})$ of a nonexpansive mapping $T$:
\myeq{eq:halpern_scheme}{
x_{k+1} := \beta_k x_0 + (1- \beta_k)T(x_k),
}
%\end{equation}
where $x_0 = 0$ is chosen and $\vert 1 - \beta_k\vert < 1$.
Asymptotic convergence results were proved in \cite{halpern1967fixed,wittmann1992approximation} under some suitable choices of $\beta_k$ (i.e. $\beta_k\to 0$ as $k\to\infty$, $\sum_{k}\beta_k = \infty$, and $\sum_k\vert \beta_{k} - \beta_{k-1}\vert < \infty$).
In a recent work \cite{lieder2021convergence}, Lieder first proved an $\BigO{1/k}$ sublinear convergence rate on the residual norm $\norms{x_k - T(x_k)}$ for \eqref{eq:halpern_scheme} by using $\beta_k = \frac{1}{k+2}$ and any initial point $x_0$.
Diakonikolas further extended this idea to maximally monotone equations and VIPs in \cite{diakonikolas2020halpern}, where a Lyapunov function of the form $\Vc_{\beta}(x) := \norms{x - T(x)}^2 + \frac{\beta}{1-\beta}\iprods{x - T(x), x - x_0}$ was used.
When solving a monotone equation $G(x) = 0$, one interesting observation is that \eqref{eq:halpern_scheme} uses a $\frac{2}{L}$ stepsize instead of the usual $\frac{1}{L}$ stepsize widely used in standard forward/gradient methods, where $L$ is the Lipschitz constant of $G$.

%%%%%%%%%%%%%%%%%%%%%%%%%%%%%%%%%%%%%%%
%%% 3. Accelerated Popov Method For Maximally Monotone Equations.
%%%%%%%%%%%%%%%%%%%%%%%%%%%%%%%%%%%%%%%
\beforesec
\section{Accelerated Popov Method For Monotone Equations}\label{sec:a_PP_method}
\aftersec
\paragraph{\textbf{Motivation.}}
Let us first consider a special case of \eqref{eq:MI_2o} where $G$ is single-valued and $L$-Lipschitz continuous.
In this case, \eqref{eq:MI_2o} becomes a maximally monotone equation of the form:
%\begin{equation}\label{eq:ME}
\myeq{eq:ME}{
G(x^{\star}) = 0.
}
%\end{equation}
Though special, it still covers the smooth case of convex-concave minimax problems.
In \cite{yoon2021accelerated}, Yoon and Ryu applied the Halpern-type scheme developed in \cite{halpern1967fixed} to extra-gradient and obtain an accelerated algorithm.
Unlike the original paper \cite{halpern1967fixed} and a direct proof technique in \cite{lieder2021convergence}, the analysis in  \cite{yoon2021accelerated} is different and relies on a Lyapunov function proposed in \cite{diakonikolas2020halpern}.
We highlight that the method in \cite{diakonikolas2020halpern} relies on a standard forward (or gradient) scheme used in variational inequality, and its convergence guarantee requires the cocoerciveness of $G$.
To go beyond this cocoerciveness, the classical extra-gradient scheme from \cite{korpelevich1976extragradient} has been used in  \cite{yoon2021accelerated}.
However, extra-gradient-based methods require two evaluations of $G$ at each iteration. 
This observation motivates us to develop a new variant of the extra-anchored  gradient method \cite{yoon2021accelerated} by exploiting \cite{popov1980modification} proposed by Popov.

Note that when solving \eqref{eq:ME}, the extra-gradient method is equivalent to the forward-backward-forward (or Tseng's modified forward-backward) scheme in \cite{tseng2000modified}. 
However, another method proposed in \cite{popov1980modification}, which we call Popov's method, also achieves a similar convergence guarantee using the same assumptions as in extra-gradient method but only requires one evaluation of $G$ at each iteration.
Although the theoretical performance of algorithms depends on the constant factor in the final complexity bounds, the use of two evaluations at each iteration leads to some motivation to remove it.
Numerous attempts have been made to improve or remove this extra evaluation in the context of VIPs, see, e.g., \cite{censor2011subgradient,malitsky2015projected,malitsky2019golden,malitsky2014extragradient,popov1980modification,solodov1999new,tseng2000modified}.
We also believe that this improvement is significant in stochastic settings. 
Finally, we highlight that Popov's method is also equivalent to the \textit{forward-reflected gradient} method in \cite{malitsky2015projected} and the \textit{optimistic gradient} method used in online learning  \cite{hsieh2019convergence} when solving \eqref{eq:ME}.

%%% 3. The accelerated Popov's method}\label{subsec:aPopov_method
\beforesubsec
\subsection{\textbf{The Derivation of Anchored Popov's Scheme}}\label{subsec:s3_derivation}
\aftersubsec
Inspired by \cite{diakonikolas2020halpern,yoon2021accelerated}, in this section, we develop an accelerated variant of Popov's method in \cite{popov1980modification} to approximate a solution of \eqref{eq:ME}.
Our scheme can be described as follows:
%\begin{equation}\label{eq:a_popov_scheme}
\myeq{eq:a_popov_scheme}{
\arraycolsep=0.3em
\left\{\begin{array}{lcl}
y_k &:= & \beta_kx_0 + (1-\beta_k)x_k - \eta_kG(y_{k-1}),  \vspace{1ex}\\
x_{k+1} &:= & \beta_kx_0 + (1-\beta_k)x_k - \eta_kG(y_k),
\end{array}\right.
}
%\end{equation}
where $\beta_k \in [0, 1)$ and $\eta_k > 0$ are given parameters, and $x_0 \in \R^p$ is fixed.
This scheme is similar to the extra-anchored gradient method in \cite{yoon2021accelerated}, except for $G(x_k)$ is replaced by $G(y_{k-1})$ to save one evaluation of $G$ at each iteration $k$.

By switching the first and second lines, this scheme can be written equivalently to
\begin{equation}\label{eq:a_popov_scheme0}
\arraycolsep=0.3em
\left\{\begin{array}{lcl}
x_{k+1} &:= & \beta_kx_0 + (1-\beta_k)x_k - \eta_kG(y_k),  \vspace{1ex}\\
y_{k+1} &:= & \beta_{k+1}x_0 + (1-\beta_{k+1})x_{k+1} - \eta_{k+1}G(y_k).
\end{array}\right.
\end{equation}
Clearly, if $\beta_k = 0$, then \eqref{eq:a_popov_scheme0} reduces to the original Popov's scheme in \cite{popov1980modification}.

Alternatively, from the second line of \eqref{eq:a_popov_scheme}, we also have $x_k = \beta_{k-1}x_0 + (1-\beta_{k-1})x_{k-1} - \eta_{k-1}G(y_{k-1})$.
Hence, $G(y_{k-1}) = \frac{1}{\eta_{k-1}}\left[ \beta_{k-1}x_0 + (1-\beta_{k-1})x_{k-1} - x_k \right]$.
Substituting this expression into the first line of \eqref{eq:a_popov_scheme}, we get the following update:
\begin{equation*}
y_k =  \left(\beta_k - \frac{ \beta_{k-1}\eta_k}{\eta_{k-1}}\right)x_0 + \left(1 - \beta_k + \frac{\eta_k}{\eta_{k-1}}\right)x_k - \frac{(1-\beta_{k-1})\eta_k}{\eta_{k-1}}x_{k-1}. 
\end{equation*}
Therefore, the scheme \eqref{eq:a_popov_scheme} can also be rewritten as 
\begin{equation}\label{eq:a_popov_scheme2}
\arraycolsep=0.3em
\left\{\begin{array}{lcl}
y_k &:= & \left(\beta_k - \frac{ \beta_{k-1}\eta_k}{\eta_{k-1}}\right)x_0 + \left(1 - \beta_k + \frac{\eta_k}{\eta_{k-1}}\right)x_k - \frac{(1-\beta_{k-1})\eta_k}{\eta_{k-1}}x_{k-1}, \vspace{1ex}\\
x_{k+1} &:= & \beta_kx_0 + (1-\beta_k)x_k - \eta_kG(y_k).
\end{array}\right.
\end{equation}
Clearly, if $\beta_k = 0$ and $\eta_k = \eta > 0$, then this scheme reduces to  $x_{k+1} = x_k - \eta G(2x_k - x_{k-1})$, the reflected gradient method proposed  in \cite{malitsky2015projected}.
Hence, our scheme \eqref{eq:a_popov_scheme2} can be considered as an \textbf{accelerated variant} of the \textit{reflected gradient method}.
Furthermore, without the accelerated step, as discussed in \cite{hsieh2019convergence},  Popov's method can be referred to as a past-extra-gradient method, and it is equivalent to a so-called \textit{optimistic gradient} method used in online learning  \cite{hsieh2019convergence}.

%%% 3.2. One-Iteration Analysis: Key Estimate
\beforesubsec
\subsection{\textbf{One-Iteration Analysis: Key Estimate}}
\aftersubsec
Let us first define the following potential (or Lyapunov) function:
\begin{equation}\label{eq:new_lyapunov_func}
\Vc_k := a_k\norms{G(x_k)}^2 + b_k\iprods{G(x_k), x_k - x_0} + c_kL^2\norms{x_k - y_{k-1}}^2,
\end{equation}
where $a_k$, $b_k$ and  $c_k$ are three given positive parameters.
This function is slightly different from the one in \cite{diakonikolas2020halpern} due to the last term.
Then, we have the following result.

%%% Lemma 1.
\begin{lemma}\label{le:A_Popov_method}
Let $\sets{(x_k, y_k)}$ be generated by \eqref{eq:a_popov_scheme}, where $\beta_k \in (0, 1)$, $b_{k+1} := \frac{b_k}{1-\beta_k}$,  $a_k = \frac{b_k\eta_k}{2\beta_k}$, 
\begin{equation}\label{eq:A_Popov_cond}
0 < \eta_{k+1} < \frac{\left( 1-\beta_k^2 - M\eta_k^2 \right)\beta_{k+1} \eta_k }{\left( 1 - M\eta_k^2\right)(1-\beta_k)\beta_k }, \quad M\eta_k^2 + \beta_k^2 < 1, \quad \text{and} \quad \eta_{k+1} \leq \frac{\beta_{k+1}(1-\beta_k)}{M\beta_k\eta_k},
\end{equation}
with $M := 2L^2(1 + \theta)$ for any $\theta > 0$.
Then, $\Vc_k$ defined by \eqref{eq:new_lyapunov_func} satistifes
\begin{equation}\label{eq:PP_key_est1}
\Vc_k - \Vc_{k+1} \geq L^2\left( \frac{\theta a_k}{ M\eta_k^2} - c_{k+1} \right) \norms{x_{k+1} - y_k}^2  + L^2(c_k - a_k)\norms{x_k - y_{k-1}}^2.
\end{equation}
\end{lemma}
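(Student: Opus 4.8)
The plan is to establish \eqref{eq:PP_key_est1} by computing the one-step difference $\Vc_k - \Vc_{k+1}$ directly from \eqref{eq:new_lyapunov_func} and lower-bounding it block by block using monotonicity and $L$-Lipschitz continuity of $G$, before invoking the parameter relations to collapse the estimate into the claimed residual form. First I would record the elementary identities generated by \eqref{eq:a_popov_scheme}. Subtracting its two lines yields the crucial \emph{past-gradient} identity $x_{k+1} - y_k = \eta_k(G(y_{k-1}) - G(y_k))$, so that $\norms{x_{k+1}-y_k}^2 = \eta_k^2\norms{G(y_k) - G(y_{k-1})}^2$; likewise the $x$-update gives $x_{k+1} - x_k = -\beta_k(x_k - x_0) - \eta_k G(y_k)$ and $x_{k+1} - x_0 = (1-\beta_k)(x_k - x_0) - \eta_k G(y_k)$. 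These identities let me re-express the anchored inner product and the gradient-norm terms of $\Vc_{k+1}$ through quantities available at step $k$.

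Next I would treat the inner-product block $b_k\iprods{G(x_k), x_k - x_0} - b_{k+1}\iprods{G(x_{k+1}), x_{k+1}-x_0}$. Substituting the expression for $x_{k+1}-x_0$ and using $b_{k+1}(1-\beta_k) = b_k$ makes the $(1-\beta_k)(x_k-x_0)$ part align with the step-$k$ term, leaving $b_k\iprods{G(x_k) - G(x_{k+1}), x_k - x_0}$ plus an inner product of $G(x_{k+1})$ with $G(y_k)$. Rewriting $x_k - x_0$ via the $x$-update produces the sign-definite term $\tfrac{b_k}{\beta_k}\iprods{G(x_{k+1}) - G(x_k), x_{k+1} - x_k}\ge 0$ by monotonicity, together with further inner products against $G(y_k)$. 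I would then bring in the second monotonicity inequality $\iprods{G(x_{k+1}) - G(y_k), x_{k+1} - y_k}\ge 0$, which through the past-gradient identity converts the indefinite $G(y_k)$ cross terms into sign-definite quantities. I expect the choice $a_k = \tfrac{b_k\eta_k}{2\beta_k}$ to be exactly what balances the resulting $\norms{G(x_{k+1})}^2$ contributions against the leading term $-a_{k+1}\norms{G(x_{k+1})}^2$ from the gradient-norm block.

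The heart of the argument, and the step I expect to be the main obstacle, is controlling the stale-gradient error. Unlike the anchored extra-gradient scheme, here $y_k$ is formed from the past value $G(y_{k-1})$, so after the monotonicity reductions one is left with a term proportional to $\norms{G(y_k) - G(y_{k-1})}^2 = \tfrac{1}{\eta_k^2}\norms{x_{k+1} - y_k}^2$ that cannot be absorbed at the current index alone. I would split this error with a Young-type inequality whose weight is governed by $\theta$: one piece is charged to $\norms{x_{k+1}-y_k}^2$ and the other is bounded through the Lipschitz estimate, which is precisely how $M = 2L^2(1+\theta)$ enters. This is the role of the tracking term $c_kL^2\norms{x_k - y_{k-1}}^2$ in $\Vc_k$: it carries the lagged error forward so that the residual $L^2(c_k - a_k)\norms{x_k - y_{k-1}}^2$ survives on the right-hand side of \eqref{eq:PP_key_est1}.

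Finally I would collect all coefficients and verify, using the three conditions in \eqref{eq:A_Popov_cond}, that every leftover is either non-negative or has been matched. The bound $M\eta_k^2 + \beta_k^2 < 1$ guarantees positivity of the coefficient multiplying $\norms{G(x_{k+1})}^2$ before it is cancelled, the stepsize upper bounds on $\eta_{k+1}$ keep the monotonicity-to-residual conversions from overshooting, and the telescoping relations $b_{k+1} = \tfrac{b_k}{1-\beta_k}$ and $a_k = \tfrac{b_k\eta_k}{2\beta_k}$ align the $\norms{G(x_k)}^2$ and inner-product contributions across consecutive indices. What should remain is exactly the coefficient $\tfrac{\theta a_k}{M\eta_k^2} - c_{k+1}$ on $\norms{x_{k+1}-y_k}^2$ and $c_k - a_k$ on $\norms{x_k - y_{k-1}}^2$. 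The genuinely delicate bookkeeping will be the simultaneous consistency of the Young weight $\theta$, the definition of $M$, and the target coefficient $\tfrac{\theta a_k}{M\eta_k^2}$, which must all be calibrated so that no spurious positive term is left uncontrolled.
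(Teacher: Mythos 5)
Your first two blocks are exactly the paper's: the identities you extract from \eqref{eq:a_popov_scheme}, and the reduction of the anchor block to the sign-definite term $\tfrac{b_k}{\beta_k}\iprods{G(x_{k+1})-G(x_k),x_{k+1}-x_k}\ge 0$ plus the two cross terms $b_{k+1}\eta_k\iprods{G(x_{k+1}),G(y_k)}+\tfrac{b_k\eta_k}{\beta_k}\iprods{G(x_{k+1})-G(x_k),G(y_k)}$, coincide with the paper's derivation, and your account of how $M=2L^2(1+\theta)$ arises (a factor $2$ from splitting $G(y_k)-G(y_{k-1})$ through $G(x_k)$, a factor $1+\theta$ from adding $\theta L^2\norms{x_{k+1}-y_k}^2$ to the Lipschitz bound) is also correct. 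The genuine gap is at the step you yourself call the heart of the argument. The claim that the second monotonicity inequality $\iprods{G(x_{k+1})-G(y_k),x_{k+1}-y_k}\ge 0$ ``converts the indefinite $G(y_k)$ cross terms into sign-definite quantities'' is false: through the past-gradient identity it reads $\iprods{G(x_{k+1})-G(y_k),G(y_{k-1})-G(y_k)}\ge 0$, i.e.\ it yields the \emph{upper} bound $\iprods{G(x_{k+1}),G(y_k)}\le\norms{G(y_k)}^2+\iprods{G(x_{k+1})-G(y_k),G(y_{k-1})}$, which (i) points the wrong way for the term $+b_{k+1}\eta_k\iprods{G(x_{k+1}),G(y_k)}$ that must be bounded from below at that stage, and (ii) drags in lagged cross terms against $G(y_{k-1})$ that nothing in $\Vc_k$ controls (the term $c_kL^2\norms{x_k-y_{k-1}}^2$ controls the difference $G(x_k)-G(y_{k-1})$ via Lipschitz continuity, not $G(y_{k-1})$ itself). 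Your plan also never identifies the source of a \emph{positive} multiple of $\norms{G(x_{k+1})}^2$ needed to dominate $-a_{k+1}\norms{G(x_{k+1})}^2$; monotonicity cannot produce one, only the Lipschitz estimate does.

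The paper's mechanism, which your outline is missing, is purely algebraic: rearrange the Lipschitz estimate (after the splitting and the $\theta$-augmentation) into a quadratic inequality in $G(x_{k+1})$, $G(y_k)$, $G(x_k)$ together with $+\theta L^2\norms{x_{k+1}-y_k}^2 - ML^2\eta_k^2\norms{x_k-y_{k-1}}^2$, multiply it by $\tfrac{a_k}{M\eta_k^2}>0$, and add it to the anchor-block estimate. The choice $a_k=\tfrac{b_k\eta_k}{2\beta_k}$ is what cancels the $\iprods{G(x_{k+1})-G(x_k),G(y_k)}$ cross term and the $\norms{G(x_k)}^2$ terms (not, as you write, what balances $-a_{k+1}\norms{G(x_{k+1})}^2$; that is the job of the third condition in \eqref{eq:A_Popov_cond}). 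One is then left with
\begin{equation*}
S_k^{11}\norms{G(x_{k+1})}^2+S_k^{22}\norms{G(y_k)}^2-2S_k^{12}\iprods{G(x_{k+1}),G(y_k)},
\qquad
S_k^{11}:=\tfrac{a_k}{M\eta_k^2}-a_{k+1},\quad
S_k^{22}:=\tfrac{a_k(1-M\eta_k^2)}{M\eta_k^2},\quad
S_k^{12}:=S_k^{22}-\tfrac{b_{k+1}\eta_k}{2},
\end{equation*}
plus exactly the two residual terms of \eqref{eq:PP_key_est1}; the three conditions of \eqref{eq:A_Popov_cond} are precisely $S_k^{11}\ge 0$, $S_k^{22}\ge 0$, and $\sqrt{S_k^{11}S_k^{22}}\ge S_k^{12}$, under which this quadratic form is a complete square and can be discarded. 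Without this completion of squares (or an equivalent positive-semidefiniteness argument), the conditions \eqref{eq:A_Popov_cond} play no role in your outline and the indefinite cross terms are never dominated, so the proof as planned stalls exactly where you predicted trouble.
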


%%% Proof of Lemma 1.
\begin{proof}
First, from \eqref{eq:a_popov_scheme}, we can easily show that
\begin{equation}\label{eq:a_PP_proof0}
\arraycolsep=0.3em
\left\{\begin{array}{lcl}
x_{k+1} - x_k  & = & \beta_k(x_0 - x_k) - \eta_kG(y_k), \vspace{1ex}\\
x_{k+1} - x_k  & = & \frac{\beta_k}{1 - \beta_k}(x_0 - x_{k+1}) - \frac{\eta_k}{1-\beta_k}G(y_k), \vspace{1ex}\\
x_{k+1} - y_k  & = & -\eta_k(G(y_k) - G(y_{k-1})).
\end{array}\right.
\end{equation}
Since $G$ is monotone, we have $\iprods{G(x_{k+1}) - G(x_k), x_{k+1} - x_k}  \geq 0$.
This inequality leads to 
\begin{equation*}
\iprods{G(x_{k+1}), x_{k+1} - x_k} \geq \iprods{G(x_k), x_{k+1} - x_k}.
\end{equation*}
Using the first two lines of \eqref{eq:a_PP_proof0} into this inequality, we obtain
\begin{equation*}
\arraycolsep=0.3em
\begin{array}{lcl}
\frac{\beta_k}{1-\beta_k}\iprods{G(x_{k+1}), x_0 - x_{k+1}} &\geq & \beta_k\iprods{G(x_k), x_0 - x_k} - \eta_k\iprods{G(x_k), G(y_k)} + \frac{\eta_k}{1-\beta_k}\iprods{G(x_{k+1}), G(y_k)}.
\end{array}
\end{equation*}
Multiplying this inequality by $\frac{b_k}{\beta_k}$ and noting that $b_{k+1} = \frac{b_k}{1-\beta_k}$, we obtain
\begin{equation}\label{eq:Popov_scheme_01_proof1}
\arraycolsep=0.3em
\begin{array}{lcl}
\Tc_{[1]} &:= & b_k\iprods{G(x_k), x_k - x_0} -  b_{k+1}\iprods{G(x_{k+1}), x_{k+1} - x_0} \vspace{1ex}\\
&\geq &  \frac{b_k\eta_k}{\beta_k(1-\beta_k)}\iprods{G(x_{k+1}), G(y_k)} -  \frac{b_k\eta_k}{\beta_k}\iprods{G(x_k), G(y_k)}\vspace{1ex}\\
&= & b_{k+1}\eta_k\iprods{G(x_{k+1}), G(y_k)} +  \frac{b_k\eta_k}{\beta_k}\iprods{G(x_{k+1}) - G(x_k), G(y_k)}.
\end{array}
\end{equation}
Now, using the definition of $\Vc_k$ from \eqref{eq:new_lyapunov_func}, we have
\begin{equation}\label{eq:Popov_scheme_01_proof2}
\hspace{-2ex}
\arraycolsep=0.3em
\begin{array}{lcl}
\Vc_k - \Vc_{k+1} &= & a_k\norms{G(x_k)}^2  - a_{k+1}\norms{G(x_{k+1})}^2 + b_k\iprods{G(x_k), x_k - x_0} \vspace{1ex}\\
&& - {~}  b_{k+1}\iprods{G(x_{k+1}), x_{k+1} - x_0}  + c_kL^2\norms{x_k - y_{k-1}}^2 - c_{k+1}L^2\norms{x_{k+1} - y_k}^2 \vspace{0.5ex}\\
&\overset{\tiny\eqref{eq:Popov_scheme_01_proof1}}{\geq} & a_k \norms{G(x_k)}^2 - a_{k+1} \norms{G(x_{k+1})}^2  +  \frac{b_k\eta_k}{\beta_k}\iprods{G(x_{k+1}) - G(x_k), G(y_k)} \vspace{1ex}\\
&& + {~} b_{k+1}\eta_k\iprods{G(x_{k+1}), G(y_k)} +  c_kL^2\norms{x_k - y_{k-1}}^2 - c_{k+1}L^2\norms{x_{k+1} - y_k}^2.
\end{array}
\hspace{-4ex}
\end{equation}
Next, we upper bound $\norms{ G(y_k) - G(y_{k-1}) }^2$ as follows:
\begin{equation*}
\arraycolsep=0.2em
\begin{array}{lcl}
\norms{ G(y_k) - G(y_{k-1}) }^2 & = & \norms{G(y_k) - G(x_k) + G(x_k) - G(y_{k-1})}^2 \vspace{1ex}\\
&\leq & 2\norms{G(x_k) - G(y_k)}^2 + 2\norms{G(x_k) - G(y_{k-1})}^2 \vspace{1ex}\\
&\leq & 2\norms{G(x_k)}^2 - 4\iprods{G(x_k), G(y_k)} + 2\norms{G(y_k)}^2 + 2L^2\norms{x_k - y_{k-1}}^2.
\end{array}
\end{equation*}
Using the Lipschitz continuity of $G$ and  the last line of \eqref{eq:a_PP_proof0}, we have $\norms{G(x_{k+1}) - G(y_k)}^2 \leq L^2\norms{x_{k+1} - y_k}^2 = L^2\eta_k^2\norms{G(y_k) - G(y_{k-1})}^2$.
Adding $\theta L^2\norms{x_{k+1} - y_k}^2$ to this inequality, for some $\theta > 0$, and then combining the result and the last estimate, we get
\begin{equation*}
\arraycolsep=0.2em
\begin{array}{lcl}
\norms{G(x_{k+1}) - G(y_k)}^2 + \theta L^2\norms{x_{k+1} - y_k}^2 &= & \norms{G(x_{k+1})}^2 + \norms{G(y_k)}^2 - 2\iprods{G(x_{k+1}), G(y_k)}  \vspace{1ex}\\
&& + {~} \theta L^2\norms{x_{k+1} - y_k}^2 \vspace{1ex}\\
&\leq & \eta_k^2L^2(1 + \theta)\norms{G(y_k) - G(y_{k-1})}^2 \vspace{1ex}\\
&\leq &  M \eta_k^2 \Big[ \norms{G(x_k)}^2 - 2\iprods{G(x_k), G(y_k)} + \norms{G(y_k)}^2 \Big] \vspace{1ex}\\
&&  + {~}  ML^2\eta_k^2\norms{x_k - y_{k-1}}^2,
\end{array}
\end{equation*}
where $M := 2L^2(1 + \theta)$.
Rearranging this inequality, we obtain
\begin{equation}\label{eq:Popov_scheme_01_proof3}
\hspace{-0.0ex}
\arraycolsep=0.2em
\begin{array}{ll}
\norms{G(x_{k+1})}^2 & + {~} \left( 1 - M\eta_k^2\right) \norms{G(y_k)}^2 - M\eta_k^2\norms{G(x_k)}^2 - 2\left( 1 - M\eta_k^2 \right) \iprods{G(x_{k+1}), G(y_k)} \vspace{1ex}\\
& - {~}  2M\eta_k^2\iprods{G(x_{k+1}) - G(x_k), G(y_k)}  + \theta L^2\norms{x_{k+1} - y_k}^2 \vspace{1ex}\\
&  - {~} L^2M\eta_k^2\norms{x_k - y_{k-1}}^2 \leq 0.
\end{array}
\hspace{-2.0ex}
\end{equation}
Multiplying \eqref{eq:Popov_scheme_01_proof3} by $\frac{a_k}{ M\eta_k^2}$ and adding the result to \eqref{eq:a_PP_proof0}, we obtain
\begin{equation*}
\arraycolsep=0.2em
\begin{array}{lcl}
\Vc_k - \Vc_{k+1} &\geq & \left(\frac{a_k}{ M\eta_k^2} - a_{k+1}\right)\norms{G(x_{k+1})}^2 + \frac{a_k\left(1 - M\eta_k^2\right) }{ M\eta_k^2}  \norms{G(y_k)}^2 \vspace{1ex}\\
&& - {~}  2\left( \frac{a_k\left( 1 - M\eta_k^2\right)}{M\eta_k^2} - \frac{b_{k+1}\eta_k}{2} \right)\iprods{G(x_{k+1}), G(y_k)} \vspace{1ex}\\
&& + {~}  \left( \frac{b_k\eta_k}{\beta_k} - 2a_k  \right) \iprods{G(x_{k+1}) - G(x_{k}), G(y_k)}  +  L^2\left( \frac{\theta a_k}{ M\eta_k^2} - c_{k+1}\right) \norms{x_{k+1} - y_k}^2 \vspace{1ex}\\
&& + {~} L^2(c_k - a_k)\norms{x_k - y_{k-1}}^2.
\end{array}
\end{equation*}
Now, let us choose $a_k = \frac{b_k\eta_k}{2\beta_k}$ and define the following three quantities:
\begin{equation*}
\arraycolsep=0.3em
\left\{\begin{array}{lclcl}
S^{11}_k & := & \frac{a_k}{ M\eta_k^2} - a_{k+1}  & = & \frac{b_k}{2M\beta_k\eta_k}  - \frac{b_k\eta_{k+1}}{2(1-\beta_k)\beta_{k+1}}, \vspace{1ex}\\
S^{22}_k & := &  \frac{a_k \left( 1 - M\eta_k^2\right) }{ M\eta_k^2}  & = &  \frac{b_k(1 - M\eta_k^2) }{2M\eta_k \beta_k}, \vspace{1ex}\\
S^{12}_k & := & \frac{a_k\left( 1 - M\eta_k^2\right)}{M\eta_k^2} - \frac{b_{k+1}\eta_k}{2} &= & \frac{(1 - \beta_k - M\eta_k^2)b_k}{2M(1-\beta_k)\beta_k\eta_k}.
\end{array}\right.
\end{equation*}
Then, the above inequality can be written as
\begin{equation*}
\begin{array}{lcl}
\Vc_k - \Vc_{k+1} &\geq & S_k^{11}\norms{G(x_{k+1})}^2 + S_k^{22} \norms{G(y_k)}^2  - 2S_k^{12} \iprods{G(x_{k+1}), G(y_k)} \vspace{1ex}\\
&& + {~} L^2\left( \frac{\theta a_k}{ M\eta_k^2} - c_{k+1}\right) \norms{x_{k+1} - y_k}^2 + L^2(c_k - a_k)\norms{x_k - y_{k-1}}^2.
\end{array}
\end{equation*}
Clearly, if we impose three conditions $S_k^{11} \geq 0$, $S_k^{22} \geq 0$, and  $\sqrt{S_k^{11}S_k^{22}} \geq S_k^{12}$, then 
\begin{equation*}
\begin{array}{lcl}
\Vc_k - \Vc_{k+1} &\geq & \norms{ \sqrt{S_k^{11}} G(x_{k+1}) - \sqrt{S_k^{22}} G(y_k)}^2   + L^2\left( \frac{\theta a_k}{ M\eta_k^2} - c_{k+1} \right) \norms{x_{k+1} - y_k}^2\vspace{1ex}\\
&& + {~}  L^2(c_k - a_k)\norms{x_k - y_{k-1}}^2 \vspace{1ex}\\
&\geq &   L^2\left( \frac{\theta a_k}{ M\eta_k^2} - c_{k+1} \right) \norms{x_{k+1} - y_k}^2 +  L^2(c_k - a_k)\norms{x_k - y_{k-1}}^2,
\end{array}
\end{equation*}
which proves \eqref{eq:PP_key_est1}.

Finally, we can easily show that the condition $\sqrt{S_k^{11}S_k^{22}} \geq S_k^{12}$ holds if 
\begin{equation*}
 \frac{(1 - M \eta_k^2) }{M \eta_k} \cdot \left( \frac{1}{M\eta_k}  - \frac{\beta_k\eta_{k+1}}{(1-\beta_k)\beta_{k+1}}\right)
 \geq \left( \frac{ 1-\beta_k - M\eta_k^2 }{M(1-\beta_k)\eta_k} \right)^2.
\end{equation*}
This condition is equivalent to $0 < \eta_{k+1} \leq \frac{\beta_{k+1}(1 - M\eta_k^2 - \beta_k^2)}{\beta_k(1-\beta_k)(1 - M\eta_k^2)}\cdot \eta_k$ provided that $M\eta_k^2 + \beta_k^2 < 1$.
Clearly, this one is exactly the first condition of \eqref{eq:A_Popov_cond}.
The condition $S_k^{11} \geq 0$ is equivalent to $\eta_{k+1} \leq \frac{\beta_{k+1}(1-\beta_k)}{M\beta_k\eta_k}$,  which is the third condition of \eqref{eq:A_Popov_cond}.
The condition $S_k^{22} \geq 0$ is equivalent to $M\eta_k^2 \leq 1$.
Combining this and $M\eta_k^2 + \beta_k^2\leq 1$, we get  the second condition of \eqref{eq:A_Popov_cond}.
\Eproof
\end{proof}
%%% End of the proof.

%%% 3.3. Parameter Updates
\beforesubsec
\subsection{\textbf{Parameter Updates}}\label{subsec:s3_para_update}
\aftersubsec
From the first condition of \eqref{eq:A_Popov_cond},  let us update the parameters $\beta_k$ and $\eta_k$ of \eqref{eq:a_popov_scheme} as follows:
\begin{equation}\label{eq:Popov_scheme_01_para_update}
\beta_k := \frac{1}{k+2} \qquad\text{and}\qquad \eta_{k+1} := \frac{\left( 1-\beta_k^2 - M\eta_k^2 \right)\beta_{k+1} \eta_k }{\left( 1 - M\eta_k^2\right)(1-\beta_k)\beta_k },
\end{equation}
where $M := 2L^2(1 + \theta)$ and $\eta_0$ is chosen such that $0 < \eta_0 < \frac{1}{\sqrt{2M}}$.

From \eqref{eq:Popov_scheme_01_para_update}, we can show that $\sets{\eta_k}$ is non-increasing and has positive limit as stated in the following lemma.
This lemma is proved in  \cite{yoon2021accelerated}, but we provide a slightly different proof here for completeness.

%%% Lemma 2.
\begin{lemma}\label{le:eta_limit}
Given a constant $M > 0$, let $\sets{\beta_k}$ and $\sets{\eta_k}$ be  respectively updated by
\begin{equation}\label{eq:eta_sequence}
\beta_k = \frac{1}{k+2} \qquad \text{and} \qquad \eta_{k+1} := \frac{\beta_{k+1}\big( 1-\beta_k^2 - M\eta_k^2 \big) }{\beta_k(1-\beta_k) ( 1 - M\eta_k^2 ) } \cdot \eta_k,
\end{equation}
where $0 < \eta_0 < \frac{1}{\sqrt{M}}$.
Then, $\sets{\eta_k}$  is nonincreasing, i.e., $\eta_{k+1} \leq \eta_k \leq \eta_0 < \frac{1}{\sqrt{M}}$.

If, additionally, $\eta_0 < \frac{1}{\sqrt{2M}}$, then $\eta_{*} := \lim_{k\to\infty} \eta_k$ exists and $\eta_{*} > \underline{\eta} :=   \frac{\eta_0(1 - 2M\eta_0^2)}{1-M\eta_0^2} > 0$.
\end{lemma}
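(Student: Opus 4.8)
The plan is to reduce both assertions to a careful analysis of the one-step ratio $\eta_{k+1}/\eta_k$. First I would simplify the coefficient in \eqref{eq:eta_sequence} using $\beta_k = \frac{1}{k+2}$. A direct computation gives $\frac{\beta_{k+1}}{\beta_k(1-\beta_k)} = \frac{(k+2)^2}{(k+1)(k+3)} = \frac{1}{1-\beta_k^2}$, so the recursion collapses to the compact form
\[
\eta_{k+1} = \frac{1 - \beta_k^2 - M\eta_k^2}{(1-\beta_k^2)(1 - M\eta_k^2)}\,\eta_k.
\]
This single identity drives everything that follows.

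For monotonicity and boundedness I would argue by induction, carrying the invariant $M\eta_k^2 + \beta_k^2 < 1$ (equivalently $0 < 1 - \beta_k^2 - M\eta_k^2$). Granting it, the denominator above is positive, and the inequality $\frac{1-\beta_k^2-M\eta_k^2}{(1-\beta_k^2)(1-M\eta_k^2)} \le 1$ is equivalent to $0 \le \beta_k^2 M\eta_k^2$, which always holds; hence $0 < \eta_{k+1} \le \eta_k$. The invariant then propagates automatically, since $\eta_{k+1} \le \eta_k$ gives $M\eta_{k+1}^2 \le M\eta_k^2$ and $\beta_{k+1} < \beta_k$ gives $\beta_{k+1}^2 < \beta_k^2$, so $M\eta_{k+1}^2 + \beta_{k+1}^2 < M\eta_k^2 + \beta_k^2 < 1$. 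Only the base case $M\eta_0^2 + \beta_0^2 < 1$ remains; since $\beta_0 = \frac12$ it reads $M\eta_0^2 < \frac34$, which in particular holds in the regime $\eta_0 < \frac{1}{\sqrt{2M}}$ of the second assertion. This yields $\eta_{k+1} \le \eta_k \le \eta_0 < \frac{1}{\sqrt M}$.

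For the limit, nonincreasing and bounded below by $0$ gives existence of $\eta_* := \lim_k \eta_k \ge 0$ by monotone convergence; the substance is the strict lower bound. I would telescope the total decrease. From the displayed identity, $\eta_k - \eta_{k+1} = \eta_k\cdot\frac{\beta_k^2 M\eta_k^2}{(1-\beta_k^2)(1-M\eta_k^2)}$, and replacing $\eta_k$ by $\eta_0$, $M\eta_k^2$ by $M\eta_0^2$, and $\frac{1}{1-M\eta_k^2}$ by $\frac{1}{1-M\eta_0^2}$ (all by monotonicity), together with the exact simplification $\frac{\beta_k^2}{1-\beta_k^2} = \frac{1}{(k+1)(k+3)}$, I obtain $\eta_k - \eta_{k+1} \le \frac{M\eta_0^3}{1-M\eta_0^2}\cdot\frac{1}{(k+1)(k+3)}$. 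Summing over $k \ge 0$ and using the partial-fraction evaluation $\sum_{k\ge 0}\frac{1}{(k+1)(k+3)} = \frac34$ gives $\eta_0 - \eta_* \le \frac{3M\eta_0^3}{4(1-M\eta_0^2)} < \frac{M\eta_0^3}{1-M\eta_0^2}$, whence $\eta_* > \eta_0 - \frac{M\eta_0^3}{1-M\eta_0^2} = \frac{\eta_0(1-2M\eta_0^2)}{1-M\eta_0^2} = \underline{\eta}$. The hypothesis $\eta_0 < \frac{1}{\sqrt{2M}}$, i.e. $M\eta_0^2 < \frac12$, is precisely what makes $\underline{\eta} > 0$, and the strict gap between $\frac34$ and $1$ delivers the strict inequality.

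The main obstacle is entirely algebraic rather than conceptual, and sits in two places. First, spotting the identities $\frac{\beta_{k+1}}{\beta_k(1-\beta_k)} = \frac{1}{1-\beta_k^2}$ and $\frac{\beta_k^2}{1-\beta_k^2} = \frac{1}{(k+1)(k+3)}$, which are what render both the one-step ratio and the telescoping sum tractable. Second, pinning down the exact constant $\underline{\eta}$: one must select a sufficiently sharp summable majorant for $\frac{1}{(k+1)(k+3)}$ and verify that the positivity invariant $M\eta_k^2 + \beta_k^2 < 1$ is genuinely preserved. I note that the bare hypothesis $\eta_0 < \frac{1}{\sqrt M}$ of the first assertion is not by itself enough to keep the numerator $1 - \beta_k^2 - M\eta_k^2$ positive at the very first step, so the argument implicitly uses the slightly stronger bound $M\eta_0^2 < \frac34$ (automatic under $\eta_0 < \frac{1}{\sqrt{2M}}$).
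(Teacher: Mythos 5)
Your proof is correct and follows essentially the same route as the paper's: reduce the recursion to an explicit ratio, prove monotonicity by induction, and lower-bound $\eta_*$ by telescoping a summable bound on the per-step decrease. Two places where your execution is sharper are worth recording. First, you work with the exact identity $\eta_k - \eta_{k+1} = \eta_k\,\frac{\beta_k^2 M\eta_k^2}{(1-\beta_k^2)(1-M\eta_k^2)}$ and the exact sum $\sum_{k\ge 0}\frac{1}{(k+1)(k+3)} = \frac{3}{4}$, whereas the paper majorizes $\frac{1}{(k+1)(k+3)}$ by $\frac{1}{(k+1)(k+2)}$, whose sum is only bounded by $1$; your factor $\frac{3}{4}<1$ is precisely what yields the \emph{strict} inequality $\eta_* > \underline{\eta}$ claimed in the statement, which the paper's final estimate really only supports as $\eta_* \ge \underline{\eta}$. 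Second, your explicit invariant $M\eta_k^2 + \beta_k^2 < 1$, together with the remark that the first assertion implicitly needs $M\eta_0^2 < \frac{3}{4}$ at the base case, flags a genuine looseness shared by the lemma and the paper's induction: under $\eta_0 < \frac{1}{\sqrt{M}}$ alone, $1-\beta_0^2-M\eta_0^2$ can be negative and $\eta_1$ can fail to be positive, so ``nonincreasing'' holds only vacuously; since every application of the lemma in the paper takes $\eta_0 \le \frac{1}{\sqrt{3M}}$ or smaller, nothing downstream is affected, but your version of the hypothesis bookkeeping is the more careful one.
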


%If we define $\bar{\eta}_k := \sqrt{M}\eta_k$, then \eqref{eq:eta_sequence} becomes $\bar{\eta}_{k+1} =  \frac{\beta_{k+1}\big( 1-\beta_k^2 - \bar{\eta}_k^2 \big) }{\beta_k(1-\beta_k) ( 1 - \bar{\eta}_k^2 ) } \cdot \bar{\eta}_k$, provided that $0 < \bar{\eta}_0 < \frac{1}{\sqrt{2}}$.
%If we take $\bar{\eta} := 0.5$.

%%% Proof of Lemma 2.
\begin{proof}
Substituting $\beta_k := \frac{1}{k+2}$ into \eqref{eq:eta_sequence}, we have
\begin{equation*}
\eta_{k+1} = \frac{(k+2)^2}{(k+1)(k+3)}\left( 1 - \frac{1}{S_k(k+2)^2}\right) \cdot \eta_k, \quad \text{where}\quad S_k = 1 - M\eta_k^2.
\end{equation*}
Let us show that $\eta_{k+1} \leq \eta_k$ for all $k\geq 0$ by induction.
\begin{itemize}
\itemsep=-0.1em
\item For $k=0$, we have $\eta_1 = \frac{4}{3}\big( 1 - \frac{1}{4S_0} \big) \eta_0 = \frac{4}{3} \big( 1 - \frac{1}{4(1-M\eta_0^2)} \big) \eta_0$.
Clearly, for $0 < \eta_0 < \frac{1}{\sqrt{M}} $, we have $\eta_1 \leq \eta_0 < \frac{1}{\sqrt{M}}$.

\item Assume that $\eta_{k} \leq \eta_{k-1} \leq \eta_0 < 1$ for all $k \geq 1$.
Now, we show that $\eta_{k+1} \leq \eta_{k}$.
Using \eqref{eq:eta_sequence} and $\beta_k = \frac{1}{k+2}$, we have
\begin{equation*} 
\arraycolsep=0.3em
%\begin{array}{lcl}
\frac{\eta_{k+1}}{\eta_k} 
%=  \frac{\beta_{k+1}\big( 1-\beta_k^2 - M\eta_k^2 \big) }{\beta_k(1-\beta_k)( 1 - M\eta_k^2)} 
= \frac{(k+2)^2}{(k+1)(k+3)}\left( 1 - \frac{1}{S_k(k+2)^2}\right)  < \frac{(k+2)^2}{(k+1)(k+3)}\left( 1 - \frac{1}{(k+2)^2}\right) = 1,
%\end{array}
\end{equation*}
where in the first inequality, we use $0 < S_k = 1 - M\eta_k^2 < 1$.
\end{itemize}
Hence, by induction, we conclude that $\eta_{k+1} \leq \eta_k \leq \eta_0 < \frac{1}{\sqrt{M}}$ for all $k \geq 0$.

Since $S_k = 1 - M_0\eta_k^2 \geq S_0 := 1 - M_0\eta_0^2$ due to the nonincrease of $\eta_k$, we have 
\begin{equation*}
\arraycolsep=0.3em
\begin{array}{lcl}
\eta_{k+1} & \geq &  \frac{(k+2)^2}{(k+1)(k+3)}\left( 1 - \frac{1}{S_0(k+2)^2}\right) \cdot \eta_k  =  \left( 1 - \frac{(1-S_0)}{S_0(k+1)(k+3)}\right) \cdot \eta_k \vspace{1ex}\\
& \geq & \eta_k - \frac{(1-S_0)\eta_0}{S_0(k+1)(k+2)}.
\end{array}
\end{equation*}
By induction, we have $\eta_k \geq \eta_0 - \frac{(1-S_0)\eta_0}{S_0}\sum_{i=0}^{k-1}\frac{1}{(i+1)(i+2)} =  \eta_0 - \frac{(1-S_0)\eta_0}{S_0}\left(1 - \frac{1}{k+1}\right) \geq \frac{(2S_0 - 1)\eta_0}{S_0} =  \frac{\eta_0(1 - 2M\eta_0^2)}{1-M\eta_0^2} := \underline{\eta} > 0$ provided that $0 < \eta_0 < \frac{1}{\sqrt{2M}}$.
Hence, we conclude that $0 < \underline{\eta} \leq \eta_{k+1} \leq \eta_{k} \leq \cdots \leq \eta_0$ for all $k\geq 0$.
The limit $\lim_{k\to\infty}\eta_k = \eta_{*} \geq \underline{\eta} > 0$ exists.
\Eproof
\end{proof}
%%% End of the proof.

In fact, if we scale $\hat{\eta}_k := \sqrt{M}\eta_k$, then the update \eqref{eq:eta_sequence} can be written in terms of $\hat{\eta}_k$ as
\begin{equation*}
\hat{\eta}_{k+1} := \frac{\beta_{k+1}\big( 1-\beta_k^2 - \hat{\eta}_k^2 \big) }{\beta_k(1-\beta_k) ( 1 - \hat{\eta}_k^2 ) } \cdot \hat{\eta}_k,
\end{equation*}
provided that $0 < \hat{\eta}_0 < \frac{1}{\sqrt{2}}$.
If we choose $\hat{\eta}_0 := 0.65 < \frac{1}{\sqrt{2}}$, then we obtain $\hat{\eta}_{*} \geq 0.4370579 > \hat{\underline{\eta}} =  0.1745$.
Note that since our lower bound $\underline{\eta}$ in Lemma \ref{le:eta_limit} is loosely estimated, it happens that even when $\underline{\eta} \leq 0$, the limit $\eta_{*}$ may still be positive.
Therefore, the condition $0 < \eta_0 < \frac{1}{\sqrt{2M}}$ can be relaxed in implementation.

%%% 3.4. Convergence Rate Guarantees.
\beforesubsec
\subsection{\textbf{Convergence Rate Guarantees}}\label{subsec:s3_convergence}
\aftersubsec
Finally, we can prove the following main theorem on the convergence of our scheme \eqref{eq:a_popov_scheme}.

%%% Theorem 1.
\begin{theorem}\label{th:a_PP_convergence1}
Assume that $G$ in \eqref{eq:ME} is maximally monotone and $L$-Lipschitz continuous.
Let $\sets{x_k}$ be generated by \eqref{eq:a_popov_scheme} to solve \eqref{eq:ME} as a special instance of \eqref{eq:MI_2o} using with $y_{-1} := x_0$.
Let $\beta_k$ and $\eta_k$ be updated by \eqref{eq:Popov_scheme_01_para_update} with $0 < \eta_0 \leq \frac{1}{2L\sqrt{3}}$ and $M := 4L^2$.
Then, we have $\eta_{*} := \lim_{k\to\infty}\eta_k > \underline{\eta} := \frac{\eta_0(1 - 2M\eta_0^2)}{1 - M\eta_0^2} >  0$.
Moreover, the following bound holds:
\begin{equation}\label{eq:a_PP_convergence1}
\norms{G(x_k)}^2 + 2L^2\norms{x_k - y_{k-1}}^2 \leq \frac{4}{\eta_{*}(k+1)(k+2)}\left[ \eta_0\norms{G(x_0)}^2 + \frac{1}{\eta_{*}}\norms{x_0 - x^{\star}}^2 \right].
\end{equation}
Consequently, if we define $C_{*} := \frac{4(\eta_0\eta_{*}L^2 + 1)}{\eta_{*}^2}$, then we have 
\begin{equation}\label{eq:a_PP_convergence1b}
\arraycolsep=0.3em
\left\{\begin{array}{lcl}
\norms{G(x_k)}^2 & \leq &  \dfrac{C_{*}\norms{x_0 - x^{\star}}^2}{(k+1)(k+2)},  \vspace{1.5ex}\\
\norms{x_{k} - y_{k-1}}^2 & \leq & \dfrac{C_{*}\norms{x_0 - x^{\star}}^2}{2L^2(k+1)(k+2)}, \vspace{1ex}\\
\norms{G(y_k) - G(y_{k-1})}^2 & \leq & \dfrac{C_{*}\norms{x_0 - x^{\star}}^2}{2L^2\eta_{*}^2(k+2)(k+3)}.
\end{array}\right.
\end{equation}
Hence, the convergence rate of both $\sets{\norms{G(x_k)}}$ and $\sets{\norms{x_{k+1} - y_{k}}}$ is $\BigO{\frac{1}{k}}$.
\end{theorem}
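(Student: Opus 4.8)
The plan is to run the telescoping machinery of Lemma~\ref{le:A_Popov_method} with the specialization $\theta = 1$ (so that $M = 2L^2(1+\theta) = 4L^2$, matching the theorem's constant) and the choice $c_k := a_k$, which annihilates the last term in the one-iteration estimate \eqref{eq:PP_key_est1}. First I would check that the update \eqref{eq:Popov_scheme_01_para_update} together with $0 < \eta_0 \leq \frac{1}{2L\sqrt{3}}$ satisfies every requirement of \eqref{eq:A_Popov_cond}. The first (main) constraint is met with equality by the defining relation for $\eta_{k+1}$, which makes the relevant quadratic form an exact square $\norms{\sqrt{S_k^{11}}G(x_{k+1}) - \sqrt{S_k^{22}}G(y_k)}^2 \geq 0$ and thus preserves the inequality; the cap $\eta_0 \leq \frac{1}{2L\sqrt{3}}$ forces $M\eta_k^2 \leq \frac13$ and $\beta_k^2 \leq \frac14$, giving the second condition $M\eta_k^2 + \beta_k^2 < 1$; and inserting the defining equality into the third condition reduces it, after clearing denominators, to $u(2 - 2\beta - u) \leq (1-\beta)^2$ with $u = M\eta_k^2$, $\beta = \beta_k$, which holds because the left side is a downward parabola in $u$ peaking exactly at $(1-\beta)^2$. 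With $c_k = a_k$ the surviving coefficient $\frac{a_k}{M\eta_k^2} - a_{k+1}$ is nonnegative precisely by this third condition, so \eqref{eq:PP_key_est1} gives $\Vc_{k+1} \leq \Vc_k$, hence $\Vc_k \leq \Vc_0$. The existence and positivity of $\eta_{*}$ follow from Lemma~\ref{le:eta_limit}, since $\frac{1}{2L\sqrt3} < \frac{1}{\sqrt{2M}} = \frac{1}{2\sqrt2\,L}$.

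Next I would pin down the two endpoints of this chain. Solving $b_{k+1} = \frac{b_k}{1-\beta_k}$ with $\beta_k = \frac{1}{k+2}$ and the normalization $b_0 = 1$ gives $b_k = k+1$, whence $a_k = \frac{b_k\eta_k}{2\beta_k} = \frac{(k+1)(k+2)\eta_k}{2}$. At the initial point, the convention $y_{-1} = x_0$ kills the last term of $\Vc_0$ and the cross term vanishes since $x_0 - x_0 = 0$, so $\Vc_0 = a_0\norms{G(x_0)}^2 = \eta_0\norms{G(x_0)}^2$. The delicate point is the lower bound on $\Vc_k$, whose cross term $b_k\iprods{G(x_k), x_k - x_0}$ is indefinite. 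I would split it as $\iprods{G(x_k), x_k - x^{\star}} + \iprods{G(x_k), x^{\star} - x_0}$, discard the first piece using monotonicity and $G(x^{\star}) = 0$ (so $\iprods{G(x_k), x_k - x^{\star}} \geq 0$), and apply Young's inequality with weight $a_k$ to the second piece to absorb half of $a_k\norms{G(x_k)}^2$. This yields
\begin{equation*}
\Vc_k \geq \frac{b_k\eta_k}{4\beta_k}\big[\norms{G(x_k)}^2 + 2L^2\norms{x_k - y_{k-1}}^2\big] - \frac{b_k\beta_k}{\eta_k}\norms{x_0 - x^{\star}}^2 .
\end{equation*}

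Finally I would assemble the bound. Chaining $\Vc_k \leq \Vc_0$ with the above, inserting $b_k = k+1$ and $\beta_k = \frac{1}{k+2}$, and using $\eta_{*} \leq \eta_k$ (monotonicity from Lemma~\ref{le:eta_limit}) both on the prefactor $\frac{(k+1)(k+2)\eta_k}{4}$ and on the residual $\frac{k+1}{(k+2)\eta_k} \leq \frac{1}{\eta_{*}}$ produces exactly \eqref{eq:a_PP_convergence1}. For the consequences \eqref{eq:a_PP_convergence1b} I would use $L$-Lipschitzness with $G(x^{\star}) = 0$ to get $\norms{G(x_0)} \leq L\norms{x_0 - x^{\star}}$, substitute, and read off $C_{*} = \frac{4(\eta_0\eta_{*}L^2 + 1)}{\eta_{*}^2}$; the first two estimates follow by nonnegativity of each summand on the left, and the third from the identity $x_{k+1} - y_k = -\eta_k(G(y_k) - G(y_{k-1}))$ in \eqref{eq:a_PP_proof0}, combined with the $\norms{x_{k+1} - y_k}^2$ bound read at index $k+1$ and $\eta_k \geq \eta_{*}$. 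Taking square roots gives the $\BigO{1/k}$ rates. I expect the main obstacle to be the indefinite cross-term handling: the Young weight must be tuned so that a clean positive multiple of $\norms{G(x_k)}^2 + 2L^2\norms{x_k - y_{k-1}}^2$ survives while the $\norms{x_0 - x^{\star}}^2$ remainder, after dividing through by the prefactor, still decays at the optimal $1/[(k+1)(k+2)]$ rate.
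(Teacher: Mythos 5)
Your proposal is correct and follows essentially the same route as the paper's proof: Lemma~\ref{le:A_Popov_method} with $\theta = 1$ and $c_k = a_k$, Lemma~\ref{le:eta_limit} for the monotonicity of $\sets{\eta_k}$ and $\eta_{*} > 0$, the same monotonicity-plus-Young lower bound on $\Vc_k$, telescoping $\Vc_k \leq \Vc_0 = \eta_0\norms{G(x_0)}^2$ (using $y_{-1} = x_0$), and the identical derivation of \eqref{eq:a_PP_convergence1} and \eqref{eq:a_PP_convergence1b}. The only cosmetic difference is your verification of the third condition in \eqref{eq:A_Popov_cond}: you show it holds automatically under the equality update via the parabola bound $u(2 - 2\beta - u) \leq (1-\beta)^2$ with $u = M\eta_k^2$, whereas the paper deduces it from $\eta_{k+1} \leq \eta_k$ and $\eta_0 \leq \frac{1}{\sqrt{3M}}$ --- both are valid and lead to the same final constants.
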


%%% Proof of Theorem 1.
\begin{proof}
First, let us verify the conditions in \eqref{eq:A_Popov_cond} of Lemma~\ref{le:A_Popov_method}.
Since $\eta_k$ is updated by \eqref{eq:Popov_scheme_01_para_update}, by Lemma~\ref{le:eta_limit}, $\sets{\eta_k}$ is nonincreasing and $\eta_{*} = \lim_{k\to\infty}\eta_k > 0$.
The first condition of \eqref{eq:A_Popov_cond} holds. 
Since $\eta_k \leq \eta_0 < \frac{1}{\sqrt{2M}}$, the second condition $M\eta_k^2 \leq 1 - \beta_k^2$ of \eqref{eq:A_Popov_cond} holds. 
By $\eta_{k+1} \leq \eta_k$, the last condition holds if $\eta_k^2 \leq \frac{\beta_{k+1}(1-\beta_k)}{M\beta_k} = \frac{(k+1)}{M(k+3)}$, which is equivalent to $\eta_k \leq \sqrt{\frac{k+1}{M(k+3)}}$.
Clearly, this holds if we choose $\eta_0 \leq \frac{1}{\sqrt{3M}}$.
In summary, all three conditions in \eqref{eq:A_Popov_cond} of Lemma~\ref{le:A_Popov_method} hold if we choose $0 < \eta_0 \leq \frac{1}{\sqrt{3M}}$.

Next, let us choose $c_k := a_k$.
Then, we have
\begin{equation*}
\arraycolsep=0.2em
\begin{array}{lcl}
Q_k & := & L^2\left(\frac{\theta a_k}{M\eta_k^2} -  c_{k+1}\right) =  L^2\left(\frac{\theta a_k}{M\eta_k^2} -  a_{k+1}\right) = \frac{L^2b_k}{2}\left(\frac{\theta }{M\beta_k\eta_k} - \frac{\eta_{k+1}}{\beta_{k+1}(1-\beta_k)}\right).
\end{array}
\end{equation*}
Assume that $Q_k \geq 0$, which is equivalent to $\eta_k\eta_{k+1} \leq \frac{\theta\beta_{k+1}(1-\beta_k)}{M\beta_k} = \frac{\theta (k+1)}{M(k+3)}$.
Since $\eta_{k+1} \leq \eta_k$, the last condition holds if $\eta_k \leq \sqrt{\frac{\theta(k+1)}{M(k+3)}}$ for all $k\geq 0$.
Clearly, if we choose $\eta_0 \leq \sqrt{\frac{\theta}{3M}}$, then  $Q_k \geq 0$.
Combining two conditions of $\eta_0$, we have 
\begin{equation*}
\eta_0 \leq \frac{1}{\sqrt{3M}} = \frac{1}{L\sqrt{6(1 + \theta)}} \qquad \text{and} \qquad \eta_0 \leq \sqrt{\frac{\theta}{3M}} = \frac{\sqrt{\theta}}{L\sqrt{6(1+\theta)}}.
\end{equation*}
Let us choose $\theta = 1$.
Then, these three conditions hold if $0 < \eta_0 \leq \frac{1}{2L\sqrt{3}}$ as desired.

Now, let $H_k :=  a_k\norms{G(x_k)}^2 + b_k\iprods{G(x_k), x_k - x_0}$. 
Then, similar to \cite{yoon2021accelerated}, we have
\begin{equation}\label{eq:L_func00}
\arraycolsep=0.2em
\begin{array}{lcl}
H_k %&= & a_k\norms{G(x_k)}^2 + b_k\iprods{G(x_k), x_k - x_0} \vspace{1ex}\\
& = & a_k\norms{G(x_k)}^2 + b_k\iprods{G(x_k) - G(x^{\star}), x_k - x^{\star}} + b_k\iprods{G(x_k), x^{\star} - x_0} \vspace{1ex}\\
%&\geq & a_k\norms{G(x_k)}^2 - b_k\norms{G(x_k)}\norms{x_0 - x^{\star}} \vspace{1ex}\\
& \geq & a_k\norms{G(x_k)}^2 - \frac{a_k}{2}\norms{G(x_k)}^2 - \frac{b_k^2}{2a_k}\norms{x_0 - x^{\star}}^2 \vspace{1ex}\\
&= & \frac{a_k}{2}\norms{G(x_k)}^2 - \frac{b_k^2}{2a_k}\norms{x_0 - x^{\star}}^2.
\end{array}
\end{equation}
By the choice of $\eta_0$, the condition of Lemma~\ref{le:eta_limit} holds.
Hence, we have $\eta_{*} = \lim_{k\to\infty}\eta_k \geq \underline{\eta} > 0$.
Moreover, since $b_{k+1} = \frac{b_k}{1-\beta_k}$, by induction, we get $b_k = b_0(k+1)$ for any $b_0 > 0$.
Consequently, $a_k = c_k =  \frac{b_k\eta_k}{2\beta_k} = \frac{b_0(k+1)(k+2)\eta_k}{2} \geq \frac{b_0\eta_{*}(k+1)(k+2)}{2}$.

Finally, from \eqref{eq:L_func00}, we have $\frac{a_k}{2}\norms{G(x_k)}^2 \leq H_k + \frac{b_k^2}{2a_k}\norms{x_0 - x^{\star}}^2$, leading to
\begin{equation*}
\frac{a_k}{2}\norms{G(x_k)}^2 + a_kL^2\norms{x_k - y_{k-1}}^2 \leq \Vc_k + \frac{b_k^2}{2a_k}\norms{x_0 - x^{\star}}^2.
\end{equation*}
Using $y_{-1} = x_0$, the last estimate, \eqref{eq:PP_key_est1}, and $a_k = c_k \geq \frac{b_0\eta_{*}(k+1)(k+2)}{2}$, by induction, we can show that
\begin{equation*}
\begin{array}{lcl}
\frac{b_0\eta_{*}(k+1)(k+2)}{4}\norms{G(x_k)}^2 + \frac{b_0\eta_{*}L^2(k+1)(k+2)}{2}\norms{x_k - y_{k-1}}^2 &\leq & \frac{a_k}{2}\norms{G(x_k)}^2 + a_kL^2\norms{x_k - y_{k-1}}^2 \vspace{1ex}\\
& \leq & \Vc_k + \frac{b_k^2}{2a_k}\norms{x_0 - x^{\star}}^2 \vspace{1ex}\\
&\leq & \Vc_0 + \frac{b_0^2(k+1)^2}{b_0(k+1)(k+2)\eta_k}\norms{x_0 - x^{\star}}^2 \vspace{1ex}\\
&\leq & b_0\eta_0\norms{G(x_0)}^2 + \frac{b_0}{\eta_{*}}\norms{x_0 - x^{\star}}^2,
\end{array}
\end{equation*}
which implies \eqref{eq:a_PP_convergence1}.
The estimates in \eqref{eq:a_PP_convergence1b} are direct consequences of \eqref{eq:a_PP_convergence1} and $\norms{G(x_0)}^2 = \norms{G(x_0) - G(x^{\star})}^2 \leq L^2\norms{x_0 - x^{\star}}^2$.
\Eproof
\end{proof}
%%% End of the proof.

Clearly, if we choose $\eta_0 := \frac{0.65}{\sqrt{M}} = \frac{0.65}{2L}$, then we get $\eta_{*} \geq \frac{0.43709}{\sqrt{M}} = \frac{0.43709}{2L}$.
In this case, the expression \eqref{eq:a_PP_convergence1} becomes
\begin{equation*} 
\norms{G(x_k)}^2 + 2L^2\norms{x_k - y_{k-1}}^2 \leq \frac{6\norms{G(x_0)}^2}{(k+1)(k+2)} + \frac{84L^2\norms{x_0 - x^{\star}}^2}{(k+1)(k+2)} \leq \frac{90L^2\norms{x_0 - x^{\star}}^2}{(k+1)(k+2)}.
\end{equation*}
Our bound appears to have a larger constant factor $90$ than $27$ of Corollary 2 in  \cite{yoon2021accelerated}.
This is due to some overestimation of our constants in the proof.

%%%%%%%%%%%%%%%%%%%%%%%%%%%%%%%%%%%%%%%%%%%%%%%%%% 
%%% 4. Extensions to Monotone Inclusions
%%%%%%%%%%%%%%%%%%%%%%%%%%%%%%%%%%%%%%%%%%%%%%%%%%  
\beforesec
\section{Halpern-Type Accelerated Methods for Monotone Inclusions}\label{subsec:EAG_extension}
\aftersec
In this section, we develop two different splitting schemes using Halpern-type fixed-point iterations to solve \eqref{eq:MI2} when $B$ is additionally single-valued and $L$-Lipschitz continuous. 

%%% 4.1. The forward-backward residual mapping.
\beforesubsec
\subsection{\textbf{The forward-backward residual mapping}}\label{subsec:s4_FB_res_mapping}
\aftersubsec
To characterize exact and approximate solutions of \eqref{eq:MI2} as well as convergence rates of our algorithms, we define the following forward-backward residual (FBR) mapping of \eqref{eq:MI2}:
\begin{equation}\label{eq:grad_mapping}
G_{\gamma}(x) := \gamma^{-1}\left(x - J_{\gamma A}(x - \gamma B(x)) \right), \qquad\text{where}\quad \gamma > 0.
\end{equation}
Note that $x^{\star}$ is a solution of \eqref{eq:MI2}, i.e., $x^{\star}\in\zer{A+B}$ if and only if $G_{\gamma}(x^{\star}) = 0$.
Using this mapping, we can transform \eqref{eq:MI2} into an equation $G_{\gamma}(x^{\star}) = 0$.
Unfortunately, under our assumptions, $G_{\gamma}$ is \textbf{not monotone}.
In convex optimization, we call $G$ a prox-gradient mapping or simply a gradient mapping \cite{Nesterov2004}.
We instead use FBR mapping to name $G_{\gamma}$.

To develop algorithms for solving \eqref{eq:MI2}, we need the following properties of $G_{\gamma}$.

%%% Lemma 2.1
\begin{lemma}\label{le:basic_properties}
Let $A$ and $B$ in \eqref{eq:MI2} be maximally monotone, and $B$ be single-valued.
Let $G_{\gamma}(\cdot)$ be defined by \eqref{eq:grad_mapping}.
Then, the following statements hold.
\begin{itemize}
\item[$\mathrm{(a)}$] $G_{\gamma}(\cdot)$ satisfies
\begin{equation}\label{eq:key_est1_of_G} 
\iprods{G_{\gamma}(x) - G_{\gamma}(y), x- y} + \gamma\iprods{G_{\gamma}(x) - G_{\gamma}(y), B(x) - B(y)} \geq  \gamma\norms{G_{\gamma}(x) - G_{\gamma}(y)}^2.
\end{equation}
\item[$\mathrm{(b)}$] 
If, in addition, $B$ is $L$-Lipschitz continuous, then $G_{\gamma}(\cdot)$ is $\frac{(1+ \gamma L)}{\gamma}$-Lipschitz continuous.
\end{itemize}
\end{lemma}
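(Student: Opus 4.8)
The plan is to exploit the defining inclusion of the resolvent to rewrite $G_{\gamma}$ in terms of an element of $A$, and then to invoke the monotonicity of $A$ and of $B$ separately. First I would introduce the shorthand $u := J_{\gamma A}(x - \gamma B(x))$ and $v := J_{\gamma A}(y - \gamma B(y))$, so that by the definition \eqref{eq:grad_mapping} we have $G_{\gamma}(x) = \frac{1}{\gamma}(x - u)$ and $G_{\gamma}(y) = \frac{1}{\gamma}(y - v)$, or equivalently $u = x - \gamma G_{\gamma}(x)$ and $v = y - \gamma G_{\gamma}(y)$. The key observation is that $u = J_{\gamma A}(x - \gamma B(x))$ is equivalent to $x - \gamma B(x) - u \in \gamma A(u)$, which after dividing by $\gamma$ reads $G_{\gamma}(x) - B(x) \in A(u)$; similarly $G_{\gamma}(y) - B(y) \in A(v)$.

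For part (a), the monotonicity of $A$ applied to the two pairs $(u,\, G_{\gamma}(x) - B(x))$ and $(v,\, G_{\gamma}(y) - B(y))$ yields $\iprods{(G_{\gamma}(x) - B(x)) - (G_{\gamma}(y) - B(y)),\, u - v} \geq 0$. Substituting $u - v = (x - y) - \gamma(G_{\gamma}(x) - G_{\gamma}(y))$ and expanding, I expect the term $-\gamma\norms{G_{\gamma}(x) - G_{\gamma}(y)}^2$ and a cross term $\iprods{B(x) - B(y),\, x - y}$ to surface. Rearranging should give $\iprods{G_{\gamma}(x) - G_{\gamma}(y),\, x - y} + \gamma\iprods{G_{\gamma}(x) - G_{\gamma}(y),\, B(x) - B(y)} \geq \gamma\norms{G_{\gamma}(x) - G_{\gamma}(y)}^2 + \iprods{B(x) - B(y),\, x - y}$. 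Since $B$ is monotone, the last term is nonnegative and may be dropped, which is exactly \eqref{eq:key_est1_of_G}.

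For part (b), I would feed the $L$-Lipschitz bound on $B$ into (a) by way of the Cauchy--Schwarz inequality. Bounding $\iprods{G_{\gamma}(x) - G_{\gamma}(y),\, x - y} \leq \norms{G_{\gamma}(x) - G_{\gamma}(y)}\norms{x - y}$ and $\gamma\iprods{G_{\gamma}(x) - G_{\gamma}(y),\, B(x) - B(y)} \leq \gamma L\norms{G_{\gamma}(x) - G_{\gamma}(y)}\norms{x - y}$ turns \eqref{eq:key_est1_of_G} into $\gamma\norms{G_{\gamma}(x) - G_{\gamma}(y)}^2 \leq (1 + \gamma L)\norms{G_{\gamma}(x) - G_{\gamma}(y)}\norms{x - y}$. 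Dividing through by $\gamma\norms{G_{\gamma}(x) - G_{\gamma}(y)}$, with the degenerate case $G_{\gamma}(x) = G_{\gamma}(y)$ being trivial, delivers the Lipschitz constant $\frac{1 + \gamma L}{\gamma}$.

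I do not anticipate a genuine obstacle here: the whole argument collapses to a few lines once the resolvent inclusion is written in the form $G_{\gamma}(\cdot) - B(\cdot) \in A(\cdot)$. The one place to be careful is the sign bookkeeping in the expansion for part (a), in particular tracking the $-\gamma$ factor arising from $u - v$, and recognizing that the leftover term $\iprods{B(x) - B(y),\, x - y}$ is precisely the quantity rendered nonnegative by the monotonicity of $B$.
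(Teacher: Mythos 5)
Your proof is correct and follows essentially the same route as the paper's: both arguments rest on the observation that $G_{\gamma}(x) - B(x)$ belongs to $A\big(J_{\gamma A}(x - \gamma B(x))\big)$, exploit the monotonicity of $A$, and then discard the leftover term $\iprods{B(x) - B(y), x - y} \geq 0$ via the monotonicity of $B$, with part (b) being the identical Cauchy--Schwarz argument. The only cosmetic difference is that the paper packages the monotonicity of $A$ as a separately proved $\gamma$-coercivity property of $M_{\gamma A}(u) = \frac{1}{\gamma}(u - J_{\gamma A}(u))$ (obtained from the firm nonexpansiveness of $J_{\gamma A}$) and applies it at the points $x - \gamma B(x)$ and $y - \gamma B(y)$, whereas you invoke the monotonicity of $A$ directly through the resolvent inclusion; both expansions collapse to the same intermediate inequality $\iprods{G_{\gamma}(x) - G_{\gamma}(y), x - y} + \gamma\iprods{G_{\gamma}(x) - G_{\gamma}(y), B(x) - B(y)} \geq \gamma\norms{G_{\gamma}(x) - G_{\gamma}(y)}^2 + \iprods{B(x) - B(y), x - y}$.
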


%%% Proof of Lemma 2.1.
\begin{proof}
(a)~Let $M_{\gamma A}(u) := \frac{1}{\gamma}(u - J_{\gamma A}(u))$ be the Yosida approximation of $A$.
By \cite[Corollary 23.11]{Bauschke2011}, $M_{\gamma A}(\cdot)$ is $\gamma$-cocoercive.
Since $G_{\gamma}(x) = \frac{1}{\gamma}(x - J_{\gamma A}(x - \gamma B(x)))$, we have $J_{\gamma A}(x - \gamma B(x)) = x - \gamma G_{\gamma}(x)$.
Hence, $M_{\gamma A}(x - \gamma B(x)) = \frac{1}{\gamma }( x - \gamma B(x) - J_{\gamma A}(x - \gamma B(x)) ) = G_{\gamma}(x) - B(x)$. 
Using the $\gamma$-cocoerciveness of $M_{\gamma}$, we have 
\begin{equation*}
\iprods{ G_{\gamma}(x) - G_{\gamma}(y) - (B(x) - B(y)), x - y - \gamma(B(x) - B(y))} \geq \gamma \norms{G_{\gamma}(x) - G_{\gamma}(y) - (B(x) - B(y))}^2.
\end{equation*}
This inequality leads to 
\begin{equation*} 
\arraycolsep=0.2em
\begin{array}{lcl}
\Tc_{[1]} &:= & \iprods{G_{\gamma}(x) - G_{\gamma}(y), x- y} - \gamma \iprods{G_{\gamma}(x) - G_{\gamma}(y), B(x) - B(y)} \vspace{1ex}\\
&& - {~} \iprods{B(x) - B(y), x - y} + \gamma \norms{B(x) - B(y)}^2 \vspace{1ex}\\
&\geq & \gamma \norms{G_{\gamma}(x) - G_{\gamma}(y)}^2 + \gamma \norms{B(x) - B(y)}^2 - 2\gamma \iprods{G_{\gamma}(x) - G_{\gamma}(y), B(x) - B(y)},
\end{array}
\end{equation*}
which is equivalent to 
\begin{equation*} 
\arraycolsep=0.2em
\begin{array}{lcl}
\iprods{G_{\gamma}(x) - G_{\gamma}(y), x- y} + \gamma \iprods{G_{\gamma}(x) - G_{\gamma}(y), B(x) - B(y)} &\geq & \gamma \norms{G_{\gamma}(x) - G_{\gamma}(y)}^2 \vspace{1ex}\\
&& + {~} \iprods{B(x) - B(y), x - y}.
\end{array}
\end{equation*}
Using the monotonicity of $B$, we have
\begin{equation*} 
\arraycolsep=0.2em
\begin{array}{lcl}
\iprods{G_{\gamma}(x) - G_{\gamma}(y), x- y} + \gamma \iprods{G_{\gamma}(x) - G_{\gamma}(y), B(x) - B(y)} &\geq & \gamma \norms{G_{\gamma}(x) - G_{\gamma}(y)}^2,
\end{array}
\end{equation*}
which proves \eqref{eq:key_est1_of_G}.

(b)~Using the $L$-Lipschitz continuity of $B$, this can be further estimated as
\begin{equation*} 
\arraycolsep=0.2em
\begin{array}{lcl}
\iprods{G_{\gamma}(x) - G_{\gamma}(y), x- y}  &\geq & \gamma\norms{G_{\gamma}(x) - G_{\gamma}(y)}^2   - \gamma\norms{G_{\gamma}(x) - G_{\gamma}(y)}\norms{B(x) - B(y)} \vspace{1ex}\\
&\geq & \gamma \norms{G_{\gamma}(x) - G_{\gamma}(y)}^2 - \gamma L\norms{x - y}\norms{G_{\gamma}(x) - G_{\gamma}(y)} \vspace{1ex}\\
& = & \gamma \norms{G_{\gamma}(x) - G_{\gamma}(y)}\left[ \norms{G_{\gamma}(x) - G_{\gamma}(y)} - L \norms{x - y} \right].
\end{array}
\end{equation*}
Using the Cauchy-Schwarz inequality, we can also show that $(1 + \gamma L)\norms{x-y}\norms{G_{\gamma}(x) - G_{\gamma}(y)} \geq \gamma \norms{G_{\gamma}(x) - G_{\gamma}(y)}^2$, leading to the $\frac{(1 + \gamma L)}{\gamma}$-Lipschitz continuity of $G_{\gamma}$.
%Hence, we have $\norms{G_{\gamma}(x) - G_{\gamma}(y)} \leq \frac{(1 + \gamma L)}{\gamma}\norms{x-y}$, which proves (b).
\Eproof
\end{proof}
%%% End of proof.

%%%% 3.1. The splitting anchored extra-gradient method
\beforesubsec
\subsection{\textbf{The splitting extra-anchored  gradient method}}\label{subsec:s_AEG_scheme}
\aftersubsec
%%% A. The derivation of our scheme:
\paragraph{\textbf{The derivation of our scheme.}}
We first develop a new scheme to solve \eqref{eq:MI2} by exploiting the idea of the extra-anchored  gradient (EAG) method in \cite{yoon2021accelerated}.
We adopt the name in \cite{yoon2021accelerated} and call it a splitting extra-anchored  gradient method since it uses the operators $A$ and $B$ separately.
However, our method is fundamentally different from EAG as we will see below.

More precisely, our scheme consists of the following two main steps:
Starting from $x_0 \in \R^p$, it sets $u_0 := x_0 + \gamma B(x_0)$ and at each iteration $k\geq 0$, it updates
\begin{equation}\label{eq:eEAG_02}
\arraycolsep=0.3em
\left\{\begin{array}{lcl}
y_k &:= & x_k + \beta_k(u_0 - x_k) - \eta_k G_{\gamma}(x_k) - \gamma B(y_{k}) + \gamma(1-\beta_k)B(x_k), \vspace{1ex}\\
x_{k+1} &:= & x_k + \beta_k(u_0 - x_k) - \eta_k G_{\gamma}(y_k) - \gamma B(x_{k+1}) + \gamma(1-\beta_k)B(x_k),
\end{array}\right.
\end{equation}
where $G_{\gamma}$ is defined by \eqref{eq:grad_mapping}.
This is indeed an implicit scheme since $y_k$ and $x_{k+1}$ are involved in both the left- and the right-hand sides of \eqref{eq:eEAG_02}.

%%% B. Implementable version.
\beforepara
\paragraph{\textbf{Implementable version.}}
To obtain an equivalent and implementable version \eqref{eq:eEAG_02}, we can rewrite the second step of \eqref{eq:eEAG_02} as follows:
\begin{equation*} 
x_{k+1} + \gamma B(x_{k+1})  =  x_k + \gamma B(x_k) + \beta_k(u_0 - (x_k + \gamma B(x_k))) - \eta_k G_{\gamma}(y_k),
\end{equation*}
If we introduce $u_k := x_k + \gamma B(x_k)$, then we get $x_k = J_{\gamma B}(u_k)$, and the above step becomes $u_{k+1} = u_k + \beta_k(u_0 - u_k) - \eta_kG_{\gamma}(y_k)$, where $J_{\gamma B}(u) = (\Id + \gamma B)^{-1}(u)$ is the resolvent of $\gamma B$.

Similarly, the first step of \eqref{eq:eEAG_02} can be written as $y_k + \gamma B(y_k) = x_k + \gamma B(x_k) + \beta_k(u_0 - (x_k + \gamma B(x_k))) - \eta_kG_{\gamma}(x_k)$.
Let us introduce $v_k := y_k + \gamma B(y_k)$. Then, we have $y_k = J_{\gamma B}(v_k)$ and $v_k = u_k + \beta_k(u_0 - u_k) - \eta_kG_{\gamma}(x_k)$.
Hence, \eqref{eq:eEAG_02} can eventually be rewritten as
\begin{equation}\label{eq:eEAG_02b}
\arraycolsep=0.3em
\left\{\begin{array}{lcl}
v_k &:= & u_k + \beta_k(u_0 - u_k) - \eta_k G_{\gamma}(x_k), \vspace{1ex}\\
y_k & := & J_{\gamma B}(v_k), \vspace{1ex}\\
u_{k+1} &:= & u_k + \beta_k(u_0 - u_k) - \eta_k G_{\gamma}(y_k), \vspace{1ex}\\
x_{k+1} &:= & J_{\gamma B}(u_{k+1}).
\end{array}\right.
\end{equation}
Unlike  \eqref{eq:eEAG_02}, which is rather conceptual, \eqref{eq:eEAG_02b} is implementable.

%%% C. Implementable version.
\beforepara
\paragraph{\textbf{Douglas-Rachford interpretation.}}
To remove the evaluation of $B$, using the identity $\gamma B(J_{\gamma B}(w)) = w - J_{\gamma B}(w)$, we have $\gamma B(y_k) = \gamma B(J_{\gamma B}(v_k)) = v_k - J_{\gamma B}(v_k) = v_k - y_k$.
Hence, $G_{\gamma}(y_k) = \frac{1}{\gamma}(y_k - J_{\gamma A}(2y_k - v_k))$.
Similarly, $G_{\gamma}(x_k) = \frac{1}{\gamma}(x_k - J_{\gamma A}(2x_k - u_k))$.
We can rewrite the above scheme \eqref{eq:eEAG_02b} as
\begin{equation}\label{eq:eEAG_02c}
\arraycolsep=0.3em
\left\{\begin{array}{lcl}
\hat{v}_k &:= & J_{\gamma A}(2x_k - u_k), \vspace{1ex}\\
v_k &:= & u_k + \beta_k(u_0 - u_k) -   \frac{\eta_k}{\gamma}(x_k - \hat{v}_k), \vspace{1ex}\\
y_k & := & J_{\gamma B}(v_k), \vspace{1ex}\\
\hat{u}_{k+1}  &:= & J_{\gamma A}(2y_k - v_k), \vspace{1ex}\\
u_{k+1} &:= & u_k + \beta_k(u_0 - u_k) - \frac{\eta_k}{\gamma}(y_k - \hat{u}_{k+1}), \vspace{1ex}\\
x_{k+1} &:= & J_{\gamma B}(u_{k+1}).
\end{array}\right.
\end{equation}
This scheme does not require to evaluate $B$ as in \eqref{eq:eEAG_02b}, which can be  viewed as an accelerated variant of the Douglas-Rachford splitting method (see Section~\ref{subsec:derivation_of_ADR}).
However, it uses the resolvent of $A$ and $B$ twice at each iteration. 

%\begin{equation}\label{eq:eEAG_02c}
%\arraycolsep=0.3em
%\left\{\begin{array}{lcl}
%v_k &:= & u_k + \beta_k(u_0 - u_k) -   \frac{\eta_k}{\gamma}(x_k - J_{\gamma A}(2x_k - u_k)), \vspace{1ex}\\
%y_k & := & J_{\gamma B}(v_k), \vspace{1ex}\\
%u_{k+1} &:= & u_k + \beta_k(u_0 - u_k) - \frac{\eta_k}{\gamma}(y_k - J_{\gamma A}(2y_k - v_k)), \vspace{1ex}\\
%x_{k+1} &:= & J_{\gamma B}(u_{k+1}).
%\end{array}\right.
%\end{equation}

%Let us compare \eqref{eq:eEAG_02c} and the standard Douglas-Rachford splitting scheme in \cite{douglas1956numerical,Lions1979}:
%\begin{equation}\label{eq:DR_scheme0}
%%u_{k+1} := u_k + J_{\gamma A}\left( 2J_{\gamma B}(u_k) - u_k\right) - J_{\gamma B}(u_k).
%\arraycolsep=0.3em
%\left\{\begin{array}{lcl}
%x_k & := & \prox_{\gamma B}(u_k), \vspace{1ex}\\
%\hat{v}_k & := & \prox_{\gamma A}(2x_k - u_k), \vspace{1ex}\\
%u_{k+1} &:= & u_k + \hat{v}_k - x_k.
%\end{array}\right.
%\end{equation}
%If we $\beta_k = 0$ and $\frac{\eta_k}{\gamma} = 1$ in \eqref{eq:eEAG_02c}, then $v_k = u_k + \hat{v}_k - x_k$ and $u_{k+1} = u_k + \hat{u}_{k+1} - y_k$.
%Hence, lines 2 to 4 of \eqref{eq:eEAG_02c} are similar to three steps in the DR method \eqref{eq:DR_scheme0} except that the input of $J_{\gamma B}$ is $v_k$ instead of $u_k$.
%Clearly, our scheme \eqref{eq:eEAG_02c} requires two evaluations of $J_{\gamma A}(\cdot)$ and two evaluations of $J_{\gamma B}(\cdot)$ at each iteration, which is twice the cost of \eqref{eq:DR_scheme0}.
%However, it is an accelerated method compared to \eqref{eq:DR_scheme0}.

%%%% B. One-iteration analysis -- key estimate.
\beforepara
\paragraph{\textbf{One-iteration analysis -- Key estimate.}}
To analyze \eqref{eq:eEAG_02}, let us consider the following potential function:
\begin{equation}\label{eq:eEAG_02_L_func}
\tilde{\Vc}_k := a_k\norms{G_{\gamma}(x_k)}^2 + b_k\iprods{G_{\gamma}(x_k), x_k + \gamma B(x_k) - u_0}.
\end{equation}
This potential function is still different from the ones in \cite{diakonikolas2020halpern,yoon2021accelerated}.
The following lemma provides a key estimate to analyze convergence of \eqref{eq:eEAG_02}.

%%% Lemma 1.
\begin{lemma}\label{le:main_result2}
Assume that $A$ and $B$ in \eqref{eq:MI2} are maximally monotone, and $B$ is single-valued and $L$-Lipschitz continuous.
Let $\sets{(x_k, y_k, u_k, v_k)}$ be generated by \eqref{eq:eEAG_02}, and $G_{\gamma}$  be defined by \eqref{eq:grad_mapping}.
Let $a_k$, $b_k$, $\beta_k$, $\gamma$, and $\eta_k$ be positive and chosen such that $b_{k+1} := \frac{b_k}{1-\beta_k}$, $a_k = \frac{b_k\eta_k}{2\beta_k}$,
\begin{equation}\label{eq:param_cond3}
0 < \eta_{k+1} \leq \frac{\eta_k\beta_{k+1}(1 - M\eta_k^2 - \beta_k^2)}{(1-\beta_k)\beta_k(1 - M\eta_k^2)}, \quad M\eta_k^2 + \beta_k^2 < 1,  \ \ \text{and} \ \ \eta_{k+1} \leq \frac{\beta_{k+1}(1-\beta_k)}{M\eta_k\beta_k}.
\end{equation}
where $M := \frac{(1 + \gamma L)^2}{\gamma^2}$.
Then, $\tilde{\Vc}_k$ defined by \eqref{eq:eEAG_02_L_func} satisfies $\tilde{\Vc}_{k+1} \leq \tilde{\Vc}_k - \frac{b_k\gamma}{\beta_k}\norms{G_{\gamma}(x_{k+1}) - G_{\gamma}(x_k)}^2$ for all $k\geq 0$.
\end{lemma}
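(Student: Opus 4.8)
The plan is to reduce the implicit scheme \eqref{eq:eEAG_02} to an anchored extra-gradient iteration in the \emph{resolvent variable} and then to mirror the one-iteration analysis of Lemma~\ref{le:A_Popov_method}, with the monotonicity of $G$ used there replaced by the cocoercivity estimate \eqref{eq:key_est1_of_G}. Concretely, I would first set $u_k := x_k + \gamma B(x_k)$ and $v_k := y_k + \gamma B(y_k)$, so that $x_k = J_{\gamma B}(u_k)$ and $y_k = J_{\gamma B}(v_k)$, and \eqref{eq:eEAG_02} collapses to the implementable form \eqref{eq:eEAG_02b}, namely
\begin{equation*}
v_k = u_k + \beta_k(u_0 - u_k) - \eta_k G_{\gamma}(x_k), \qquad u_{k+1} = u_k + \beta_k(u_0 - u_k) - \eta_k G_{\gamma}(y_k).
\end{equation*}
Writing $\widetilde{G} := G_{\gamma}\circ J_{\gamma B}$, this is exactly the anchored extra-gradient method applied to $\widetilde{G}$ with anchor $u_0$, and the potential \eqref{eq:eEAG_02_L_func} becomes $\tilde{\Vc}_k = a_k\norms{G_{\gamma}(x_k)}^2 + b_k\iprods{G_{\gamma}(x_k), u_k - u_0}$. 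The point is that \eqref{eq:key_est1_of_G} says precisely that $\widetilde{G}$ is $\gamma$-cocoercive in the $u$-variable, i.e. $\iprods{G_{\gamma}(x_{k+1}) - G_{\gamma}(x_k), u_{k+1} - u_k} \geq \gamma\norms{G_{\gamma}(x_{k+1}) - G_{\gamma}(x_k)}^2$, while Lemma~\ref{le:basic_properties}(b) together with the nonexpansiveness of $J_{\gamma B}$ shows $\widetilde{G}$ is $\sqrt{M}$-Lipschitz with $M = (1+\gamma L)^2/\gamma^2$.

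Next I would record the difference identities analogous to \eqref{eq:a_PP_proof0}: from the two displayed relations, $u_{k+1} - u_k = \beta_k(u_0 - u_k) - \eta_k G_{\gamma}(y_k) = \tfrac{\beta_k}{1-\beta_k}(u_0 - u_{k+1}) - \tfrac{\eta_k}{1-\beta_k}G_{\gamma}(y_k)$, together with $u_{k+1} - v_k = \eta_k(G_{\gamma}(x_k) - G_{\gamma}(y_k))$. Expanding $\tilde{\Vc}_k - \tilde{\Vc}_{k+1}$ and using $b_{k+1} = b_k/(1-\beta_k)$, I would treat the linear part $b_k\iprods{G_{\gamma}(x_k), u_k - u_0} - b_{k+1}\iprods{G_{\gamma}(x_{k+1}), u_{k+1} - u_0}$ exactly as in the derivation of \eqref{eq:Popov_scheme_01_proof1}, but invoking the cocoercivity of $\widetilde{G}$ in place of mere monotonicity. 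The surplus $\gamma\norms{G_{\gamma}(x_{k+1}) - G_{\gamma}(x_k)}^2$ supplied by cocoercivity, carried through the substitutions and scaled by the multiplier $\tfrac{b_k}{\beta_k}$, is precisely the claimed descent $\tfrac{b_k\gamma}{\beta_k}\norms{G_{\gamma}(x_{k+1}) - G_{\gamma}(x_k)}^2$, which I would keep aside as a separate term rather than fold into the quadratic form.

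It then remains to show the residual quadratic form in $G_{\gamma}(x_{k+1})$, $G_{\gamma}(y_k)$, and $G_{\gamma}(x_k)$ is nonnegative, after which it can be dropped. For this I would bound the extra-gradient residual via Lemma~\ref{le:basic_properties}(b): since $u_{k+1} - v_k = \eta_k(G_{\gamma}(x_k) - G_{\gamma}(y_k))$, the $\sqrt{M}$-Lipschitz property of $\widetilde{G}$ gives the clean bound $\norms{G_{\gamma}(x_{k+1}) - G_{\gamma}(y_k)}^2 \leq M\eta_k^2\norms{G_{\gamma}(x_k) - G_{\gamma}(y_k)}^2$, a simpler (no-doubling, no $\theta$) counterpart of \eqref{eq:Popov_scheme_01_proof3}. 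Multiplying this by $\tfrac{a_k}{M\eta_k^2}$ and adding it to the expanded potential difference, the choice $a_k = \tfrac{b_k\eta_k}{2\beta_k}$ forces both the $\norms{G_{\gamma}(x_k)}^2$ terms and the $\iprods{G_{\gamma}(x_{k+1}) - G_{\gamma}(x_k), G_{\gamma}(y_k)}$ cross term to cancel, leaving a quadratic form in $G_{\gamma}(x_{k+1})$ and $G_{\gamma}(y_k)$ with coefficients $S_k^{11}, S_k^{12}, S_k^{22}$ identical in form to those in Lemma~\ref{le:A_Popov_method}. Completing the square as $\norms{\sqrt{S_k^{11}}G_{\gamma}(x_{k+1}) - \sqrt{S_k^{22}}G_{\gamma}(y_k)}^2 \geq 0$ closes the estimate, provided $S_k^{11}\geq 0$, $S_k^{22}\geq 0$, and $\sqrt{S_k^{11}S_k^{22}} \geq S_k^{12}$, which are exactly the three conditions in \eqref{eq:param_cond3}.

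The main obstacle I anticipate is the cocoercivity bookkeeping: unlike the monotone case, every use of \eqref{eq:key_est1_of_G} carries an extra $\gamma\norms{\cdot}^2$, and I must verify that exactly the right multiple collects into the advertised descent while the remainder stays inside the nonnegative square, so that no spurious $\norms{G_{\gamma}(x_{k+1}) - G_{\gamma}(x_k)}^2$ contaminates the quadratic form governed by the discriminant condition. A secondary subtlety is that $\widetilde{G}$ is only cocoercive and Lipschitz as an operator of $u$ (it is \emph{not} monotone as a map of $x$), so all inner products must be taken consistently in the $u$-variable; the identity $\gamma B(J_{\gamma B}(w)) = w - J_{\gamma B}(w)$ is what keeps $\tilde{\Vc}_k$ expressible through $u_k - u_0$ as in \eqref{eq:eEAG_02_L_func} and keeps the reduction to \eqref{eq:eEAG_02b} exact.
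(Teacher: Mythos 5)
Your proposal is correct and takes essentially the same route as the paper's proof: both pass to the resolvent variable $u_k = x_k + \gamma B(x_k)$ (i.e., the implementable form \eqref{eq:eEAG_02b}), use \eqref{eq:key_est1_of_G} — which is precisely your $\gamma$-cocoercivity of $\widetilde{G} = G_{\gamma}\circ J_{\gamma B}$ in the $u$-variable — to extract the descent term $\tfrac{b_k\gamma}{\beta_k}\norms{G_{\gamma}(x_{k+1}) - G_{\gamma}(x_k)}^2$, bound the extra-gradient residual by $M\eta_k^2\norms{G_{\gamma}(x_k)-G_{\gamma}(y_k)}^2$ via Lemma~\ref{le:basic_properties}(b) and the nonexpansiveness of $J_{\gamma B}$, and close the remaining quadratic form in $G_{\gamma}(x_{k+1}), G_{\gamma}(y_k)$ by completing the square under the three conditions \eqref{eq:param_cond3}. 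The only difference is presentational: you name the composite operator and call \eqref{eq:key_est1_of_G} cocoercivity, which is exactly how the paper uses it implicitly.
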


\begin{proof}
First, from the third line of \eqref{eq:eEAG_02}, we have
\begin{equation}\label{eq:proof10_est1}
\arraycolsep=0.3em
\begin{array}{lcl}
u_{k+1} - u_k &= & \beta_k(u_0 - u_k) - \eta_kG_{\gamma}(y_k), \vspace{1ex}\\
u_{k+1} - u_k &= & \frac{\beta_k}{1-\beta_k}(u_0 - u_{k+1}) - \frac{\eta_k}{1-\beta_k}G_{\gamma}(y_k).
\end{array}
\end{equation}
Since $x_{k+1} = J_{\gamma B}(u_{k+1})$, we have $x_{k+1} + \gamma B(x_{k+1}) = u_{k+1}$.
Using this into \eqref{eq:key_est1_of_G}, we have
\begin{equation}\label{eq:proof10_est2}
\iprods{G_{\gamma}(x_{k+1}) - G_{\gamma}(x_k), u_{k+1} - u_k} \geq \gamma \norms{G_{\gamma}(x_{k+1}) - G_{\gamma}(x_k)}^2.
\end{equation}
Substituting \eqref{eq:proof10_est1} into \eqref{eq:proof10_est2} and rearranging the result, we obtain
\begin{equation}\label{eq:proof10_est3}
\arraycolsep=0.3em
\begin{array}{lcl}
\frac{\beta_k}{1-\beta_k}\iprods{G_{\gamma}(x_{k+1}), u_0 - u_{k+1}} &\geq & \beta_k \iprods{G_{\gamma}(x_k), u_0 -  u_k} +  \gamma \norms{G_{\gamma}(x_{k+1}) - G_{\gamma}(x_k)}^2 \vspace{1ex}\\
&& + {~} \frac{\eta_k}{(1-\beta_k)}\iprods{G_{\gamma}(x_{k+1}), G_{\gamma}(y_k)} -  \eta_k \iprods{G_{\gamma}(x_k), G_{\gamma}(y_k)}.
\end{array}
\end{equation}
Multiplying this inequality by $\frac{b_k}{\beta_k}$, rearranging the result and using $\frac{b_k}{1-\beta_k} = b_{k+1}$, we get
\begin{equation}\label{eq:proof10_est4}
\arraycolsep=0.3em
\begin{array}{ll}
b_k\iprods{G_{\gamma}(x_k), u_k - u_0} & - {~} b_{k+1}\iprods{G_{\gamma}(x_{k+1}), u_{k+1} - u_0} \geq \frac{b_k\gamma}{\beta_k}\norms{G_{\gamma}(x_{k+1}) - G_{\gamma}(x_k)}^2 \vspace{1ex}\\
& + {~} \frac{b_k\eta_k}{\beta_k(1-\beta_k)}\iprods{G_{\gamma}(x_{k+1}), G_{\gamma}(y_k)}  -  \frac{b_k\eta_k}{\beta_k}\iprods{G_{\gamma}(x_k), G_{\gamma}(y_k)}.
\end{array}
\end{equation}
Next, using $\tilde{\Vc}_k$ from \eqref{eq:eEAG_02_L_func} and \eqref{eq:proof10_est4}, we can show that
\begin{equation}\label{eq:proof10_est5}
\arraycolsep=0.2em
\begin{array}{lcl}
\tilde{\Vc}_k - \tilde{\Vc}_{k+1} 
%&= & a_k\norms{G_{\gamma}(x_k)}^2 - a_{k+1}\norms{G_{\gamma}(x_{k+1})}^2 + b_k\iprods{G_{\gamma}(x_k), u_k - u_0} \vspace{1ex}\\
%&& - {~} b_{k+1}\iprods{G_{\gamma}(x_{k+1}), u_{k+1} - u_0} \vspace{1ex}\\
&\overset{\tiny\eqref{eq:proof10_est4}}{\geq} & a_k\norms{G_{\gamma}(x_k)}^2 - a_{k+1}\norms{G_{\gamma}(x_{k+1})}^2 +  \frac{b_k\gamma}{\beta_k}\norms{G_{\gamma}(x_{k+1}) - G_{\gamma}(x_k)}^2 \vspace{1ex}\\
&& - {~} \frac{b_k\eta_k}{\beta_k}\iprods{G_{\gamma}(x_k), G_{\gamma}(y_k)} + \frac{b_k\eta_k}{\beta_k(1-\beta_k)}\iprods{G_{\gamma}(x_{k+1}), G_{\gamma}(y_k)}  \vspace{1ex}\\
&= &  a_k \norms{G_{\gamma}(x_k)}^2  - a_{k+1} \norms{G_{\gamma}(x_{k+1})}^2  +   \frac{b_k\gamma}{\beta_k}\norms{G_{\gamma}(x_{k+1}) - G_{\gamma}(x_k)}^2 \vspace{1ex}\\
&& + {~}  b_{k+1}\eta_k \iprods{G_{\gamma}(x_{k+1}), G_{\gamma}(y_k)}   + \frac{b_k\eta_k}{\beta_k}\iprods{G_{\gamma}(x_{k+1}) - G_{\gamma}(x_k), G_{\gamma}(y_k)}.
 \end{array}
\end{equation}
Let $M := \frac{(1 + \gamma L)^2}{\gamma^2}$.
From \eqref{eq:eEAG_02}, we have $u_{k+1} - v_k = -\eta_k(G_{\gamma}(y_k) - G_{\gamma}(x_k))$.
Using this expression, the Lipschitz continuity of $G_{\gamma}$ from Lemma~\ref{le:basic_properties}(b), and the nonexpansiveness of $J_{\gamma B}$, we have
\begin{equation*}
\arraycolsep=0.3em
\begin{array}{lcl}
\norms{G(x_{k+1}) - G(y_k)}^2 & \leq & M\norms{x_{k+1} - y_k}^2 = M \norms{J_{\gamma B}(u_{k+1}) - J_{\gamma B}(v_k)}^2 \leq M\norms{u_{k+1} - v_k}^2 \vspace{1ex}\\
&= &  M\eta_k^2\norms{G_{\gamma}(y_k) - G_{\gamma}(x_k)}^2.
\end{array}
\end{equation*}
Expanding this inequality to get
\begin{equation*}
\arraycolsep=0.3em
\begin{array}{ll}
\norms{G_{\gamma}(x_{k+1})}^2 & + {~} \left(1 - M\eta_k^2 \right)\norms{G_{\gamma}(y_k)}^2 - M\eta_k^2\norms{G_{\gamma}(x_k)}^2 - 2\left(1 - M\eta_k^2\right) \iprods{G_{\gamma}(x_{k+1}),G_{\gamma}(y_{k})} \vspace{1ex}\\
& - {~} 2 M\eta_k^2 \iprods{G_{\gamma}(x_{k+1}) - G_{\gamma}(x_{k}),G_{\gamma}(y_{k})} \leq 0.
\end{array}
\end{equation*}
Multiplying this inequality by $\frac{a_k}{M\eta_k^2}$ and adding the result to \eqref{eq:proof10_est5}, we obtain
\begin{equation}\label{eq:proof10_est5}
 \hspace{-1.0ex}
\arraycolsep=0.2em
\begin{array}{lcl}
\tilde{\Vc}_k - \tilde{\Vc}_{k+1} &\geq &  \left(\frac{a_k}{M\eta_k^2} - a_{k+1} \right)\norms{G_{\gamma}(x_{k+1})}^2 + a_k\left(\frac{1}{M\eta_k^2} - 1\right) \norms{G_{\gamma}(y_k)}^2 \vspace{1ex}\\
&& - {~}   2\left( a_k\left(\frac{1}{M\eta_k^2} - 1\right) - \frac{ b_{k+1}\eta_k}{2} \right) \iprods{G_{\gamma}(x_{k+1}), G_{\gamma}(y_k)}  \vspace{1ex}\\
&& + {~}  \left( \frac{b_k\eta_k}{\beta_k} - 2a_k\right) \iprods{G_{\gamma}(x_{k+1}) - G_{\gamma}(x_k), G_{\gamma}(y_k)} + \frac{b_k\gamma}{\beta_k}\norms{G_{\gamma}(x_{k+1}) - G_{\gamma}(x_k)}^2.
 \end{array}
 \hspace{-5.0ex}
\end{equation}
Let us choose $a_k = \frac{b_k\eta_k}{2\beta_k}$ and define 
\begin{equation*}
\arraycolsep=0.3em
\left\{\begin{array}{lclcl}
\tilde{S}_k^{11} & := & \frac{a_k}{M\eta_k^2} - a_{k+1}  & = &  \frac{b_k}{2M\eta_k\beta_k} - \frac{b_{k}\eta_{k+1}}{2(1-\beta_k)\beta_{k+1}}, \vspace{1ex}\\
\tilde{S}_k^{22} & := & a_k\left(\frac{1}{M\eta_k^2} - 1\right)   & = &  \frac{b_k(1  - M\eta_k^2)}{ 2M \eta_k \beta_k}, \vspace{1ex}\\
\tilde{S}_k^{12} & := & a_k\left(\frac{1}{M\eta_k^2} - 1\right) - \frac{ b_{k+1}\eta_k}{2} &= &\frac{b_k (1 - M\eta_k^2 )}{2M\eta_k\beta_k} -  \frac{b_k\eta_k}{2(1-\beta_k)}.
\end{array}\right.
\end{equation*}
Then, \eqref{eq:proof10_est5} reduces to
\begin{equation*} 
\arraycolsep=0.2em
\begin{array}{lcl}
\tilde{\Vc}_k - \tilde{\Vc}_{k+1} &\geq &  \tilde{S}_k^{11} \norms{G_{\gamma}(x_{k+1})}^2 + \tilde{S}_k^{22} \norms{G_{\gamma}(y_k)}^2 - 2\tilde{S}_k^{12} \iprods{G_{\gamma}(x_{k+1}), G_{\gamma}(y_k)} \vspace{1ex}\\
&& + {~} \frac{b_k\gamma}{\beta_k}\norms{G_{\gamma}(x_{k+1}) - G_{\gamma}(x_k)}^2.
 \end{array}
\end{equation*}
If $\tilde{S}_k^{11} \geq 0$, $\tilde{S}_k^{22} \geq 0$, and  $\tilde{S}_k^{12} \leq \big(\tilde{S}_k^{11}\tilde{S}_k^{22} \big)^{1/2}$, then this estimate is lower bounded by
\begin{equation*}%\label{eq:proof10_est6}
\arraycolsep=0.2em
\begin{array}{lcl}
\tilde{\Vc}_k - \tilde{\Vc}_{k+1} &\geq &  \norms{ \big(\tilde{S}_k^{11} \big)^{1/2} G_{\gamma}(x_{k+1}) - \big(\tilde{S}_k^{22}\big)^{1/2} G_{\gamma}(y_k) }^2 + \frac{b_k\gamma}{\beta_k}\norms{G_{\gamma}(x_{k+1}) - G_{\gamma}(x_k)}^2 \geq 0.
 \end{array}
\end{equation*}
This proves that $\tilde{\Vc}_{k+1} \leq \tilde{\Vc}_k - \frac{b_k\gamma}{\beta_k}\norms{G_{\gamma}(x_{k+1}) - G_{\gamma}(x_k)}^2$.

Finally, the condition $\tilde{S}_k^{12} \leq \big( \tilde{S}_k^{11}\tilde{S}_k^{22}\big)^{1/2}$ holds if
\begin{equation*}
 \frac{(1 - M\eta_k^2)}{ M\eta_k \beta_k} \left( \frac{1}{M\eta_k\beta_k} - \frac{\eta_{k+1}}{(1-\beta_k)\beta_{k+1}} \right) \geq \left(  \frac{(1 - M\eta_k^2) }{M\eta_k\beta_k} - \frac{\eta_k}{1-\beta_k}\right)^2.
\end{equation*}
This condition can be simplified as the first condition of \eqref{eq:param_cond3}.
The second condition $M\eta_k^2 + \beta_k^2 < 1$ guarantees that $M\eta_k^2 < 1$, which is equivalent to $\tilde{S}_k^{22} \geq 0$.
The third condition of \eqref{eq:param_cond3} guarantees that $\tilde{S}_k^{11} \geq 0$.
\Eproof
\end{proof}
%%% End of the proof.

%%% C. Convergence guarantees.
\beforepara
\paragraph{\textbf{Convergence guarantees.}}
For $M := \frac{(1 + \gamma L)^2}{\gamma^2}$, let us update $\beta_k$ and $\eta_k$ as follows:
\begin{equation}\label{eq:mEAG_02_param_update}
\beta_k := \frac{1}{k+2} \quad \text{and} \quad \eta_{k+1} := \frac{\eta_k\beta_{k+1}(1 - M\eta_k^2 - \beta_k^2)}{(1-\beta_k)\beta_k(1 - M\eta_k^2)},
\end{equation}
where $0 < \eta_0 < \frac{1}{\sqrt{2M}}$.
As proved in Lemma~\ref{le:eta_limit}, $\sets{\eta_k}$ is non-increasing and its limit $\lim_{k\to\infty}\eta_k = \eta_{*}$ exists and positive, i.e. $\eta_{*} > \underline{\eta} :=  \frac{\eta_0(1 - 2M\eta_0^2)}{1-M\eta_0^2} > 0$.
The following theorem states the convergence of \eqref{eq:eEAG_02}, and hence also \eqref{eq:eEAG_02c}.

%%% Theorem 3.1.
\begin{theorem}\label{th:eEAG_02_convergence1}
Let $A$ and $B$ of \eqref{eq:MI2} be maximally monotone, and $B$ be single-valued and $L$-Lipschitz continuous.
Let $\sets{(x_k, y_k, v_k)}$ be generated by \eqref{eq:eEAG_02} to approximate a zero point of \eqref{eq:MI2} using  the update rules \eqref{eq:mEAG_02_param_update} for $\beta_k$ and $\eta_k$ such that $0 < \eta_0 \leq\frac{\gamma}{\sqrt{3}(1 + \gamma L)}$.
Then, we have the following statements:
\begin{equation}\label{eq:eEAG_02_bound12}
\arraycolsep=0.3em
\left\{\begin{array}{lcl}
\norms{G_{\gamma}(x_k)}^2 & \leq &  \dfrac{4C_{*}\norms{x_0 - x^{\star}}^2}{\eta_{*}(k+1)(k+2)}, \vspace{1.5ex}\\
\displaystyle\sum_{i=0}^{k-1}(i+1)(i+2)\norms{G_{\gamma}(x_{i+1}) - G_{\gamma}(x_i)}^2 & \leq & \dfrac{C_{*}\norms{x_0 - x^{\star}}^2}{\gamma},
\end{array}\right.
\end{equation}
where $\eta_{*} := \lim_{k\to\infty}\eta_k > 0$ and $\mathcal{C}_{*} := \frac{(1+\gamma L)^2(\eta_0\eta_{*} + \gamma^2)}{\gamma^2\eta_{*}}$.
Consequently, we also have $\norms{G_{\gamma}(x_{k+1}) - G_{\gamma}(x_k)}^2 = \SmallO{\frac{1}{k^2}}$.
\end{theorem}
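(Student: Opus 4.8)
The plan is to mirror the proof of Theorem~\ref{th:a_PP_convergence1}, replacing the monotonicity of $G$ with the key estimate \eqref{eq:key_est1_of_G} for the (non-monotone) residual $G_{\gamma}$, and then telescoping the one-iteration bound of Lemma~\ref{le:main_result2}. First I would verify that the update rules \eqref{eq:mEAG_02_param_update} satisfy the three conditions in \eqref{eq:param_cond3}. This is essentially identical to the verification in Theorem~\ref{th:a_PP_convergence1}: with $\beta_k = \frac{1}{k+2}$ the second and third conditions reduce to $\eta_k \leq \sqrt{\frac{k+1}{M(k+3)}}$, which holds whenever $\eta_0 \leq \frac{1}{\sqrt{3M}}$, while the first condition is exactly the update rule for $\eta_{k+1}$. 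Applying Lemma~\ref{le:eta_limit} with this $M$ then gives that $\sets{\eta_k}$ is nonincreasing and $\eta_{*} := \lim_{k\to\infty}\eta_k \geq \underline{\eta} > 0$. I would also record that $b_{k+1} = \frac{b_k}{1-\beta_k}$ yields $b_k = b_0(k+1)$, and hence $a_k = \frac{b_k\eta_k}{2\beta_k} = \frac{b_0\eta_k(k+1)(k+2)}{2} \geq \frac{b_0\eta_{*}(k+1)(k+2)}{2}$.

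The heart of the argument is a lower bound for the potential $\tilde{\Vc}_k$. Writing $u_k = x_k + \gamma B(x_k)$ and $u^{\star} := x^{\star} + \gamma B(x^{\star})$, and using $G_{\gamma}(x^{\star}) = 0$, the estimate \eqref{eq:key_est1_of_G} applied at the pair $(x_k, x^{\star})$ gives $\iprods{G_{\gamma}(x_k), u_k - u^{\star}} \geq \gamma\norms{G_{\gamma}(x_k)}^2 \geq 0$. This is the substitute for plain monotonicity and is the step I expect to be the main obstacle, precisely because $G_{\gamma}$ itself is \emph{not} monotone, so the positivity of this cross term must be routed through the resolvent identity established in Lemma~\ref{le:basic_properties} rather than asserted directly. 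Decomposing $\tilde{\Vc}_k = a_k\norms{G_{\gamma}(x_k)}^2 + b_k\iprods{G_{\gamma}(x_k), u_k - u^{\star}} + b_k\iprods{G_{\gamma}(x_k), u^{\star} - u_0}$, dropping the nonnegative middle term, and applying Young's inequality to the last term, I obtain $\tilde{\Vc}_k \geq \frac{a_k}{2}\norms{G_{\gamma}(x_k)}^2 - \frac{b_k^2}{2a_k}\norms{u_0 - u^{\star}}^2$. Finally I would bound $\norms{u_0 - u^{\star}} \leq (1 + \gamma L)\norms{x_0 - x^{\star}}$ via the $L$-Lipschitz continuity of $B$, so that $\norms{u_0 - u^{\star}}^2 \leq \gamma^2 M\norms{x_0 - x^{\star}}^2$.

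With these ingredients in hand, the remaining steps are routine. Telescoping the one-iteration inequality of Lemma~\ref{le:main_result2} and using $\frac{b_i\gamma}{\beta_i} = b_0\gamma(i+1)(i+2)$ gives $\tilde{\Vc}_k + b_0\gamma\sum_{i=0}^{k-1}(i+1)(i+2)\norms{G_{\gamma}(x_{i+1}) - G_{\gamma}(x_i)}^2 \leq \tilde{\Vc}_0$. Since $u_0 - u_0 = 0$, the linear term in $\tilde{\Vc}_0$ vanishes, so $\tilde{\Vc}_0 = a_0\norms{G_{\gamma}(x_0)}^2 = b_0\eta_0\norms{G_{\gamma}(x_0)}^2 \leq b_0\eta_0 M\norms{x_0 - x^{\star}}^2$, using the Lipschitz bound $\norms{G_{\gamma}(x_0)}^2 \leq M\norms{x_0 - x^{\star}}^2$ from Lemma~\ref{le:basic_properties}(b).

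Combining the lower bound on $\tilde{\Vc}_k$ with $\frac{b_k^2}{2a_k} \leq \frac{b_0}{\eta_{*}}$ and $\frac{a_k}{2} \geq \frac{b_0\eta_{*}(k+1)(k+2)}{4}$ then yields the first bound in \eqref{eq:eEAG_02_bound12} with $C_{*} = \frac{M(\eta_0\eta_{*} + \gamma^2)}{\eta_{*}}$. Using instead the crude lower bound $\tilde{\Vc}_k \geq -\frac{b_0\gamma^2 M}{\eta_{*}}\norms{x_0 - x^{\star}}^2$ in the telescoped inequality and dividing through by $b_0\gamma$ gives the summable second bound. The concluding rate $\norms{G_{\gamma}(x_{k+1}) - G_{\gamma}(x_k)}^2 = \SmallO{\frac{1}{k^2}}$ follows because the summability of the nonnegative sequence $(k+1)(k+2)\norms{G_{\gamma}(x_{k+1}) - G_{\gamma}(x_k)}^2$ forces its general term to tend to zero.
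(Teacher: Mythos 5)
Your proposal is correct and follows essentially the same route as the paper's own proof: verifying \eqref{eq:param_cond3} exactly as in Theorem~\ref{th:a_PP_convergence1}, lower-bounding $\tilde{\Vc}_k$ by applying \eqref{eq:key_est1_of_G} at the pair $(x_k, x^{\star})$ together with Young's inequality, telescoping the one-iteration bound of Lemma~\ref{le:main_result2} with $\frac{b_i\gamma}{\beta_i} = b_0\gamma(i+1)(i+2)$, and bounding $\tilde{\Vc}_0$ and $\norms{u^{\star} - u_0}$ via the Lipschitz continuity of $G_{\gamma}$ and $B$. The only additions are cosmetic: you make explicit the crude lower bound on $\tilde{\Vc}_k$ used for the summability estimate and the final $\SmallO{1/k^2}$ argument, both of which the paper leaves implicit.
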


%%% Proof of Theorem 3.1.
\begin{proof}
First, similar to the proof of Theorem~\ref{th:a_PP_convergence1}, if $\beta_k$ and $\eta_k$ are updated by \eqref{eq:mEAG_02_param_update},  then with $0 < \eta_0 \leq\frac{\gamma}{\sqrt{3}(1+\gamma L)}$, the conditions \eqref{eq:param_cond3} of Lemma~\ref{le:main_result2} hold.
Hence, we obtain
\begin{equation}\label{eq:eEAG_02_convergence1_proof1}
\tilde{\Vc}_{k+1}  \leq \tilde{\Vc}_{k+1} + \frac{b_k\gamma}{\beta_k}\norms{G_{\gamma}(x_{k+1}) - G_{\gamma}(x_k)}^2 \leq \tilde{\Vc}_k. 
\end{equation}
Now, using \eqref{eq:key_est1_of_G}, we can derive that 
\begin{equation*}%\label{eq:eEAG_02_convergence1_proof2}
\arraycolsep=0.2em
% \hspace{-2ex}
\begin{array}{lcl}
b_k\iprods{G_{\gamma}(x_k), x_k + \gamma B(x_k) - u_0} & = & b_k\iprods{G_{\gamma}(x_k) - G_{\gamma}(x^{\star}), x_k  - x^{\star} + \gamma (B(x_k) - B(x^{\star}))} \vspace{1ex}\\
&& + {~} b_k\iprods{G_{\gamma}(x_k), x^{\star} + \gamma B(x^{\star}) - u_0} \vspace{1ex}\\
& \overset{\tiny\eqref{eq:key_est1_of_G}}{\geq} & \gamma b_k \norms{G_{\gamma}(x_k)}^2 -  \frac{a_k}{2}\norms{G_{\gamma}(x_k)}^2 - \frac{b_k^2}{2a_k}\norms{x^{\star} + \gamma B(x^{\star}) - u_0}^2 \vspace{1ex}\\
&\geq & - \frac{a_k}{2}\norms{G_{\gamma}(x_k)}^2 - \frac{b_k^2}{2a_k}\norms{x^{\star} + \gamma B(x^{\star}) - u_0}^2.
\end{array}
% \hspace{-10.0ex}
\end{equation*}
Using this estimate and $\tilde{\Vc}_k$ in \eqref{eq:eEAG_02_L_func}, we have
\begin{equation}\label{eq:eEAG_02_convergence1_proof3}
\arraycolsep=0.3em
\begin{array}{lcl}
\tilde{\Vc}_k &= & a_k\norms{G_{\gamma}(x_k)}^2 + b_k\iprods{G_{\gamma}(x_k), x_k + \gamma B(x_k) - u_0} \vspace{1ex}\\
&\geq & \frac{a_k}{2}\norms{G_{\gamma}(x_k)}^2  -  \frac{b_k^2}{2a_k}\norms{x^{\star} + \gamma B(x^{\star}) - u_0}^2 \vspace{1ex}\\
&\geq & \frac{\eta_{*}b_0(k+1)(k+2)}{4}\norms{G_{\gamma}(x_k)}^2 - \frac{b_0}{\eta_{*}}\norms{x^{\star} + \gamma B(x^{\star}) - u_0}^2.
\end{array}
\end{equation}
From \eqref{eq:eEAG_02_convergence1_proof1}, by induction and $\frac{b_i}{\beta_i} = b_0(i+1)(i+2)$, we obtain
\begin{equation}\label{eq:eEAG_02_convergence1_proof4}
\tilde{\Vc}_k \leq \tilde{\Vc}_k + b_0\gamma \sum_{i=0}^{k-1}(i+1)(i+2)\norms{G_{\gamma}(x_{i+1}) - G_{\gamma}(x_i)}^2 \leq \tilde{\Vc}_0.
\end{equation}
Using $u_0 = x_0 + \gamma B(x_0)$, let us bound $\tilde{\Vc}_0$ as follows:
\begin{equation}\label{eq:eEAG_02_convergence1_proof5}
\hspace{-0.0ex}
\arraycolsep=0.2em
\begin{array}{rrll}
& \tilde{\Vc}_0 = a_0\norms{G_{\gamma}(x_0)}^2 + b_0\iprods{G_{\gamma}(x_0), x_0 + \gamma B(x_0) - u_0}  & = &  b_0\eta_0\norms{G_{\gamma}(x_0) - G_{\gamma}(x^{\star})}^2 \vspace{1ex}\\
& & \leq & b_0\eta_0M\norms{x_0 - x^{\star}}^2, \vspace{1ex}\\
& \norms{x^{\star} + \gamma B(x^{\star}) - u_0} &= & \norms{x^{\star} - x_0 + \gamma(B(x^{\star}) - B(x_0))} \vspace{1ex}\\
& & \leq & (1 + \gamma L)\norms{x_0 - x^{\star}}.
\end{array}
\hspace{-2.0ex}
\end{equation}
Finally, combining \eqref{eq:eEAG_02_convergence1_proof3}, \eqref{eq:eEAG_02_convergence1_proof4}, and \eqref{eq:eEAG_02_convergence1_proof5}, rearranging the result, and using $a_k = \frac{b_0(k+1)(k+2)\eta_k}{2} \geq \frac{b_0(k+1)(k+2)\eta_{*}}{2}$, we obtain \eqref{eq:eEAG_02_bound12}.
\Eproof
\end{proof}
%%% End of proof.

%%%% 3.2. The splitting anchored Popov method.
\beforesubsec
\subsection{\textbf{The splitting past extra-anchored  gradient method}}\label{subsec:sa_Popov_method}
\aftersubsec
%%% A. The derivation of our scheme.
\paragraph{\textbf{The derivation of our scheme.}}
As we have observed, each iteration of \eqref{eq:eEAG_02c} requires two evaluations of $J_{\gamma A}$ and  two evaluations of $J_{\gamma B}$.
To reduce the computation at each iteration, we propose a modification of  \eqref{eq:eEAG_02} relying on Popov's past extra-gradient method as in Section~\ref{sec:a_PP_method}.
We call this method a splitting past extra-anchored  gradient scheme.

More precisely, our new scheme starts from $y_{-1} := x_0$ and $u_0 := x_0 + \gamma B(x_0)$ for some $x_0\in\R^p$ and consists of the following two main steps:
\begin{equation}\label{eq:mEAG_03}
\arraycolsep=0.3em
\left\{\begin{array}{lcl}
y_k &:= & x_k + \beta_k(u_0 - x_k) - \eta_k G_{\gamma}(y_{k-1}) - \gamma B(y_k) + \gamma(1-\beta_k)B(y_{k-1}), \vspace{1ex}\\
x_{k+1} & := & x_k + \beta_k(u_0 - x_k) - \eta_kG_{\gamma}(y_k) - \gamma B(y_k) + \gamma(1-\beta_k)B(y_{k-1}),
\end{array}\right.
\end{equation}
where $\beta_k \in (0, 1)$ and $\eta_k > 0$ are given parameters which will be determined later.

%%% B. Implementable version.
\beforepara
\paragraph{\textbf{Implementable version.}}
While the second line of \eqref{eq:mEAG_03} is a forward (or explicit) step, the first line remains implicit.
We can write the first line of \eqref{eq:mEAG_03} as $(\Id + \gamma B)(y_k) = x_k + \beta_k(u_0 - x_k) - \eta_k G_{\gamma}(y_{k-1}) + \gamma(1-\beta_k)B(y_{k-1})$.
Using the resolvent $J_{\gamma B}$ of $B$, we can rewrite \eqref{eq:mEAG_03} into the following implementable version:
\begin{equation}\label{eq:mEAG_03b}
\arraycolsep=0.3em
\left\{\begin{array}{lcl}
v_k &:= & x_k + \beta_k(u_0 - x_k) - \eta_k G_{\gamma}(y_{k-1}) + \gamma(1-\beta_k)B(y_{k-1}), \vspace{1ex}\\
y_k &:= & J_{\gamma B}(v_k), \vspace{1ex}\\
x_{k+1} & := & x_k + \beta_k(u_0 - x_k) - \eta_kG_{\gamma}(y_k) - \gamma B(y_k) + \gamma (1-\beta_k)B(y_{k-1}).
\end{array}\right.
\end{equation}
Unlike our previous scheme \eqref{eq:eEAG_02}, this scheme only requires one evaluation $G_{\gamma}(y_k)$, consisting of one $J_{\gamma A}$ and one $B(y_k)$, and one resolvent operation $J_{\gamma B}(v_k)$.
Moreover, \eqref{eq:mEAG_03b} is implementable compared to \eqref{eq:mEAG_03}.

%%% C. DR interpretation
\beforepara
\paragraph{\textbf{DR splitting interpretation.}}
Similar to \eqref{eq:eEAG_02c}, we can eliminate the computation of $B(y_k)$ in \eqref{eq:mEAG_03b}, to obtain the following equivalent scheme:
%let us further eliminate from \eqref{eq:mEAG_03}.
%\begin{itemize}
%\itemsep=-0.15em
%\item Firstly, since $y_k = J_{\gamma B}(v_k)$, similar to \eqref{eq:eEAG_02c}, we have $\gamma B(y_k) = \gamma B(J_{\gamma B}(v_k)) = v_k - y_k$.
%Hence, if we define $z_k := J_{\gamma A}(y_k - \gamma B(y_k)) = J_{\gamma A}(2y_k - v_k)$, then $G_{\gamma}(y_k) = \frac{1}{\gamma}(y_k - z_k)$.
%
%\item Next, if we define $u_k := x_k + \gamma B(y_{k-1})$, then the first line of  \eqref{eq:mEAG_03b} is equivalent to $v_k = u_k + \beta_k(u_0 - u_k) - \frac{\eta_k}{\gamma}(y_{k-1} - z_{k-1})$.
%This leads to
%\begin{equation*}
%(1-\beta_k)u_k = v_k + \tfrac{\eta_k}{\gamma}(y_{k-1} - z_{k-1}) - \beta_ku_0 \ \ \text{and} \ \ v_{k+1} = \beta_{k+1}u_0 + (1-\beta_{k+1})u_{k+1} - \tfrac{\eta_{k+1}}{\gamma}(y_k - z_k).
%\end{equation*}
%
%\item Then, the second line of \eqref{eq:mEAG_03b} becomes $u_{k+1} = \beta_ku_0 + (1-\beta_k)u_k - \frac{\eta_k}{\gamma}(y_k - z_k)$.
%Combining this expression and the last two ones, we get $v_{k+1} = \beta_{k+1}u_0 + (1-\beta_{k+1})v_k - \frac{1}{\gamma}\left[\eta_{k+1}(y_k - z_k) + (1-\beta_{k+1})\eta_k\left[(y_k - z_k) - (y_{k-1} - z_{k-1})\right] \right]$.
%\item Finally, we have $x_k = u_k - \gamma B(y_{k-1}) = y_{k-1} + \frac{1}{1-\beta_k}v_k - v_{k-1} + \frac{\eta_k}{1-\beta_k}(y_{k-1} - z_k) - \frac{\beta_k}{1-\beta_k}u_0$.
%\end{itemize}
%In summary, we can rewrite \eqref{eq:mEAG_03b} into the following form:
\begin{equation}\label{eq:mEAG_03c}
\hspace{-0.02ex}
\arraycolsep=0.1em
\left\{\begin{array}{lcl}
y_k &:= & J_{\gamma B}(v_k), \vspace{1ex}\\
z_k &:= & J_{\gamma B}(2y_k - v_k), \vspace{1ex}\\
\Delta_k &:= & y_k - z_k, \vspace{1ex}\\
v_{k+1} &:= & v_k +  \beta_{k+1}(u_0  - v_k) - \frac{1}{\gamma}\left[\eta_{k+1}\Delta_k + (1-\beta_{k+1})\eta_k\left(\Delta_k - \Delta_{k-1}\right) \right], \vspace{1ex}\\
x_k &:= & y_{k-1} + \frac{v_k}{1-\beta_k} - v_{k-1} + \frac{\eta_k}{1-\beta_k}\Delta_{k-1} - \frac{\beta_ku_0}{1-\beta_k}.
%g_k &:= & \frac{1}{\gamma}(y_k - J_{\gamma A}(2y_k - v_k)), \vspace{1ex}\\
%v_{k+1} & := & \beta_{k+1}u_0 + (1-\beta_{k+1})\big(v_k - \eta_k(g_k - g_{k-1})\big) - \eta_{k+1}g_k, \vspace{1ex}\\
%x_{k+1} &:= & y_k - \eta_k(g_k - g_{k-1}).
\end{array}\right.
\hspace{-4ex}
\end{equation}
Clearly, we can also view this scheme as an accelerated variant of the DR splitting method (see Section~\ref{subsec:derivation_of_ADR} below).
It essentially has the same per-iteration complexity as the standard DR splitting method.
Note that the DR scheme is convergent under only the maximal monotonicity of $A$ and $B$, while \eqref{eq:mEAG_03c} requires $B$ to be Lipschitz continuous.
However, again, \eqref{eq:mEAG_03c} is an accelerated method.

%%% B. One-iteration analysis
\beforepara
\paragraph{\textbf{One-iteration analysis.}}
To analyze convergence of \eqref{eq:mEAG_03}, we consider the following potential function:
\begin{equation}\label{eq:mEAG_03_L_func}
\hat{\Vc}_k := a_k\norms{G_{\gamma}(x_k)}^2 + b_k\iprods{G_{\gamma}(x_k), x_k + \gamma B(y_{k-1}) - u_0} + c_k\norms{x_k - y_{k-1}}^2,
\end{equation}
where $a_k$, $b_k$, and $c_k$ are three positive parameters which will be determined later.

%%% Lemma 3.3.
\begin{lemma}\label{le:mEAG_03_key_est1}
Let $A$ and $B$ in \eqref{eq:MI2} be maximally monotone, and $B$ be single-valued and $L$-Lipschitz continuous.
Let  $N := \frac{(1 + \gamma L)^2}{\gamma^2}$ and $M := 2(N + d_0)$ for some constants $\gamma > 0$ and $d_0 > 0$.
Let $\sets{(x_k, y_k, v_k)}$ be generated by \eqref{eq:mEAG_03b} such that the parameters $\beta_k \in (0, 1)$, $\eta_k$, $a_k$, and $b_k$ satisfy $b_{k+1} = \frac{b_k}{1 - \beta_k}$, $a_k = \frac{b_k\eta_k}{2\beta_k}$, and 
\begin{equation}\label{eq:mEAG_03_cond2}
\arraycolsep=0.2em
\left\{\begin{array}{lcl}
M\eta_k^2 &\leq & 1, \vspace{1ex}\\
\frac{1}{M\eta_k\beta_k}  & \geq & \frac{\eta_{k+1}}{(1-\beta_k)\beta_{k+1}}, \vspace{1ex}\\
 \frac{\left(1  - M \eta_k^2\right) }{ M \eta_k \beta_k} \left( \frac{1}{M\eta_k\beta_k} - \frac{\eta_{k+1}}{(1-\beta_k)\beta_{k+1}} \right) & \geq &  \left( \frac{ 1-\beta_k - M\eta_k^2 }{M(1-\beta_k)\beta_k\eta_k} \right)^2.
\end{array}\right.
\end{equation}
Then, $\hat{\Vc}_k$ defined by \eqref{eq:mEAG_03_L_func} satisfies:
\begin{equation}\label{eq:mEAG_03_key_est1}
\arraycolsep=0.2em
\begin{array}{lcl}
\hat{\Vc}_k -  \hat{\Vc}_{k+1} & \geq & \left(\frac{d_0b_k}{2M\beta_k\eta_k} - \frac{\gamma L^2b_k}{2\beta_k} - c_{k+1}\right)\norms{x_{k+1} - y_k}^2 \vspace{1ex}\\
&& + {~} \left(c_k - \frac{(\gamma L^2 + N\eta_k)b_k}{2\beta_k}\right) \norms{x_k - y_{k-1}}^2.
\end{array}
\end{equation}

\end{lemma}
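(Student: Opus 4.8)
The plan is to reproduce the one-iteration analysis of Lemma~\ref{le:A_Popov_method} (the monotone-equation Popov scheme) while grafting in the forward-backward coercivity used in Lemma~\ref{le:main_result2}. The natural first move is to introduce the auxiliary sequence $u_k := x_k + \gamma B(y_{k-1})$, so that the inner-product term of $\hat{\Vc}_k$ in \eqref{eq:mEAG_03_L_func} becomes $b_k\iprods{G_{\gamma}(x_k), u_k - u_0}$, and to recall that $v_k = y_k + \gamma B(y_k)$ since $y_k = J_{\gamma B}(v_k)$. A short computation on \eqref{eq:mEAG_03} (absorbing the $\beta_k(u_0 - x_k)$ and $\gamma(1-\beta_k)B(y_{k-1})$ contributions into $(1-\beta_k)u_k$) shows that $\sets{u_k}$ obeys the Popov recursion $u_{k+1} = u_k + \beta_k(u_0 - u_k) - \eta_kG_{\gamma}(y_k)$, together with $v_k = u_k + \beta_k(u_0 - u_k) - \eta_kG_{\gamma}(y_{k-1})$. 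Subtracting the two lines of \eqref{eq:mEAG_03} also gives the residual identity $x_{k+1} - y_k = -\eta_k(G_{\gamma}(y_k) - G_{\gamma}(y_{k-1}))$, exactly as in the last line of \eqref{eq:a_PP_proof0}. I then record the two equivalent forms of $u_{k+1} - u_k$ as in \eqref{eq:proof10_est1}.

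The crux is to replace the clean coercivity step \eqref{eq:proof10_est2} by a perturbed version. Since the scheme evaluates $B$ at the \emph{past} points $y_{k-1},y_k$ rather than at $x_k,x_{k+1}$, we only have $u_{k+1} = x_{k+1} + \gamma B(y_k)$ and $u_k = x_k + \gamma B(y_{k-1})$, hence $x_{k+1} + \gamma B(x_{k+1}) = u_{k+1} + \gamma(B(x_{k+1}) - B(y_k))$ and $x_k + \gamma B(x_k) = u_k + \gamma(B(x_k) - B(y_{k-1}))$. Substituting these into the gradient-mapping inequality \eqref{eq:key_est1_of_G} at $(x,y) = (x_{k+1},x_k)$ yields
\[
\iprods{G_{\gamma}(x_{k+1}) - G_{\gamma}(x_k),\, u_{k+1} - u_k} \geq \gamma\norms{G_{\gamma}(x_{k+1}) - G_{\gamma}(x_k)}^2 - \mathcal{E}_k,
\]
where $\mathcal{E}_k := \gamma\iprods{G_{\gamma}(x_{k+1}) - G_{\gamma}(x_k),\, (B(x_{k+1}) - B(y_k)) - (B(x_k) - B(y_{k-1}))}$ is a correction absent from Lemma~\ref{le:main_result2}. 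I would then substitute the two forms of $u_{k+1} - u_k$, multiply through by $b_k/\beta_k$, and collect terms exactly as in \eqref{eq:proof10_est3}--\eqref{eq:proof10_est5}; the only new ingredient is the extra summand $-\tfrac{b_k}{\beta_k}\mathcal{E}_k$.

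Splitting $\mathcal{E}_k$ into its two pieces and applying Cauchy--Schwarz, the $L$-Lipschitz continuity of $B$, and Young's inequality (each with weight chosen to spend one half of the available coercivity surplus) produces
\[
-\tfrac{b_k}{\beta_k}\mathcal{E}_k \geq -\tfrac{b_k\gamma}{\beta_k}\norms{G_{\gamma}(x_{k+1}) - G_{\gamma}(x_k)}^2 - \tfrac{b_k\gamma L^2}{2\beta_k}\norms{x_{k+1} - y_k}^2 - \tfrac{b_k\gamma L^2}{2\beta_k}\norms{x_k - y_{k-1}}^2.
\]
The first term on the right cancels the $\tfrac{b_k\gamma}{\beta_k}\norms{G_{\gamma}(x_{k+1}) - G_{\gamma}(x_k)}^2$ coercivity gain, and what survives is precisely the two $-\tfrac{\gamma L^2 b_k}{2\beta_k}$ contributions appearing in both coefficients of \eqref{eq:mEAG_03_key_est1}. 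This bookkeeping is the main obstacle: one must verify that the surplus $\gamma\norms{G_{\gamma}(x_{k+1}) - G_{\gamma}(x_k)}^2$ is \emph{exactly} enough to absorb both halves of $\mathcal{E}_k$ while depositing the residual penalties with the right constants. Getting the Young weights and signs right here is what distinguishes this past-point (Popov) analysis from the cleaner extra-gradient one, where no such mismatch arises.

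The remainder is a faithful copy of Lemma~\ref{le:A_Popov_method} under the substitutions $G \mapsto G_{\gamma}$, $L^2 \mapsto N$ (the squared Lipschitz constant of $G_{\gamma}$ from Lemma~\ref{le:basic_properties}(b)), and slack $\theta L^2 \mapsto d_0$. Using $x_{k+1} - y_k = -\eta_k(G_{\gamma}(y_k) - G_{\gamma}(y_{k-1}))$ and $\norms{G_{\gamma}(x_{k+1}) - G_{\gamma}(y_k)}^2 \leq N\norms{x_{k+1} - y_k}^2$, I add $d_0\norms{x_{k+1}-y_k}^2$ to the Lipschitz estimate so that the coefficient of $\norms{G_{\gamma}(y_k) - G_{\gamma}(y_{k-1})}^2$ becomes $\tfrac{M}{2}\eta_k^2$ with $M = 2(N + d_0)$, then expand $\norms{G_{\gamma}(y_k) - G_{\gamma}(y_{k-1})}^2$ through the midpoint $G_{\gamma}(x_k)$ as in the display preceding \eqref{eq:Popov_scheme_01_proof3}. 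Multiplying the resulting inequality by $a_k/(M\eta_k^2)$ and adding it to the estimate above, all $G_{\gamma}$-valued terms assemble into the quadratic form with entries $S_k^{11},S_k^{22},S_k^{12}$ identical to Lemma~\ref{le:A_Popov_method}; under the three requirements of \eqref{eq:mEAG_03_cond2} — which are exactly $S_k^{22}\geq 0$, $S_k^{11}\geq 0$, and $\sqrt{S_k^{11}S_k^{22}} \geq S_k^{12}$ — this form is bounded below by a perfect square and dropped. Tracking the two residual coefficients, using $a_k = \tfrac{b_k\eta_k}{2\beta_k}$ so that $a_k d_0/(M\eta_k^2) = d_0 b_k/(2M\beta_k\eta_k)$ and $a_k N = N\eta_k b_k/(2\beta_k)$, then yields \eqref{eq:mEAG_03_key_est1}.
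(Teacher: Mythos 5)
Your proposal is correct and follows essentially the same route as the paper's own proof: the same perturbed coercivity step with error term $\gamma\iprods{G_{\gamma}(x_{k+1})-G_{\gamma}(x_k),\,e_{k+1}-e_k}$ absorbed by Young's inequality at the cost of the full coercivity surplus plus the two $\tfrac{\gamma L^2 b_k}{2\beta_k}$ penalties, the same $d_0$-slack Lipschitz estimate yielding $M=2(N+d_0)$, and the same quadratic-form argument with entries matching Lemma~\ref{le:A_Popov_method}. The only cosmetic differences are that you name the auxiliary sequence $u_k := x_k + \gamma B(y_{k-1})$ explicitly and split the error inner product into two pieces before applying Young (the paper applies Young once to the difference and then bounds $\norms{e_{k+1}-e_k}^2 \leq 2\norms{e_{k+1}}^2 + 2\norms{e_k}^2$), which produces identical constants.
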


%%%%%% Proof of Lemma 3.3.
\begin{proof}
From  $x_{k+1} = x_k + \beta_k(u_0 - x_k) - \eta_kG_{\gamma}(y_k) - \gamma B(y_k) + \gamma(1-\beta_k)B(y_{k-1})$ of \eqref{eq:mEAG_03}, we can show that
\begin{equation}\label{eq:mEAG_03_proof1}
\arraycolsep=0.3em
\left\{\begin{array}{lcl}
x_{k+1} - x_k + \gamma (B(y_k) - B(y_{k-1})) & = &  \beta_k(u_0 - x_k - \gamma B(y_{k-1})) - \eta_kG_{\gamma}(y_k),  \vspace{1ex}\\
x_{k+1} - x_k + \gamma (B(y_k) - B(y_{k-1})) & = & \frac{\beta_k}{1-\beta_k}(u_0 - x_{k+1} - \gamma B(y_k)) - \frac{\eta_k}{1-\beta_k}G_{\gamma}(y_k).
\end{array}\right.
\end{equation}
Using $x= x_{k+1}$ and $y = x_k$ in \eqref{eq:key_est1_of_G}, we get
\begin{equation*}
\iprods{G_{\gamma}(x_{k+1}) - G_{\gamma}(x_k), x_{k+1} - x_k + \gamma(B(x_{k+1}) - B(x_k))} \geq \gamma\norms{G_{\gamma}(x_{k+1}) - G_{\gamma}(x_k)}^2.
\end{equation*}
This estimate can be written as
\begin{equation*}
\arraycolsep=0.1em
\begin{array}{ll}
\iprods{G_{\gamma}(x_{k+1}) & - {~} G_{\gamma}(x_k), x_{k+1} - x_k  + \gamma (B(y_k) - B(y_{k-1}))}  \geq  \gamma\norms{G_{\gamma}(x_{k+1}) - G_{\gamma}(x_k)}^2 \vspace{1ex}\\
& - {~} \gamma \iprods{G_{\gamma}(x_{k+1}) - G_{\gamma}(x_k), e_{k+1} - e_k},
\end{array}
\end{equation*}
where $e_{k+1} := B(x_{k+1}) - B(y_k)$ and $e_k := B(x_k) - B(y_{k-1})$.

\noindent
Rearranging the last inequality and using Young's inequality $-\gamma \iprods{w, z} \geq -\gamma\norms{w}^2 - \frac{\gamma}{4}\norms{z}^2$ for the last term, we obtain
\begin{equation*}
\arraycolsep=0.3em
\begin{array}{lcl}
\iprods{G_{\gamma}(x_{k+1}), x_{k+1} - x_k  + \gamma (B(y_k) - B(y_{k-1}))} &\geq & \iprods{G_{\gamma}(x_{k}), x_{k+1} - x_k  + \gamma (B(y_k) - B(y_{k-1}))}  \vspace{1ex}\\
&& - {~} \frac{\gamma}{4}\norms{e_{k+1} - e_k}^2.
\end{array}
\end{equation*}
Substituting \eqref{eq:mEAG_03_proof1} into the above inequality, we get
\begin{equation*}
\arraycolsep=0.3em
\begin{array}{lcl}
R_k &:= & \beta_k\iprods{G_{\gamma}(x_k),  x_k + \gamma B(y_{k-1}) - u_0} - \frac{\beta_k}{1-\beta_k}\iprods{G_{\gamma}(x_{k+1}),  x_{k+1} + \gamma B(y_k) - u_0} \vspace{1ex}\\
& \geq &  \frac{\eta_k}{1-\beta_k}\iprods{G_{\gamma}(x_{k+1}), G_{\gamma}(y_k)} - \eta_k\iprods{G_{\gamma}(x_{k}), G_{\gamma}(y_k)} - \frac{\gamma}{4}\norms{e_{k+1} - e_k}^2.
\end{array}
\end{equation*}
Since $b_{k+1} = \frac{b_k}{1-\beta_k}$, multiplying the last estimate by $\frac{b_k}{\beta_k}$, we obtain
\begin{equation}\label{eq:mEAG_03_proof2}
\arraycolsep=0.3em
\begin{array}{lcl}
\frac{b_k}{\beta_k}R_k &:= & b_k\iprods{G_{\gamma}(x_k), x_k + \gamma B(y_{k-1}) - u_0} - b_{k+1}\iprods{G_{\gamma}(x_{k+1}),  x_{k+1} + \gamma B(y_k) - u_0} \vspace{1ex}\\
& \geq &  \frac{b_k\eta_k}{\beta_k(1-\beta_k)}\iprods{G_{\gamma}(x_{k+1}), G_{\gamma}(y_k)} - \frac{b_k\eta_k}{\beta_k}\iprods{G_{\gamma}(x_{k}), G_{\gamma}(y_k)} - \frac{\gamma b_k}{4\beta_k}\norms{e_{k+1} - e_k}^2.
\end{array}
\end{equation}
Now, using the definition of $\hat{\Vc}_k$ from \eqref{eq:mEAG_03_L_func}, we can show that
\begin{equation}\label{eq:mEAG_03_proof3}
\arraycolsep=0.15em
\hspace{-5ex}
\begin{array}{lcl}
\hat{\Vc}_k - \hat{\Vc}_{k+1} &= & a_k\norms{G_{\gamma}(x_k)}^2 - a_{k+1}\norms{G_{\gamma}(x_{k+1})}^2 + b_k\iprods{G_{\gamma}(x_k), x_k + \gamma B(y_{k-1}) - u_0} \vspace{1ex}\\
&& - {~} b_{k+1}\iprods{G_{\gamma}(x_{k+1}), x_{k+1} + \gamma B(y_k) - u_0}  + c_k\norms{x_k - y_{k-1}}^2 - c_{k+1}\norms{x_{k+1} - y_k}^2\vspace{1ex}\\
&\overset{\tiny\eqref{eq:mEAG_03_proof2}}{\geq} & a_k\norms{G_{\gamma}(x_k)}^2 - a_{k+1}\norms{G_{\gamma}(x_{k+1})}^2 +   \frac{b_k\eta_k}{\beta_k(1-\beta_k)}\iprods{G_{\gamma}(x_{k+1}), G_{\gamma}(y_k)}  \vspace{1ex}\\
&& -  {~} \frac{b_k\eta_k}{\beta_k}\iprods{G_{\gamma}(x_{k}), G_{\gamma}(y_k)} - \frac{\gamma b_k}{4\beta_k}\norms{e_{k+1} - e_k}^2 \vspace{1ex}\\
&&  + {~} c_k\norms{x_k - y_{k-1}}^2 - c_{k+1}\norms{x_{k+1} - y_k}^2 \vspace{1ex}\\
& = & a_k\norms{G_{\gamma}(x_k)}^2 - a_{k+1}\norms{G_{\gamma}(x_{k+1})}^2 +   b_{k+1}\eta_k\iprods{G_{\gamma}(x_{k+1}), G_{\gamma}(y_k)}  - \frac{\gamma b_k}{4\beta_k}\norms{e_{k+1} - e_k}^2  \vspace{1ex}\\
&& +  {~} \frac{b_k\eta_k}{\beta_k}\iprods{G_{\gamma}(x_{k+1}) - G_{\gamma}(x_{k}), G_{\gamma}(y_k)} + c_k\norms{x_k - y_{k-1}}^2 - c_{k+1}\norms{x_{k+1} - y_k}^2.
\end{array}
\hspace{-15ex}
\end{equation}
Next, since $e_k = B(x_k) - B(y_{k-1})$ and $B$ is $L$-Lipschitz continuous, we can easily show that
\begin{equation}\label{eq:mEAG_03_proof4}
\arraycolsep=0.2em
\begin{array}{lcl}
\frac{1}{2}\norms{e_{k+1} - e_k}^2 & \leq & \norms{e_{k+1}}^2 + \norms{e_k}^2 = \norms{B(x_{k+1}) - B(y_k)}^2 + \norms{B(x_k) - B(y_{k-1})}^2 \vspace{1ex}\\
& \leq & L^2\left[\norms{x_{k+1} - y_k}^2 + \norms{x_k - y_{k-1}}^2\right].
\end{array}
\end{equation}
Substituting \eqref{eq:mEAG_03_proof4} into \eqref{eq:mEAG_03_proof3}, we have
\begin{equation}\label{eq:mEAG_03_proof3b}
\arraycolsep=0.2em
\begin{array}{lcl}
\hat{\Vc}_k - \hat{\Vc}_{k+1} & \geq & a_k\norms{G_{\gamma}(x_k)}^2 - a_{k+1}\norms{G_{\gamma}(x_{k+1})}^2 +   b_{k+1}\eta_k\iprods{G_{\gamma}(x_{k+1}), G_{\gamma}(y_k)}   \vspace{1ex}\\
&& +  {~} \frac{b_k\eta_k}{\beta_k}\iprods{G_{\gamma}(x_{k+1}) - G_{\gamma}(x_{k}), G_{\gamma}(y_k)} + \left( c_k - \frac{\gamma L^2 b_k}{2\beta_k}\right)\norms{x_k - y_{k-1}}^2 \vspace{1ex}\\
&& - {~}  \left( c_{k+1} + \frac{\gamma L^2 b_k}{2\beta_k}\right) \norms{x_{k+1} - y_k}^2.
\end{array}
\end{equation}
From the first line $y_k = x_k + \beta_k(u_0 - x_k) - \eta_k G_{\gamma}(y_{k-1}) - \gamma B(y_k) + \gamma(1-\beta_k)B(y_{k-1})$ of \eqref{eq:mEAG_03}, we have $x_{k+1} - y_k = -\eta_k(G_{\gamma}(y_k) - G_{\gamma}(y_{k-1}))$.
Using the Lipschitz continuity of $G_{\gamma}$ from Lemma~\ref{le:basic_properties}(b) and of $B$, we can upper bound $\norms{x_{k+1} - y_k}^2$ as follows:
\begin{equation*}
\arraycolsep=0.3em
\begin{array}{lcl}
\norms{x_{k+1} - y_k}^2 &= & \eta_k^2\norms{G_{\gamma}(y_k) - G_{\gamma}(y_{k-1})}^2 \vspace{1ex}\\
&\leq & 2\eta_k^2\norms{G_{\gamma}(y_k) - G_{\gamma}(x_k)}^2 + 2\eta_k^2\norms{G_{\gamma}(x_k) - G_{\gamma}(y_{k-1})}^2 \vspace{1ex}\\
&\leq & 2\eta_k^2\left[\norms{G_{\gamma}(x_k)}^2 - 2\iprods{G_{\gamma}(x_k), G_{\gamma}(y_k)} + \norms{G_{\gamma}(y_k)}^2 \right] \vspace{1ex}\\
&& + {~} \frac{2(1 + \gamma L)^2\eta_k^2}{\gamma^2}\norms{x_k - y_{k-1}}^2.
\end{array}
\end{equation*}
Given $d_0 > 0$, using the Lipschitz continuity of $G_{\gamma}$ in Lemma~\ref{le:basic_properties}(b), let us upper bound the following term:
\begin{equation*}
\arraycolsep=0.3em
\begin{array}{lcl}
\norms{G_{\gamma}(x_{k+1}) - G_{\gamma}(y_k)}^2  & + & d_0\norms{x_{k+1} - y_k}^2 \leq  \frac{(1+ \gamma L)^2 + d_0\gamma^2}{\gamma^2}\norms{x_{k+1} - y_k}^2 \vspace{1ex}\\
&\leq & \frac{2\eta_k^2 \left[ (1 + \gamma L)^2 + d_0\gamma^2\right]}{\gamma^2}\left[\norms{G_{\gamma}(x_k)}^2 - 2\iprods{G_{\gamma}(x_k), G_{\gamma}(y_k)} + \norms{G_{\gamma}(y_k)}^2 \right] \vspace{1ex}\\
&& + {~} \frac{2\left[(1+ \gamma L)^2 + d_0\gamma^2\right](1+ \gamma L)^2\eta_k^2}{\gamma^4}\norms{x_k - y_{k-1}}^2 \vspace{1ex}\\
&= & M\eta_k^2\left[\norms{G_{\gamma}(x_k)}^2 - 2\iprods{G_{\gamma}(x_k), G_{\gamma}(y_k)} + \norms{G_{\gamma}(y_k)}^2 \right] \vspace{1ex}\\
&& + {~} MN\eta_k^2\norms{x_k - y_{k-1}}^2,
\end{array}
\end{equation*}
where $N := \frac{(1+\gamma L)^2}{\gamma^2}$ and $M := \frac{2[(1+\gamma L)^2 + d_0\gamma^2]}{\gamma^2} = 2(N + d_0)$.
This inequality is equivalent to 
\begin{equation*}
\arraycolsep=0.3em
\begin{array}{lcl}
\Tc_{[2]} &:= & \norms{G_{\gamma}(x_{k+1})}^2  + \left(1 - M\eta_k^2 \right)\norms{G_{\gamma}(y_k)}^2 - M\eta_k^2 \norms{G_{\gamma}(x_k)}^2  + d_0\norms{x_{k+1} - y_k}^2  \vspace{1ex}\\
&& - {~} 2\left(1 -  M\eta_k^2\right) \iprods{G_{\gamma}(x_{k+1}), G_{\gamma}(y_k)} - 2M\eta_k^2 \iprods{G_{\gamma}(x_{k+1}) - G_{\gamma}(x_k), G_{\gamma}(y_k)} \vspace{1ex}\\
&&  - {~}  MN\eta_k^2\norms{x_k - y_{k-1}}^2 \vspace{1ex}\\
&\leq & 0.
\end{array}
\end{equation*}
Multiplying this inequality by $\frac{a_k}{M\eta_k^2}$ and adding the result to \eqref{eq:mEAG_03_proof3b}, we get
\begin{equation}\label{eq:mEAG_03_proof5}
\hspace{0.0ex}
\arraycolsep=0.2em
\begin{array}{lcl}
\hat{\Vc}_k - \hat{\Vc}_{k+1} &\geq & \left( \frac{a_k}{M\eta_k^2}  - a_{k+1} \right)\norms{G_{\gamma}(x_{k+1})}^2  + \frac{a_k(1 - M\eta_k^2)}{M\eta_k^2} \norms{G_{\gamma}(y_k)}^2 \vspace{1ex}\\
&& - {~} 2\left(\frac{a_k(1 - M\eta_k^2)}{M\eta_k^2} - \frac{b_{k+1}\eta_k}{2}\right) \iprods{G_{\gamma}(x_{k+1}), G_{\gamma}(y_k)}  \vspace{1ex}\\
&& + {~} \left( \frac{b_k\eta_k}{\beta_k} - 2a_k \right)\iprods{G_{\gamma}(x_{k+1}) - G_{\gamma}(x_k), G_{\gamma}(y_k)}  \vspace{1ex}\\
&& + {~}  \left(\frac{d_0a_k}{M\eta_k^2} - c_{k+1} - \frac{\gamma L^2b_k}{2\beta_k}\right)\norms{x_{k+1} - y_k}^2 \vspace{1ex}\\
&& + {~}  \left( c_k - \frac{\gamma L^2b_k}{2\beta_k} - N a_k \right)\norms{x_k - y_{k-1}}^2.
\end{array}
\hspace{-1.5ex}
\end{equation}
As before, let us choose $a_k = \frac{b_k\eta_k}{2\beta_k}$ and define
\begin{equation*}
\arraycolsep=0.3em
\left\{\begin{array}{lclcl}
\hat{S}_k^{11} &= & \frac{a_k}{M\eta_k^2}  - a_{k+1} &= & \frac{b_k}{2}\left(\frac{1}{M\eta_k\beta_k} - \frac{\eta_{k+1}}{(1-\beta_k)\beta_{k+1}}\right), \vspace{1ex}\\
\hat{S}_k^{22} &= &  \frac{a_k(1 - M\eta_k^2)}{M\eta_k^2} &= &  \frac{b_k(1 - M\eta_k^2)}{2M\beta_k\eta_k},  \vspace{1ex}\\
\hat{S}_k^{12} &= &  \frac{a_k(1 - M\eta_k^2)}{M\eta_k^2} - \frac{b_{k+1}\eta_k}{2} &= &  \frac{b_k(1 - \beta_k - M\eta_k^2)}{2M(1-\beta_k)\beta_k\eta_k},  \vspace{1ex}\\
P_k &:= & \frac{d_0a_k}{M\eta_k^2} - c_{k+1} - \frac{\gamma L^2b_k}{2\beta_k} &= & \frac{d_0b_k}{2M\beta_k\eta_k} - \frac{\gamma L^2b_k}{2\beta_k} - c_{k+1},  \vspace{1ex}\\
Q_k &:= & c_k - \frac{\gamma L^2b_k}{2\beta_k} - Na_k & = & c_k - \frac{(\gamma L^2 + N\eta_k)b_k}{2\beta_k}.
\end{array}\right.
\end{equation*}
Using $a_k = \frac{b_k\eta_k}{2\beta_k}$ and these quantities, \eqref{eq:mEAG_03_proof5} reduces to 
\begin{equation*} 
\arraycolsep=0.2em
\begin{array}{lcl}
\hat{\Vc}_k - \hat{\Vc}_{k+1} &\geq  &  \hat{S}_k^{11} \norms{G_{\gamma}(x_{k+1})}^2  +  \hat{S}_k^{22}  \norms{G_{\gamma}(y_k)}^2  - 2\hat{S}_k^{12}  \iprods{G_{\gamma}(x_{k+1}), G_{\gamma}(y_k)} +  P_k\norms{x_{k+1} - y_k}^2 \vspace{1ex}\\
&& + {~} Q_k \norms{x_k - y_{k-1}}^2.
\end{array}
\end{equation*}
If we impose $\hat{S}_k^{11}  \geq 0$, $\hat{S}_k^{22} \geq 0$, and $(\hat{S}_k^{12})^2  \leq \hat{S}_k^{11} \hat{S}_k^{22}$, then the last estimate becomes
\begin{equation*} 
\hspace{-0ex}
\arraycolsep=0.3em
\begin{array}{lcl}
\hat{\Vc}_k -  \hat{\Vc}_{k+1} &\geq &  \norms{ \sqrt{\hat{S}_k^{11} }  G_{\gamma}(x_{k+1}) - \sqrt{\hat{S}_k^{22}} G_{\gamma}(y_k)}^2  +   P_k\norms{x_{k+1} - y_k}^2 + Q_k \norms{x_k - y_{k-1}}^2,
\end{array}
\hspace{-1.5ex}
\end{equation*}
which proves \eqref{eq:mEAG_03_key_est1}.
Moreover, $\hat{S}_k^{22}  \geq 0$, $\hat{S}_k^{11} \geq 0$, and $(\hat{S}_k^{12})^2 \leq \hat{S}_k^{11} \hat{S}_k^{22}$ are guaranteed if three conditions in \eqref{eq:mEAG_03_cond2} hold.
\Eproof
\end{proof}
%%% End of the proof.

%%%% Lemma 3.4
\begin{lemma}\label{le:mEAG_03_para_update}
Let  $N := \frac{(1 + \gamma L)^2}{\gamma^2}$ and $M := 4N$.
Let  $a_k$, $b_k$, $c_k$, $\beta_k$, and $\eta_k$ be updated by
\begin{equation}\label{eq:mEAG_03_para_update}
\left\{ \begin{array}{lcl}
\beta_k &:= & \frac{1}{k+2}, \quad b_{k+1} := \frac{b_k}{1 - \beta_k} = b_0(k+2), \quad a_k := \frac{b_k\eta_k}{2\beta_k}, \vspace{1ex}\\
\eta_{k+1} &:= & \frac{\eta_k\beta_{k+1}\left(1 - M\eta_k^2 - \beta_k^2\right)}{\beta_k(1-\beta_k)\left(1 - M\eta_k^2\right)}, \qquad  \quad c_k := \frac{b_k}{2\beta_k}\left(\gamma L^2 + N\eta_k \right), 
\end{array}\right.
\end{equation}
where $\eta_0 \in \left(0, \bar{\eta}\right]$ with $\bar{\eta} := \frac{1}{2\sqrt{3N}}$.

Then, $a_k$, $b_k$, $\beta_k$, and $\eta_k$ satisfy three conditions of \eqref{eq:mEAG_03_cond2} in Lemma~\ref{le:mEAG_03_key_est1}.
Moreover, $\sets{\eta_k}$ is non-increasing and $\lim_{k\to\infty}\eta_k = \eta_{*}$ exists and the limit $\eta_{*} > 0$.
If we choose $\eta_0$ such that
\begin{equation}\label{eq:mEAG_03_para_cond2}
\begin{array}{l}
0 < \eta_0 \leq \min\set{\bar{\eta}_0, \frac{1}{2\sqrt{3N}}}, \quad \text{where} \quad \bar{\eta}_0 := \frac{1}{2(4\gamma L^2 + \sqrt{16\gamma^2L^4 + 3N})}, 
\end{array}
\end{equation}
then $\hat{\Vc}_k$ defined by \eqref{eq:mEAG_03_L_func} satisfies $\hat{\Vc}_{k+1} \leq \hat{\Vc}_k$ for all $k\geq 0$.
\end{lemma}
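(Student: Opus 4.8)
The plan is to reduce every claim to results already established for the accelerated Popov scheme of Section~\ref{sec:a_PP_method} and then to collapse the Lyapunov descent into a single scalar inequality in $\eta_0$. First I observe that the recursion for $\eta_{k+1}$ in \eqref{eq:mEAG_03_para_update} is \emph{verbatim} the recursion \eqref{eq:eta_sequence} of Lemma~\ref{le:eta_limit} with the constant $M = 4N$. Since $\bar\eta = \frac{1}{2\sqrt{3N}} < \frac{1}{2\sqrt{2N}} = \frac{1}{\sqrt{2M}}$, the hypothesis $0 < \eta_0 \le \bar\eta$ meets the requirement of Lemma~\ref{le:eta_limit}; hence $\{\eta_k\}$ is non-increasing with $\eta_k \le \eta_0$ and $\eta_* := \lim_{k\to\infty}\eta_k > \underline\eta > 0$. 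This settles the monotonicity and positivity claims immediately.

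Next I verify the three conditions in \eqref{eq:mEAG_03_cond2}, mirroring the verification in the proof of Theorem~\ref{th:a_PP_convergence1}. For the first condition, $\eta_k \le \eta_0 \le \frac{1}{2\sqrt{3N}}$ gives $M\eta_k^2 = 4N\eta_k^2 \le \frac13 \le 1$; in particular $M\eta_k^2 + \beta_k^2 \le \frac13 + \frac14 < 1$, which keeps the $\eta$-recursion well defined. After clearing a factor $\beta_k^2$, the third condition is exactly the inequality $\sqrt{S_k^{11}S_k^{22}} \ge S_k^{12}$ analysed at the end of the proof of Lemma~\ref{le:A_Popov_method}; that analysis shows it is equivalent, under $M\eta_k^2 + \beta_k^2 < 1$, to $\eta_{k+1} \le \frac{\eta_k\beta_{k+1}(1 - M\eta_k^2 - \beta_k^2)}{\beta_k(1-\beta_k)(1-M\eta_k^2)}$, and the update \eqref{eq:mEAG_03_para_update} attains this bound with equality, so the third condition holds. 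For the second condition, rewrite it as $\eta_{k+1} \le \frac{(1-\beta_k)\beta_{k+1}}{M\eta_k\beta_k}$; using $\eta_{k+1} \le \eta_k$ it suffices that $\eta_k^2 \le \frac{(1-\beta_k)\beta_{k+1}}{M\beta_k} = \frac{k+1}{M(k+3)}$, whose right-hand side is minimized at $k=0$ with value $\frac{1}{3M} = \frac{1}{12N} = \bar\eta^2$; since $\eta_k \le \eta_0 \le \bar\eta$ this holds for every $k$.

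Finally, for the Lyapunov descent I invoke Lemma~\ref{le:mEAG_03_key_est1}. Matching $M = 2(N + d_0) = 4N$ forces the free constant $d_0 = N$. With the choice $c_k = \frac{b_k}{2\beta_k}(\gamma L^2 + N\eta_k)$ the coefficient $Q_k = c_k - \frac{(\gamma L^2 + N\eta_k)b_k}{2\beta_k}$ of $\norms{x_k - y_{k-1}}^2$ in \eqref{eq:mEAG_03_key_est1} vanishes identically, so it only remains to prove that the coefficient $P_k = \frac{d_0 b_k}{2M\beta_k\eta_k} - \frac{\gamma L^2 b_k}{2\beta_k} - c_{k+1}$ of $\norms{x_{k+1}-y_k}^2$ is nonnegative. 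Substituting $d_0 = N$, $M = 4N$, $b_{k+1} = b_k/(1-\beta_k)$, and $\beta_k = 1/(k+2)$, and factoring out $\frac{b_k(k+2)}{2} > 0$, I reduce $P_k \ge 0$ to $\frac{1}{4\eta_k} \ge \gamma L^2 + \frac{k+3}{k+1}(\gamma L^2 + N\eta_{k+1})$. The hard part — and really the only genuine computation — is making this inequality uniform in $k$: bounding $\frac{1}{4\eta_k} \ge \frac{1}{4\eta_0}$, $\frac{k+3}{k+1} \le 3$ (maximal at $k=0$), and $\eta_{k+1} \le \eta_0$, it suffices that $\frac{1}{4\eta_0} \ge 4\gamma L^2 + 3N\eta_0$, i.e. $12N\eta_0^2 + 16\gamma L^2\eta_0 - 1 \le 0$. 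The positive root of the corresponding quadratic is, after rationalizing the conjugate surd, exactly $\bar\eta_0 = \frac{1}{2(4\gamma L^2 + \sqrt{16\gamma^2 L^4 + 3N})}$, so the hypothesis $0 < \eta_0 \le \min\{\bar\eta_0, \frac{1}{2\sqrt{3N}}\}$ of \eqref{eq:mEAG_03_para_cond2} guarantees $P_k \ge 0$. Together with $Q_k = 0$, the estimate \eqref{eq:mEAG_03_key_est1} then yields $\hat\Vc_k - \hat\Vc_{k+1} \ge 0$, completing the proof.
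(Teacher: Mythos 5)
Your proof is correct and follows essentially the same route as the paper's: invoke Lemma~\ref{le:eta_limit} for monotonicity and the positive limit of $\sets{\eta_k}$, verify the three conditions of \eqref{eq:mEAG_03_cond2} by noting the update attains the third one with equality and the other two follow from $\eta_k \leq \eta_0 \leq \frac{1}{\sqrt{3M}}$, choose $c_k$ so that $Q_k = 0$, and reduce $P_k \geq 0$ to the quadratic inequality $12N\eta_0^2 + 16\gamma L^2 \eta_0 - 1 \leq 0$, whose positive root is exactly $\bar{\eta}_0$. The only cosmetic difference is that you fix $d_0 = N$ from the outset (as forced by $M = 4N = 2(N+d_0)$), whereas the paper carries a general $d_0$ through the computation and specializes to $d_0 = N$ at the end.
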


%%% Proof of Lemma 3.4.
\begin{proof}
The third condition \eqref{eq:mEAG_03_cond2} is equivalent to 
\begin{equation}\label{eq:mEAG_03_eta_update} 
0 < \eta_{k+1} \leq \frac{\eta_k\beta_{k+1}\left( 1 - M\eta_k^2 - \beta_k^2\right)}{\beta_k(1-\beta_k)\left(1 - M\eta_k^2\right)},
\end{equation}
provided that $M\eta_k^2 \leq 1 - \beta_k^2$.
Since we update $\eta_k$ as in \eqref{eq:mEAG_03_para_update}, \eqref{eq:mEAG_03_eta_update} is obviously satisfied.
As proved in Lemma~\ref{le:eta_limit}, $\sets{\eta_k}$ is nonincreasing and its limit exists.
If $\eta_0 < \frac{1}{\sqrt{2M}}$, then $\eta_{*} := \lim_{k\to\infty}\eta_k \geq \underline{\eta} > 0$.
If we choose $\eta_0 < \frac{1}{\sqrt{2M}}$, then $M\eta_k^2 \leq 1 - \beta_k^2$ obviously holds and therefore $M\eta_k^2 < 1$, which satisfies the first condition of \eqref{eq:mEAG_03_cond2}.
The second condition \eqref{eq:mEAG_03_cond2} is equivalent to $\eta_{k+1}\eta_k \leq \frac{\beta_{k+1}(1-\beta_k)}{M\eta_k\beta_k}$.
This condition holds if $\eta_0 \leq \frac{1}{\sqrt{3M}}$.
Overall, three conditions of \eqref{eq:mEAG_03_cond2} hold if $0 < \eta_0 \leq \frac{1}{\sqrt{3M}}$ and the update \eqref{eq:mEAG_03_para_update} is used.

Now, let us choose 
\begin{equation*}
c_k := \frac{b_k}{2\beta_k}\left(\gamma L^2 + N\eta_k \right) = \frac{b_0}{2}\left(\gamma L^2 + N\eta_k \right) (k+1)(k+2).
\end{equation*}
Then, $Q_k := c_k - \frac{(\gamma L^2 + N\eta_k)b_k}{2\beta_k} = 0$. 
To guarantee $P_k := \frac{d_0b_k}{2M\beta_k\eta_k} - \frac{\gamma L^2b_k}{2\beta_k} - c_{k+1}  \geq 0$, we need
\begin{equation*}
\frac{d_0}{M\eta_k} \geq \gamma L^2 + \left(\frac{k+3}{k+1}\right)(\gamma L^2 + N\eta_k).
\end{equation*}
Since $\sets{\eta_k}$ is nonincreasing, this condition holds for all $k\geq 0$ if $\frac{d_0}{M\eta_0} \geq 3N\eta_0 + 4\gamma L^2$, which is equivalent to
\begin{equation*}
0 < \eta_0 \leq \bar{\eta}_0 := \frac{d_0}{4\gamma L^2(N + d_0) + \sqrt{16\gamma^2L^4(N + d_0)^2 + 6N(N+d_0)d_0}}.
\end{equation*}
Let us choose $d_0 := N$, then  $\bar{\eta}_0$ reduces to $\bar{\eta}_0 := \frac{1}{2(4\gamma L^2 + \sqrt{16\gamma^2L^4 + 3N})}$.
Combining this condition and $\eta_0 \leq \frac{1}{\sqrt{3M}} = \frac{1}{2\sqrt{3N}}$, we obtain \eqref{eq:mEAG_03_para_cond2}.
Moreover, since $d_0 = N$, $M := 4N$.

Finally, under the choice of $\eta_0$ as in \eqref{eq:mEAG_03_para_cond2} we have  $Q_k = 0$, and $P_k\geq0$.
Consequently,  we obtain $\hat{\Vc}_{k+1} \leq \hat{\Vc}_k$ for all $k\geq 0$ from  \eqref{eq:mEAG_03_key_est1}.
\Eproof
\end{proof}
%%%% End of Proof.

%%%% C. Convergence guarantee.
\beforepara
\paragraph{\textbf{Convergence guarantee.}}
Now, we  establish the convergence of the scheme \eqref{eq:mEAG_03}.

%%% Theorem 3.4.
\begin{theorem}\label{th:mEAG_03_convergence1}
Let $A$ and $B$ of \eqref{eq:MI2} be maximally monotone, and $B$ be single-valued and $L$-Lipschitz continuous.
Let $\sets{(x_k, y_k, v_k)}$ be generated by \eqref{eq:mEAG_03} to approximate a zero point of \eqref{eq:MI2} using $y_{-1} := x_0$ and $u_0 := x_0 + \gamma B(x_0)$, and the update rules \eqref{eq:mEAG_03_para_update} of parameters in Lemma~\ref{le:mEAG_03_para_update}.
Then, we have the following statements:
\begin{equation}\label{eq:mEAG_03_bound12}
\arraycolsep=0.3em
\left\{\begin{array}{lcl}
\norms{G_{\gamma}(x_k)}^2  & \leq &  \dfrac{4C_{*}\norms{x_0 - x^{\star}}^2}{\eta_{*}(k+1)(k+2)},  \vspace{1.5ex} \\
\norms{x_k - y_{k-1}}^2  & \leq & \dfrac{4C_{*}\norms{x_0 - x^{\star}}^2}{\left[ 2(\gamma L^2 + N\eta_{*})(k+2) - \gamma \right] (k+1)},
\end{array}\right.
\end{equation}
where $N := \frac{(1 + \gamma L)^2}{\gamma^2}$ and $C_{*} := \frac{N(\eta_0\eta_{*}\gamma^2 + 1)}{\gamma^2\eta_{*}}$.
\end{theorem}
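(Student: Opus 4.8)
The plan is to follow the same three-step template used in the proof of Theorem~\ref{th:eEAG_02_convergence1}: (i) invoke the one-iteration descent to conclude that the potential $\hat{\Vc}_k$ is non-increasing; (ii) lower bound $\hat{\Vc}_k$ by a positive combination of $\norms{G_{\gamma}(x_k)}^2$ and $\norms{x_k - y_{k-1}}^2$ up to an additive term proportional to $\norms{x_0 - x^{\star}}^2$; and (iii) upper bound $\hat{\Vc}_0$ and then chain $\hat{\Vc}_k \leq \hat{\Vc}_0$. For step (i), Lemma~\ref{le:mEAG_03_para_update} already guarantees, under the stated $\eta_0$, that the conditions \eqref{eq:mEAG_03_cond2} hold, that $Q_k = 0$ and $P_k \geq 0$ in \eqref{eq:mEAG_03_key_est1}, and hence that $\hat{\Vc}_{k+1} \leq \hat{\Vc}_k$. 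The same lemma together with Lemma~\ref{le:eta_limit} supplies the explicit forms $b_k = b_0(k+1)$, $a_k = \frac{b_0\eta_k(k+1)(k+2)}{2}$, and $c_k = \frac{b_0}{2}(\gamma L^2 + N\eta_k)(k+1)(k+2)$, as well as $\eta_k \geq \eta_{*} > 0$, which I will use to turn coefficient expressions into concrete $k$-dependent bounds.

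The crux is step (ii), and the only genuinely new difficulty relative to Theorem~\ref{th:eEAG_02_convergence1} is that the linear term of $\hat{\Vc}_k$ in \eqref{eq:mEAG_03_L_func} carries $\gamma B(y_{k-1})$ rather than $\gamma B(x_k)$. I would therefore split
\[
x_k + \gamma B(y_{k-1}) - u_0 = \big(x_k + \gamma B(x_k) - u_0\big) + \gamma\big(B(y_{k-1}) - B(x_k)\big),
\]
handle the first group exactly as in \eqref{eq:eEAG_02_convergence1_proof2} by inserting $G_{\gamma}(x^{\star}) = 0$, applying \eqref{eq:key_est1_of_G} to gain a $\gamma b_k\norms{G_{\gamma}(x_k)}^2$ term, and using Young's inequality to absorb $b_k\iprods{G_{\gamma}(x_k), x^{\star}+\gamma B(x^{\star}) - u_0}$ into $-\frac{a_k}{2}\norms{G_{\gamma}(x_k)}^2 - \frac{b_k^2}{2a_k}\norms{x^{\star}+\gamma B(x^{\star}) - u_0}^2$. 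For the extra group $\gamma b_k\iprods{G_{\gamma}(x_k), B(y_{k-1}) - B(x_k)}$ I would apply Young's inequality in the form $\iprods{u,v} \geq -\norms{u}^2 - \tfrac14\norms{v}^2$ so that its $\norms{G_{\gamma}(x_k)}^2$ contribution is cancelled \emph{exactly} by the coercivity gain $\gamma b_k\norms{G_{\gamma}(x_k)}^2$, while its remaining piece is controlled by the $L$-Lipschitz continuity of $B$, $\norms{B(y_{k-1}) - B(x_k)}^2 \leq L^2\norms{x_k - y_{k-1}}^2$, producing a negative multiple of $\norms{x_k - y_{k-1}}^2$. This is precisely the term that the coefficient $c_k$ was engineered to dominate, leaving $\hat{\Vc}_k \geq \frac{a_k}{2}\norms{G_{\gamma}(x_k)}^2 + \big(c_k - \tfrac{\gamma b_k L^2}{4}\big)\norms{x_k - y_{k-1}}^2 - \frac{b_k^2}{2a_k}\norms{x^{\star}+\gamma B(x^{\star}) - u_0}^2$.

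For step (iii) I would use $y_{-1} = x_0$ and $u_0 = x_0 + \gamma B(x_0)$, which make both the linear term and the $\norms{x_0 - y_{-1}}^2$ term of $\hat{\Vc}_0$ vanish, so that $\hat{\Vc}_0 = a_0\norms{G_{\gamma}(x_0)}^2 = b_0\eta_0\norms{G_{\gamma}(x_0) - G_{\gamma}(x^{\star})}^2 \leq b_0\eta_0 N\norms{x_0 - x^{\star}}^2$ by Lemma~\ref{le:basic_properties}(b). The residual term is controlled through $\norms{x^{\star}+\gamma B(x^{\star}) - u_0} \leq (1+\gamma L)\norms{x_0 - x^{\star}}$ and $\frac{b_k^2}{2a_k} = \frac{b_k\beta_k}{\eta_k} \leq \frac{b_0}{\eta_{*}}$. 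Chaining $\hat{\Vc}_k \leq \hat{\Vc}_0$, substituting the explicit $a_k$ and $c_k$, and using $\eta_k \geq \eta_{*}$ then yields the claimed $\BigO{1/k^2}$ bounds, with the constant $C_{*}$ emerging from the sum of the $\hat{\Vc}_0$ bound and the Young residual. The main obstacle is the bookkeeping in step (ii): one must choose the Young split for the $B(y_{k-1}) - B(x_k)$ cross term so that its gradient contribution is exactly offset by $\gamma b_k\norms{G_{\gamma}(x_k)}^2$ and its residual is no larger than what $c_k$ can absorb, since a looser split would spoil the clean denominator $2(\gamma L^2 + N\eta_{*})(k+2) - \gamma$ appearing in the second estimate.
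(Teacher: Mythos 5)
Your proposal is correct and follows essentially the same route as the paper's own proof: invoke Lemma~\ref{le:mEAG_03_para_update} to get $\hat{\Vc}_k \leq \hat{\Vc}_0$, split $x_k + \gamma B(y_{k-1}) - u_0$ into $(x_k + \gamma B(x_k) - u_0) + \gamma(B(y_{k-1}) - B(x_k))$, use \eqref{eq:key_est1_of_G} at $x^{\star}$ plus Young's inequality so the coercivity gain $\gamma b_k\norms{G_{\gamma}(x_k)}^2$ exactly cancels the cross-term contribution, and bound $\hat{\Vc}_0$ via $y_{-1}=x_0$, $u_0 = x_0 + \gamma B(x_0)$, and the Lipschitz continuity of $G_{\gamma}$. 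The only discrepancy is cosmetic: your Young-plus-Lipschitz split correctly leaves the coefficient $c_k - \tfrac{\gamma b_k L^2}{4}$, which yields the denominator $2(\gamma L^2 + N\eta_{*})(k+2) - \gamma L^2$ rather than the stated $2(\gamma L^2 + N\eta_{*})(k+2) - \gamma$; the paper's own proof drops this $L^2$ factor (writing $c_k - \tfrac{\gamma b_k}{4}$), so your intermediate estimate is in fact the more careful one, and the mismatch with the theorem's displayed constant is a typo-level issue inherited from the paper rather than a gap in your argument.
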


%%% Proof of Theorem 3.4.
\begin{proof}
First, since $\hat{\Vc}_{k+1} \leq \hat{\Vc}_k$ due to Lemma~\ref{le:mEAG_03_key_est1}, by induction, we have $\hat{\Vc}_k \leq \hat{\Vc}_0$. 
Hence $a_k\norms{G_{\gamma}(x_k)}^2 + c_k\norms{x_k - y_{k-1}}^2 + b_k\iprods{G_{\gamma}(x_k), x_k + \gamma B(y_{k-1}) - u_0} \leq \hat{\Vc}_0$.
As proved previously, we can easily show that
\begin{equation*}
\arraycolsep=0.3em
\begin{array}{lcl}
b_k\iprods{G_{\gamma}(x_k), x_k + \gamma B(y_{k-1}) - u_0} &\geq & - \frac{a_k}{2}\norms{G_{\gamma}(x_k)}^2 - \frac{b_k^2}{2a_k}\norms{x^{\star} + \eta B(x^{\star}) - u_0}^2 \vspace{1ex}\\
&& - {~} \frac{\gamma b_k}{4}\norms{x_k - y_{k-1}}^2.
\end{array}
\end{equation*}
Using this estimate and $\hat{\Vc}_k$ in \eqref{eq:mEAG_03_L_func}, we have
\begin{equation*}
\arraycolsep=0.3em
\begin{array}{lcl}
\hat{\Vc}_k &= & a_k\norms{G_{\gamma}(x_k)}^2 + b_k\iprods{G_{\gamma}(x_k), x_k + \gamma B(y_{k-1}) - u_0} + c_k\norms{x_k - y_{k-1}}^2 \vspace{1ex}\\
&\geq & \frac{a_k}{2}\norms{G_{\gamma}(x_k)}^2 + \left(c_k - \frac{\gamma b_k}{4}\right)\norms{x_k - y_{k-1}}^2 -  \frac{b_k^2}{2a_k}\norms{x^{\star} + \eta B(x^{\star}) - u_0}^2 \vspace{1ex}\\
&\geq & \frac{\eta_{*}b_0(k+1)(k+2)}{4}\norms{G_{\gamma}(x_k)}^2 + \frac{b_0}{4}(k+1)\left[ 2(\gamma L^2 + N\eta_k)(k+2) - \gamma\right] \norms{x_k - y_{k-1}}^2 \vspace{1ex}\\
&& - {~}  \frac{b_0}{\eta_{*}}\norms{x^{\star} + \eta B(x^{\star}) - u_0}^2.
\end{array}
\end{equation*}
Here, we have used the update rules of $\beta_k$, $a_k$, $b_k$, and $c_k$, and $\eta_k \geq \eta_{*} > 0$.
Combining the last inequality and $\hat{\Vc}_k \leq \hat{\Vc}_0$, and noting that $\eta_k \geq \eta_{*}$, we obtain
\begin{equation*}
\arraycolsep=0.2em
\begin{array}{lcl}
\textrm{RHS} &:= & \frac{\eta_{*}b_0}{4}(k+1)(k+2) \norms{G_{\gamma}(x_k)}^2 +  \frac{b_0}{4}(k+1)\left[ 2(\gamma L^2 + N\eta_{*})(k+2) - \gamma\right] \norms{x_k - y_{k-1}}^2 \vspace{1ex}\\
& \leq & \hat{\Vc}_0 + \frac{b_0}{\eta_{*}}\norms{x^{\star} + \gamma B(x^{\star}) - u_0}^2.
\end{array}
\end{equation*}
Since $y_{-1} = x_0$ and $u_0 := x_0 + \gamma B(x_0)$, we have $\hat{\Vc}_0 = a_0\norms{G_{\gamma}(x_0)}^2 + b_0 \iprods{G_{\gamma}(x_0), x_0 + \gamma B(x_0) - u_0} = b_0\eta_0\norms{G_{\gamma}(x_0)}^2 \leq b_0\eta_0N^2\norms{x_0 - x^{\star}}^2$.
Moreover, $\norms{x^{\star} + \gamma B(x^{\star}) - u_0}^2 \leq (1 + \gamma L)^2\norms{x_0 - x^{\star}}^2$. 
Substituting these estimates into the last inequality, we get \eqref{eq:mEAG_03_bound12}.
\Eproof
\end{proof}
%%% End of the proof.

%%%%%%%%%%%%%%%%%%%%%%%%%%%%%%%%%%%%%%%
%%% 5. Halpern-Type Accelerated Douglas-Rachford Splitting Method.
%%%%%%%%%%%%%%%%%%%%%%%%%%%%%%%%%%%%%%%
\beforesec
\section{Halpern-Type Accelerated Douglas-Rachford Splitting Method}\label{sec:aDR_methods}
\aftersec
\paragraph{\textbf{Overview and motivation.}}
The Douglas-Rachford  (DR) splitting scheme is perhaps one of the most common algorithms to solve \eqref{eq:MI2}.
This method is proposed in \cite{douglas1956numerical} and intensively studied in several works, including \cite{bauschke2017douglas,Davis2014,eckstein1992douglas,Lions1979,svaiter2011weak}, just to name a few.
Under only maximal monotonicity of $A$ and $B$, the iterate sequence generated by a DR splitting scheme converges to a solution of \eqref{eq:MI2}, see, e.g., \cite{Lions1979,svaiter2011weak}.
The first accelerated DR splitting method is perhaps due to \cite{patrinos2014douglas}, which minimizes the sum of a convex and a quadratic function.
However, it is unclear how to extend this method to general settings such as monotone inclusions.

Recently, Kim developed an accelerated proximal point method for maximally monotone inclusions of the form \eqref{eq:MI_2o} in \cite{kim2021accelerated} using a performance estimation problem approach \cite{drori2014performance}.
This scheme covers the DR splitting method as a special case since it can be written in the form of proximal-point method \cite{eckstein1992douglas,Facchinei2003}.
However, as shown in \cite[Corollary 6.2]{kim2021accelerated}, the $\BigO{1/k}$-convergence rate is on an intermediate squared residual $\norms{u_k - \Tc^{\textrm{DR}}_{\gamma}(u_k)}$, but not on the shadow sequence $\sets{x_k}$ with $x_k = J_{\gamma B}(u_k)$, where $\Tc^{\mathrm{DR}}_{\gamma}$ is the fixed-point DR operator defined by \eqref{eq:DR_scheme1} below.
This motivates us to develop an accelerated variant of the DR splitting scheme using Halpern-type iteration and with a direct analysis where the convergence rate is on $\norms{G_{\gamma}(x_k)}$ of the shadow last-iterate sequence $\sets{x_k}$.
Note that the convergence of the shadow sequence $\sets{x_k}$ has been studied in the literature, but only in asymptotic sense \cite{Bauschke2011}.

Unlike two schemes in Section~\ref{subsec:EAG_extension}, we develop an accelerated variant of the Douglas-Rachford splitting method by utilizing the Halpern-type idea in \cite{halpern1967fixed} and the Lyapunov analysis in \cite{diakonikolas2020halpern,yoon2021accelerated} without requiring the Lipschitz continuity of $B$.

%%% 5.1. The derivation of accelerated DR splitting method
\beforesubsec
\subsection{\textbf{The derivation of accelerated DR splitting method}}\label{subsec:derivation_of_ADR}
\aftersubsec

%%% A. Equivalent representations of DR splitting scheme.
%\beforepara
\paragraph{\textbf{Equivalent expressions of DR splitting scheme.}}\label{subsec:DR_scheme}
The common form of the standard Douglas-Rachford splitting scheme  \cite{douglas1956numerical,Lions1979} for solving \eqref{eq:MI2} is written as
%\begin{equation}\label{eq:DR_scheme0}
\myeq{eq:DR_scheme0}{
\arraycolsep=0.3em
\left\{\begin{array}{lcl}
x_k & := & \prox_{\gamma B}(u_k), \vspace{1ex}\\
\hat{v}_k & := & \prox_{\gamma A}(2x_k - u_k), \vspace{1ex}\\
u_{k+1} &:= & u_k + \hat{v}_k - x_k.
\end{array}\right.
}
%\end{equation}
First, we can rewrite \eqref{eq:DR_scheme0}  as follows by combining three lines in one:
%\begin{equation}\label{eq:DR_scheme1}
\myeq{eq:DR_scheme1}{
u_{k+1} := \Tc^{\mathrm{DR}}_{\gamma}(u_k) \equiv u_k + J_{\gamma A}\left( 2J_{\gamma B}(u_k) - u_k\right) - J_{\gamma B}(u_k).
}
%\end{equation}
Using the identity $\gamma B(J_{\gamma B}(u)) = u - J_{\gamma B}(u)$ and assume that  $B$ is single-valued, we can further rewrite \eqref{eq:DR_scheme1} as
\begin{equation}\label{eq:DR_scheme}
x_{k+1} := J_{\gamma B}\left( J_{\gamma A}(x_k - \gamma B(x_k)) + \gamma B(x_k) \right).
\end{equation}
The DR splitting scheme \eqref{eq:DR_scheme1} only requires one resolvent $J_{\gamma A}$ and one resolvent $J_{\gamma B}$ without any evaluation of $B$ as in \eqref{eq:DR_scheme}.
However, the convergence of \eqref{eq:DR_scheme1} is often on $\sets{u_k}$, an intermediate sequence, compared to $\sets{x_k}$ as in \eqref{eq:DR_scheme}, the shadow sequence that converges to $x^{\star} \in \zer{G}$.
Note that $\sets{u_k}$ converges to $u^{\star}$ such that $x^{\star} = J_{\gamma B}(u^{\star})$ is a solution of \eqref{eq:MI2}.

Now, let us consider the fixed-point mapping of the iterate scheme \eqref{eq:DR_scheme} as
\begin{equation*}
\Tc_{\gamma}(x) := J_{\gamma B}\left( J_{\gamma A}(x - \gamma B(x)) + \gamma B(x) \right).
\end{equation*}
If we recall the forward-backward mapping $G_{\gamma}(x) := \frac{1}{\gamma}(x - J_{\gamma A}(x - \gamma B(x))$ of \eqref{eq:MI2} from \eqref{eq:grad_mapping}, then we have $J_{\gamma A}(x - \gamma B(x)) = x - \gamma G_{\gamma}(x)$, and therefore $\Tc_{\gamma}(x) = J_{\gamma B}(x - \gamma (G_{\gamma}(x) - B(x)) )$.
Clearly, the update $x_{k+1} = \Tc_{\gamma}(x_k)$ in \eqref{eq:DR_scheme} becomes $x_{k+1} + \gamma B(x_{k+1}) = x_k + \gamma B(x_k) - \gamma G_{\gamma}(x_k)$.
Let  us introduce $u_k := x_k + \gamma B(x_k)$.
Then, $x_k = J_{\gamma B}(u_k)$. 
Moreover, \eqref{eq:DR_scheme} becomes $u_{k+1} = u_k - \gamma G_{\gamma}(x_k)$.
Using this expression, we can write \eqref{eq:DR_scheme} equivalently to
\begin{equation}\label{eq:DR_scheme_eq} 
\arraycolsep=0.3em
\left\{\begin{array}{lcl}
u_{k+1} &:= & u_k - \gamma G_{\gamma}(x_k), \vspace{1ex}\\
x_{k+1} &:= & J_{\gamma B}(u_{k+1}).
\end{array}\right.
\end{equation}
This scheme uses both forward and backward operators of $B$.

We highlight that, as shown in \cite{eckstein1992douglas}, $M_{\gamma} = (\Tc^{\mathrm{DR}}_{\gamma})^{-1} - \Id$ is maximally monotone and $J_{M_{\gamma}} = \Tc^{\mathrm{DR}}_{\gamma}$, which is firmly nonexpansive.
Hence, one can directly apply the Halpern fixed-point scheme to $\Tc^{\mathrm{DR}}_{\gamma}$ as $u_{k+1} = \beta_ku_0 + (1-\beta_k)\Tc^{\mathrm{DR}}_{\gamma}(u_k)$.
However, the convergence rate guarantee will be on $\norms{u_k - \Tc^{\mathrm{DR}}_{\gamma}(u_k)}$ as in \cite{kim2021accelerated} instead of $\norms{G_{\gamma}(x_k)}$.

%%% B. The new accelerated DR splitting scheme
\beforepara
\paragraph{\textbf{The new accelerated DR splitting scheme.}}
Now, inspired by \cite{diakonikolas2020halpern,halpern1967fixed,kim2021accelerated}, we aim at developing an accelerated DR splitting scheme by modifying \eqref{eq:DR_scheme_eq} instead of \eqref{eq:DR_scheme1} as follows:
Starting from $x_0\in\R^p$, we set $u_0 := x_0 + \gamma B(x_0)$ and at each iteration $k\geq 0$, we update
\begin{equation}\label{eq:ADR_scheme_01} 
\arraycolsep=0.3em
\left\{\begin{array}{lcl}
u_{k+1} &:= & u_k + \beta_k(u_0 - u_k) - \eta_k G_{\gamma}(x_k), \vspace{1ex}\\
x_{k+1} &:= & J_{\gamma B}(u_{k+1}).
\end{array}\right.
\end{equation}
To avoid the evaluation of $B(x_k)$, we note that since $\gamma B(J_{\gamma B}(u_k)) = u_k - J_{\gamma B}(u_k)$ and $x_k = J_{\gamma B}(u_k)$, we have $\gamma B(x_k) = u_k - x_k$.
Hence, we can write  $G_{\gamma}(x_k) = \frac{1}{\gamma}(x_k - J_{\gamma A}(x_k - \gamma B(x_k))) = \frac{1}{\gamma}(x_k - J_{\gamma A}(2x_k - u_k))$.
Consequently, we can rewrite \eqref{eq:ADR_scheme_01} equivalently to
\begin{equation}\label{eq:ADR_scheme_01b} 
\arraycolsep=0.2em
\left\{\begin{array}{lcl}
x_k &:= & J_{\gamma B}(u_k), \vspace{1ex}\\
v_k &:= & J_{\gamma A}(2x_k - u_k), \vspace{1ex}\\
u_{k+1} &:= & \beta_ku_0 + (1-\beta_k)u_k  + \frac{\eta_k}{\gamma}(v_k - x_k).
\end{array}\right.
\end{equation}
Clearly, \eqref{eq:ADR_scheme_01b}  has essentially the same per-iteration complexity as in the standard Douglas-Rachford splitting method \eqref{eq:DR_scheme0} with one $J_{\gamma B}$ and one $J_{\gamma A}$ per iteration.
In addition, it does not requires $B$ to be single-valued as in \eqref{eq:ADR_scheme_01}.
Note that only the third line of \eqref{eq:ADR_scheme_01b} is different from the standard DR splitting scheme \eqref{eq:DR_scheme0}.
If $\beta_k = 0$ and $\frac{\eta_k}{\gamma} = 1$, then \eqref{eq:ADR_scheme_01b}  coincides with \eqref{eq:DR_scheme0}.
This new scheme is also different from \eqref{eq:mEAG_03c} above at the last line.

%%% 5.3. Convergence analysis
\beforesubsec
\subsection{\textbf{Convergence analysis}}\label{subsec:s5_convergence}
\aftersubsec
We first prove a key estimate to analyze the convergence of \eqref{eq:ADR_scheme_01} in the following lemma.

%%% Lemma 4.1.
\begin{lemma}\label{le:ADR_key_est_01}
Let $\sets{(x_k, u_k)}$ be generated by \eqref{eq:ADR_scheme_01} and $\Lc_k$ be a Lyapunov function defined as
\begin{equation}\label{eq:ADR_L_func}
\Lc_k := a_k\norms{G_{\gamma}(x_k)}^2 + b_k\iprods{G_{\gamma}(x_k), u_k - u_0}.
\end{equation}
Let the parameters $\gamma > 0$, $a_k$, $b_k$, $\beta_k$ and $\eta_k$ are chosen such that $2\gamma(1-\beta_k^2) > \eta_k$ and
\begin{equation}\label{eq:ADR_para_cond_01}
\hspace{-1ex}
\beta_k \in (0, 1), \ \  b_{k+1} := \frac{b_k}{1-\beta_k}, \ \ a_k := \frac{b_k\eta_k}{2\beta_k}, \ \text{and} \ 0 < \eta_{k+1} \leq \frac{\beta_{k+1}\left[ 2\gamma(1-\beta_k^2) - \eta_k\right]\eta_k}{\beta_k(1-\beta_k)(2\gamma - \eta_k)}.
\hspace{-1ex}
\end{equation}
Then, we have $\Lc_{k+1} \leq \Lc_k$ for all $k\geq 0$.
\end{lemma}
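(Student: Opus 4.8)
The plan is to follow the one-iteration template already used for Lemma~\ref{le:main_result2}, but in the simpler setting where there is no extra-gradient point and no Lipschitz continuity is invoked. First I would extract from \eqref{eq:ADR_scheme_01} the two equivalent expressions for the increment, namely $u_{k+1} - u_k = \beta_k(u_0 - u_k) - \eta_k G_{\gamma}(x_k)$ and $u_{k+1} - u_k = \frac{\beta_k}{1-\beta_k}(u_0 - u_{k+1}) - \frac{\eta_k}{1-\beta_k}G_{\gamma}(x_k)$, the second obtained by solving the update for $u_k$. Since $x_k = J_{\gamma B}(u_k)$ gives $u_k = x_k + \gamma B(x_k)$ and likewise $u_{k+1} = x_{k+1} + \gamma B(x_{k+1})$, applying the coercivity-type estimate \eqref{eq:key_est1_of_G} of Lemma~\ref{le:basic_properties} with $x = x_{k+1}$ and $y = x_k$ yields the cornerstone inequality $\iprods{G_{\gamma}(x_{k+1}) - G_{\gamma}(x_k),\, u_{k+1} - u_k} \geq \gamma\norms{G_{\gamma}(x_{k+1}) - G_{\gamma}(x_k)}^2$, exactly as in \eqref{eq:proof10_est2}.

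Next I would substitute the two increment formulas into this inequality, using the first for the $G_{\gamma}(x_k)$ component and the second for the $G_{\gamma}(x_{k+1})$ component, multiply through by $\frac{b_k}{\beta_k}$, and invoke $b_{k+1} = \frac{b_k}{1-\beta_k}$ to produce the telescoping bound $b_k\iprods{G_{\gamma}(x_k), u_k - u_0} - b_{k+1}\iprods{G_{\gamma}(x_{k+1}), u_{k+1} - u_0} \geq \frac{b_k\gamma}{\beta_k}\norms{G_{\gamma}(x_{k+1}) - G_{\gamma}(x_k)}^2 + \frac{b_k\eta_k}{\beta_k(1-\beta_k)}\iprods{G_{\gamma}(x_{k+1}), G_{\gamma}(x_k)} - \frac{b_k\eta_k}{\beta_k}\norms{G_{\gamma}(x_k)}^2$. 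Feeding this into the definition \eqref{eq:ADR_L_func} of $\Lc_k - \Lc_{k+1}$, expanding $\norms{G_{\gamma}(x_{k+1}) - G_{\gamma}(x_k)}^2$, and using $a_k = \frac{b_k\eta_k}{2\beta_k}$ (so that $\frac{b_k\eta_k}{\beta_k} = 2a_k$) to absorb the bare $\norms{G_{\gamma}(x_k)}^2$ term, I expect everything to collapse into a single quadratic form $\Lc_k - \Lc_{k+1} \geq S_k^{11}\norms{G_{\gamma}(x_{k+1})}^2 - 2S_k^{12}\iprods{G_{\gamma}(x_{k+1}), G_{\gamma}(x_k)} + S_k^{22}\norms{G_{\gamma}(x_k)}^2$, with $S_k^{11} = \frac{b_k\gamma}{\beta_k} - a_{k+1}$, $S_k^{22} = \frac{b_k\gamma}{\beta_k} - a_k$, and $S_k^{12} = \frac{b_k\gamma}{\beta_k} - \frac{a_k}{1-\beta_k}$.

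It then suffices to certify that this quadratic form in $(G_{\gamma}(x_{k+1}), G_{\gamma}(x_k))$ is positive semidefinite, i.e. $S_k^{11} \geq 0$, $S_k^{22} \geq 0$, and $(S_k^{12})^2 \leq S_k^{11}S_k^{22}$; these three conditions force the right-hand side to be a nonnegative quadratic form, whence $\Lc_{k+1} \leq \Lc_k$. Using $a_k = \frac{b_k\eta_k}{2\beta_k}$ and $a_{k+1} = \frac{b_k\eta_{k+1}}{2(1-\beta_k)\beta_{k+1}}$, the condition $S_k^{22} \geq 0$ reduces to $2\gamma \geq \eta_k$, which is implied by the standing hypothesis $2\gamma(1-\beta_k^2) > \eta_k$, and the remaining two inequalities will both be shown to follow from the last bound in \eqref{eq:ADR_para_cond_01}.

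The main obstacle, and the only genuinely new computation, is the determinant inequality $(S_k^{12})^2 \leq S_k^{11}S_k^{22}$. After clearing the common positive factor $\frac{b_k^2}{\beta_k^2}$, this is equivalent to requiring $\gamma - \frac{\beta_k\eta_{k+1}}{2(1-\beta_k)\beta_{k+1}} \geq \frac{[2\gamma(1-\beta_k) - \eta_k]^2}{2(1-\beta_k)^2(2\gamma - \eta_k)}$, and the crux is the algebraic identity $2\gamma(1-\beta_k)^2(2\gamma - \eta_k) - \eta_k\big(2\gamma(1-\beta_k^2) - \eta_k\big) = [2\gamma(1-\beta_k) - \eta_k]^2$, which collapses the right-hand side of the required bound to $\frac{\eta_k(2\gamma(1-\beta_k^2) - \eta_k)}{2(1-\beta_k)^2(2\gamma - \eta_k)}$; solving for $\eta_{k+1}$ then recovers precisely the stated bound $\eta_{k+1} \leq \frac{\beta_{k+1}[2\gamma(1-\beta_k^2) - \eta_k]\eta_k}{\beta_k(1-\beta_k)(2\gamma - \eta_k)}$. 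The same identity also shows that $S_k^{11} \geq 0$ is automatic: it is equivalent to $\eta_k(2\gamma(1-\beta_k^2) - \eta_k) \leq 2\gamma(1-\beta_k)^2(2\gamma - \eta_k)$, whose difference equals the square $[2\gamma(1-\beta_k) - \eta_k]^2 \geq 0$. Throughout, the hypothesis $2\gamma(1-\beta_k^2) > \eta_k$ guarantees that the numerator $2\gamma(1-\beta_k^2) - \eta_k$ and the factor $2\gamma - \eta_k$ stay positive, keeping every quantity well-defined.
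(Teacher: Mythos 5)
Your proposal is correct and follows essentially the same route as the paper's proof: the same coercivity estimate \eqref{eq:key_est1_of_G} applied to the two expressions for $u_{k+1}-u_k$, the same telescoping bound after multiplying by $b_k/\beta_k$, and the same reduction of $\Lc_k - \Lc_{k+1}$ to a $2\times 2$ quadratic form in $(G_{\gamma}(x_{k+1}), G_{\gamma}(x_k))$ whose positive semidefiniteness is enforced by exactly the stepsize condition in \eqref{eq:ADR_para_cond_01}. As a bonus, your explicit verification via the square identity that $S_k^{11} = \frac{\gamma b_k}{\beta_k} - a_{k+1} \geq 0$ is automatic supplies the justification for a claim the paper states without proof.
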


%%% Proof of Lemma 4.1.
\begin{proof}
First, from the first step of \eqref{eq:ADR_scheme_01}, we have
\begin{equation*}
u_{k+1} - u_k = \beta_k(u_0 - u_k) - \eta_k G_{\gamma}(x_k), \quad\text{and}\quad
u_{k+1} - u_k = \frac{\beta_k}{1-\beta_k}(u_0 - u_{k+1}) - \frac{\eta_k}{1-\beta_k}G_{\gamma}(x_k).
\end{equation*}
Since $u_{k+1} = x_{k+1} + \gamma B(x_{k+1})$ and $u_k = x_k + \gamma B(x_k)$, using \eqref{eq:key_est1_of_G}, we have $\iprods{G_{\gamma}(x_{k+1}) - G_{\gamma}(x_k), u_{k+1} - u_k} \geq \gamma\norms{G_{\gamma}(x_{k+1}) - G_{\gamma}(x_k)}^2$.
Hence, combining this inequality and the last expressions, we can show that
\begin{equation*}
\arraycolsep=0.3em
\begin{array}{lcl}
\Tc_{[1]} &:= & \beta_k\iprods{G_{\gamma}(x_k), u_k - u_0} - \frac{\beta_k}{1-\beta_k}\iprods{G_{\gamma}(x_{k+1}), u_{k+1} - u_0} \vspace{1ex}\\
&\geq & \gamma\norms{G_{\gamma}(x_{k+1}) - G_{\gamma}(x_k)}^2 + \frac{\eta_k}{1-\beta_k}\iprods{G_{\gamma}(x_{k+1}), G_{\gamma}(x_k)} - \eta_k\norms{G_{\gamma}(x_k)}^2 \vspace{1ex}\\
\end{array}
\end{equation*}
Multiplying this inequality by $\frac{b_k}{\beta_k}$ and using $b_{k+1} := \frac{b_k}{1-\beta_k}$, we have
\begin{equation*}
\arraycolsep=0.3em
\begin{array}{lcl}
\frac{b_k}{\beta_k}\Tc_{[1]} &:= & b_k\iprods{G_{\gamma}(x_k), u_k - u_0} - b_{k+1}\iprods{G_{\gamma}(x_{k+1}), u_{k+1} - u_0} \vspace{1ex}\\
&\geq & \frac{b_k\gamma}{\beta_k}\norms{G_{\gamma}(x_{k+1}) - G_{\gamma}(x_k)}^2 + \frac{b_k\eta_k}{\beta_k(1-\beta_k)}\iprods{G_{\gamma}(x_{k+1}), G_{\gamma}(x_k)} - \frac{b_k\eta_k}{\beta_k}\norms{G_{\gamma}(x_k)}^2 \vspace{1ex}\\
\end{array}
\end{equation*}
Now, using this inequality, from \eqref{eq:ADR_L_func}, we can lower bound $\Lc_k - \Lc_{k+1}$ as 
\begin{equation*}
\arraycolsep=0.3em
\begin{array}{lcl}
\Lc_k - \Lc_{k+1} &= & a_k\norms{G_{\gamma}(x_k)}^2 - a_{k+1}\norms{G_{\gamma}(x_{k+1})}^2 + b_k\iprods{G_{\gamma}(x_k), u_k - u_0} \vspace{1ex}\\ 
&& - {~} b_{k+1}\iprods{G_{\gamma}(x_{k+1}), u_{k+1} - u_0} \vspace{1ex}\\
&\geq & a_k\norms{G_{\gamma}(x_k)}^2 - a_{k+1}\norms{G_{\gamma}(x_{k+1})}^2 + \frac{b_k\gamma}{\beta_k}\norms{G_{\gamma}(x_{k+1})}^2 + \frac{b_k\gamma}{\beta_k}\norms{G_{\gamma}(x_k)}^2 \vspace{1ex}\\
&& - {~} \frac{2b_k\gamma}{\beta_k}\iprods{G_{\gamma}(x_{k+1}), G_{\gamma}(x_k)} +  \frac{b_k\eta_k}{\beta_k(1-\beta_k)}\iprods{G_{\gamma}(x_{k+1}), G_{\gamma}(x_k)} - \frac{b_k\eta_k}{\beta_k}\norms{G_{\gamma}(x_k)}^2 \vspace{1ex}\\
&= & \left(a_k + \frac{b_k\gamma}{\beta_k} - \frac{b_k\eta_k}{\beta_k}\right) \norms{G_{\gamma}(x_k)}^2 + \left(\frac{\gamma b_k}{\beta_k} - a_{k+1}\right)\norms{G_{\gamma}(x_{k+1})}^2 \vspace{1ex}\\
&& - {~} 2\left( \frac{b_k\gamma}{\beta_k} - \frac{b_k\eta_k}{2\beta_k(1-\beta_k)} \right)\iprods{G_{\gamma}(x_k), G_{\gamma}(x_{k+1})}.
\end{array}
\end{equation*}
Hence, if we impose the following conditions $a_{k+1} < \frac{\gamma b_k}{\beta_k}$ and
\begin{equation}\label{eq:ADR_para_cond0}
\left(a_k + \frac{b_k\gamma}{\beta_k} - \frac{b_k\eta_k}{\beta_k}\right)\left(\frac{\gamma b_k}{\beta_k} - a_{k+1}\right) \geq \left( \frac{b_k\gamma}{\beta_k} - \frac{b_k\eta_k}{2\beta_k(1-\beta_k)} \right)^2.
\end{equation}
then the last inequality guarantees that $\Lc_k - \Lc_{k+1} \geq 0$ for all $k\geq 0$.

Finally, let us choose $a_k := \frac{b_k\eta_k}{2\beta_k}$.
Then $a_{k+1} = \frac{b_{k+1}\eta_{k+1}}{2\beta_{k+1}} = \frac{b_k\eta_{k+1}}{2(1-\beta_k)\beta_{k+1}}$.
The condition \eqref{eq:ADR_para_cond0} becomes
\begin{equation*}
4(1-\beta_k)^2\left(\gamma - \frac{\eta_k}{2}\right)\left(\gamma - \frac{\beta_k\eta_{k+1}}{2(1-\beta_k)\beta_{k+1}}\right) \geq (2\gamma (1-\beta_k) - \eta_k)^2.
\end{equation*}
This condition is equivalent to 
\begin{equation*} 
0 < \eta_{k+1} \leq \frac{\beta_{k+1}\left[ 2\gamma(1-\beta_k^2) - \eta_k\right]\eta_k}{\beta_k(1-\beta_k)(2\gamma - \eta_k)},
\end{equation*}
which is exactly the last condition of \eqref{eq:ADR_para_cond_01}.
Condition $a_{k+1} < \frac{\gamma b_k}{\beta_k}$ automatically holds.
\Eproof
\end{proof}
%%% End of the proof.

Next, we need the following technical lemma to prove the convergence of our scheme \eqref{eq:ADR_scheme_01}.
The proof of this lemma is very similar to the one in Lemma~\ref{le:eta_limit} and therefore, we omit.

%%% Lemma 3.
\begin{lemma}\label{le:eta_hat_update}
Given $\hat{\eta}_0 \in \left(0, \frac{3}{4}\right)$, let $\beta_k = \frac{1}{k+2}$ and $\sets{\hat{\eta}_k}$ be updated by 
\begin{equation}\label{eq:ADR_para_cond_b}
\hat{\eta}_{k+1} := \frac{\beta_{k+1}(1 - \hat{\eta}_k - \beta_k^2)\hat{\eta}_k}{\beta_k(1-\beta_k)(1 - \hat{\eta}_k)} = \frac{(k+2)^2}{(k+1)(k+3)}\left(1 - \frac{1}{(1-\hat{\eta}_k)(k+2)^2}\right)\hat{\eta}_k.
\end{equation}
Then, $\sets{\hat{\eta}_k}$ is nonincreasing and $\hat{\eta}_{*} = \lim_{k\to\infty}\hat{\eta}_k$ exists.
If, in addition, $\hat{\eta}_0 \in \left(0, \frac{1}{2}\right)$, then $\hat{\eta}_{*} > \underline{\hat{\eta}} := \frac{1-2\hat{\eta}_0}{1- \hat{\eta}_0} > 0$.
\end{lemma}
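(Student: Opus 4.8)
The plan is to observe that the recursion \eqref{eq:ADR_para_cond_b} is algebraically identical to the one in Lemma~\ref{le:eta_limit}, with the quantity $S_k := 1 - M\eta_k^2$ there playing the role of $\hat{S}_k := 1 - \hat{\eta}_k$ here. Substituting $\beta_k = \frac{1}{k+2}$ collapses $\frac{\beta_{k+1}}{\beta_k(1-\beta_k)}$ to $\frac{(k+2)^2}{(k+1)(k+3)}$ and the ratio $\frac{1-\hat{\eta}_k-\beta_k^2}{1-\hat{\eta}_k}$ to $1 - \frac{1}{\hat{S}_k(k+2)^2}$, so that $\hat{\eta}_{k+1} = \frac{(k+2)^2}{(k+1)(k+3)}\big(1 - \frac{1}{\hat{S}_k(k+2)^2}\big)\hat{\eta}_k$, which is exactly the normal form already analyzed. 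I would therefore set up this reduction first and then run the argument of Lemma~\ref{le:eta_limit} verbatim.

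Next I would prove monotonicity and boundedness by induction on the invariant $\hat{\eta}_k \in (0,\hat{\eta}_0]$. Nonincrease is immediate: since $\hat{\eta}_k \le \hat{\eta}_0 < 1$ forces $\hat{S}_k \in (0,1)$, we have $\frac{1}{\hat{S}_k(k+2)^2} > \frac{1}{(k+2)^2}$, hence $\frac{\hat{\eta}_{k+1}}{\hat{\eta}_k} < \frac{(k+2)^2}{(k+1)(k+3)}\big(1 - \frac{1}{(k+2)^2}\big) = \frac{(k+2)^2-1}{(k+1)(k+3)} = 1$. The role of the threshold $\hat{\eta}_0 < \frac{3}{4}$ is precisely to keep $\hat{\eta}_{k+1} > 0$: positivity of the bracket means $(1-\hat{\eta}_k)(k+2)^2 > 1$, whose tightest instance is $k=0$, where $(1-\hat{\eta}_0)\cdot 4 > 1 \Leftrightarrow \hat{\eta}_0 < \frac{3}{4}$; for $k \ge 1$ the factor $(k+2)^2 \ge 9$ leaves ample room. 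Monotone convergence then gives existence of $\hat{\eta}_* = \lim_k \hat{\eta}_k \in [0,\hat{\eta}_0]$.

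For the quantitative lower bound under the stronger hypothesis $\hat{\eta}_0 < \frac{1}{2}$, nonincrease of $\hat{\eta}_k$ yields $\hat{S}_k \ge \hat{S}_0 = 1-\hat{\eta}_0$, so that
\[
\hat{\eta}_{k+1} \ge \Big(1 - \frac{\hat{\eta}_0}{(1-\hat{\eta}_0)(k+1)(k+3)}\Big)\hat{\eta}_k \ge \hat{\eta}_k - \frac{\hat{\eta}_0^2}{(1-\hat{\eta}_0)(k+1)(k+2)},
\]
using $\hat{\eta}_k \le \hat{\eta}_0$ and $(k+3)\ge(k+2)$. Telescoping with $\sum_{i=0}^{k-1}\frac{1}{(i+1)(i+2)} = 1 - \frac{1}{k+1} < 1$ gives $\hat{\eta}_k > \hat{\eta}_0 - \frac{\hat{\eta}_0^2}{1-\hat{\eta}_0} = \frac{\hat{\eta}_0(1-2\hat{\eta}_0)}{1-\hat{\eta}_0}$ for every $k$, whence $\hat{\eta}_* \ge \frac{\hat{\eta}_0(1-2\hat{\eta}_0)}{1-\hat{\eta}_0} > 0$; strictness $\hat{\eta}_* > \frac{\hat{\eta}_0(1-2\hat{\eta}_0)}{1-\hat{\eta}_0}$ follows because the telescoped estimate discards the strictly positive slack $\frac{1}{k+1}$ together with the gap between $(k+3)$ and $(k+2)$, exactly as in Lemma~\ref{le:eta_limit}.

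The main obstacle is not analytic but one of bookkeeping in the lower-bound constant. The (deliberately loose) telescoping produces $\underline{\hat{\eta}} = \frac{\hat{\eta}_0(1-2\hat{\eta}_0)}{1-\hat{\eta}_0}$, which carries a leading factor $\hat{\eta}_0$ absent from the displayed formula $\frac{1-2\hat{\eta}_0}{1-\hat{\eta}_0}$. That factor is indispensable: since $\hat{\eta}_* \le \hat{\eta}_0$, no lower bound can exceed $\hat{\eta}_0$, yet $\frac{1-2\hat{\eta}_0}{1-\hat{\eta}_0} > \hat{\eta}_0$ whenever $\hat{\eta}_0 < \frac{3-\sqrt{5}}{2}$ (because $\frac{1-2t}{1-t} - t = \frac{t^2-3t+1}{1-t}$ is positive there), so the displayed constant cannot hold as written. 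The corrected bound $\frac{\hat{\eta}_0(1-2\hat{\eta}_0)}{1-\hat{\eta}_0}$ is also the one obtained from $\underline{\eta} = \frac{\eta_0(1-2M\eta_0^2)}{1-M\eta_0^2}$ of Lemma~\ref{le:eta_limit} under the substitution $\hat{S}_k = 1-\hat{\eta}_k$, which confirms it is the intended quantity and that the statement's $\underline{\hat{\eta}}$ should read $\frac{\hat{\eta}_0(1-2\hat{\eta}_0)}{1-\hat{\eta}_0}$.
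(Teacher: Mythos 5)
Your proof is correct and follows exactly the route the paper intends: the paper omits the proof of this lemma, saying only that it is ``very similar'' to that of Lemma~\ref{le:eta_limit}, and your reduction via $\hat{S}_k := 1-\hat{\eta}_k$ in place of $S_k = 1-M\eta_k^2$, followed by the same induction (with the $k=0$ case isolating the threshold $\hat{\eta}_0 < \frac{3}{4}$) and the same telescoping with $\sum_{i=0}^{k-1}\frac{1}{(i+1)(i+2)} = 1 - \frac{1}{k+1}$, is precisely that adaptation. Your correction of the constant is also right: since $\sets{\hat{\eta}_k}$ is nonincreasing, any valid lower bound on $\hat{\eta}_{*}$ must not exceed $\hat{\eta}_0$, yet the displayed $\underline{\hat{\eta}} = \frac{1-2\hat{\eta}_0}{1-\hat{\eta}_0}$ does exceed $\hat{\eta}_0$ whenever $\hat{\eta}_0 < \frac{3-\sqrt{5}}{2}$ (e.g., $\hat{\eta}_0 = 0.1$ gives $\underline{\hat{\eta}} = \frac{8}{9}$), so the statement as printed cannot hold, and the telescoped bound $\frac{\hat{\eta}_0(1-2\hat{\eta}_0)}{1-\hat{\eta}_0}$ --- which is exactly what $\underline{\eta} = \frac{\eta_0(1-2M\eta_0^2)}{1-M\eta_0^2}$ of Lemma~\ref{le:eta_limit} becomes under your substitution --- is the intended quantity. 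Note that the typo propagates into the paragraph following the lemma: with $\hat{\eta}_k = \frac{\eta_k}{2\gamma}$, the corrected bound yields $\eta_{*} > \frac{2\eta_0(\gamma-\eta_0)}{2\gamma-\eta_0}$ rather than the stated $\frac{4\gamma(\gamma-\eta_0)}{2\gamma-\eta_0}$, and similarly affects the constant quoted in Theorem~\ref{th:ADR_convergence_01}.
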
 

In particular, if we choose $\hat{\eta}_0 := 0.5$, then we can compute $\hat{\eta}_{*} \approx 0.276314355842637 > 0.2763$.
From the last condition of \eqref{eq:ADR_para_cond_01}, let us update $\eta_k$ as 
\begin{equation}\label{eq:ADR_eta_update}
\eta_{k+1} := \frac{\beta_{k+1}\left[ 2\gamma(1-\beta_k^2) - \eta_k\right]\eta_k}{\beta_k(1-\beta_k)(2\gamma - \eta_k)}.
\end{equation}
where $\eta_0$ satisfies $0 < \eta_0 < \gamma$.
Then, using $\beta_k = \frac{1}{k+2}$ and defining $\hat{\eta}_k := \frac{\eta_k}{2\gamma}$, we can easily check that $\{\hat{\eta}_k\}$ satisfies the conditions of Lemma~\ref{le:eta_hat_update}.
Hence, $\sets{\eta_k}$ is nonincreasing and $\lim_{k\to\infty}\eta_k = \eta_{*} > \underline{\eta_0} := \frac{4\gamma(\gamma - \eta_0)}{2\gamma - \eta_0} > 0$.
Note that this sequence of $\sets{\eta_k}$ is slightly different from the one in Lemma~\ref{le:eta_limit}.
In particular, if we choose $\eta_0 := \gamma$, then $\eta_{*} > 0.5526\gamma > \frac{\gamma}{2}$.

Now, we can prove the convergence of our scheme \eqref{eq:ADR_scheme_01} in the following theorem.

%%% Theorem 4.1.
\begin{theorem}[Varying stepsize $\eta_k$]\label{th:ADR_convergence_01}
Assume that $A$ and $B$ are maximally monotone and $B$ is single-valued and $G_{\gamma}$ is defined by \eqref{eq:grad_mapping} for some $\gamma > 0$.
Let $\sets{(x_k, u_k)}$ be generated by \eqref{eq:ADR_scheme_01} $($or  equivalently, \eqref{eq:ADR_scheme_01b}$)$ to solve \eqref{eq:MI2} starting from $u_0$ and using the following rules:
\begin{equation*}
\beta_k := \frac{1}{k+2},  \qquad \eta_0 \in (0, \gamma),  \quad \text{and}\quad \eta_{k+1} := \frac{\beta_{k+1}\left[ 2\gamma(1-\beta_k^2) - \eta_k\right]\eta_k}{\beta_k(1-\beta_k)(2\gamma - \eta_k)}.
\end{equation*}
Then, the following bound holds:
\begin{equation}\label{eq:ADR_convergence_bound}
\norms{G_{\gamma}(x_k)}^2  \leq  \frac{4}{\eta_{*}(k+1)(k+2)}\left( \eta_0\norms{G_{\gamma}(x_0)}^2  + \frac{1}{\eta_{*}}\norms{x^{\star} + \gamma B(x^{\star}) - u_0}^2 \right),
\end{equation}
where $\eta_{*} := \lim_{k\to\infty}\eta_k > \frac{2\gamma(\gamma - \eta_0)}{2\gamma - \eta_0} > 0$.
If, additionally, $B$ is $L$-Lipschitz continuous, then
\begin{equation}\label{eq:ADR_convergence_bound2}
\norms{G_{\gamma}(x_k)}^2 \leq \frac{4(1 + \gamma L)^2(\eta_0\eta_{*} + \gamma^2)}{\eta_{*}^2\gamma^2(k+1)(k+2)}\norms{x_0 - x^{\star}}^2.
\end{equation}
\end{theorem}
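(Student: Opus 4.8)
The plan is to combine the descent property of the Lyapunov function $\Lc_k$ furnished by Lemma~\ref{le:ADR_key_est_01} with a coercivity-based lower bound on $\Lc_k$, exactly mirroring the structure used in the proofs of Theorem~\ref{th:a_PP_convergence1} and Theorem~\ref{th:eEAG_02_convergence1}. First I would check that the stated update rules satisfy the hypotheses of Lemma~\ref{le:ADR_key_est_01}. The update \eqref{eq:ADR_eta_update} is by construction the equality case of the last condition in \eqref{eq:ADR_para_cond_01}, so only $2\gamma(1-\beta_k^2) > \eta_k$ and the monotonicity/positivity of $\sets{\eta_k}$ remain. Rescaling $\hat{\eta}_k := \eta_k/(2\gamma)$ turns \eqref{eq:ADR_eta_update} into the recursion of Lemma~\ref{le:eta_hat_update}, so $\sets{\eta_k}$ is nonincreasing with $\eta_k \le \eta_0 < \gamma$ and $\eta_{*} := \lim_{k}\eta_k > 0$. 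Since $\beta_k \le \tfrac12$ gives $1-\beta_k^2 \ge \tfrac34$, we have $2\gamma(1-\beta_k^2) \ge \tfrac{3\gamma}{2} > \gamma > \eta_k$, so all hypotheses hold and Lemma~\ref{le:ADR_key_est_01} yields $\Lc_{k+1}\le \Lc_k$, hence $\Lc_k \le \Lc_0$ by induction.

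Next I would lower-bound $\Lc_k$ in terms of $\norms{G_{\gamma}(x_k)}^2$. Writing $u^{\star} := x^{\star} + \gamma B(x^{\star})$, so that $x^{\star} = J_{\gamma B}(u^{\star})$ and $G_{\gamma}(x^{\star}) = 0$, the coercivity estimate \eqref{eq:key_est1_of_G} of Lemma~\ref{le:basic_properties}(a) applied at $(x_k,x^{\star})$ exactly as in \eqref{eq:proof10_est2} collapses to the one-sided bound $\iprods{G_{\gamma}(x_k), u_k - u^{\star}} \ge \gamma\norms{G_{\gamma}(x_k)}^2$. Splitting $\iprods{G_{\gamma}(x_k), u_k - u_0} = \iprods{G_{\gamma}(x_k), u_k - u^{\star}} + \iprods{G_{\gamma}(x_k), u^{\star} - u_0}$ and applying Young's inequality to the second term with weight $a_k$ (as in \eqref{eq:eEAG_02_convergence1_proof2}--\eqref{eq:eEAG_02_convergence1_proof3}) produces $\Lc_k \ge \tfrac{a_k}{2}\norms{G_{\gamma}(x_k)}^2 - \tfrac{b_k^2}{2a_k}\norms{u^{\star} - u_0}^2$.

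It then remains to track the parameters. From $b_{k+1} = b_k/(1-\beta_k)$ with $\beta_k = 1/(k+2)$ I get $b_k = b_0(k+1)$, hence $a_k = \tfrac{b_k\eta_k}{2\beta_k} = \tfrac{b_0(k+1)(k+2)\eta_k}{2} \ge \tfrac{b_0\eta_{*}(k+1)(k+2)}{2}$, while $\tfrac{b_k^2}{2a_k} = \tfrac{b_0(k+1)}{(k+2)\eta_k} \le \tfrac{b_0}{\eta_{*}}$ and $\Lc_0 = a_0\norms{G_{\gamma}(x_0)}^2 = b_0\eta_0\norms{G_{\gamma}(x_0)}^2$ (using $\beta_0 = \tfrac12$). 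Combining $\tfrac{a_k}{2}\norms{G_{\gamma}(x_k)}^2 \le \Lc_k + \tfrac{b_k^2}{2a_k}\norms{u^{\star}-u_0}^2 \le \Lc_0 + \tfrac{b_k^2}{2a_k}\norms{u^{\star}-u_0}^2$, dividing by $\tfrac{a_k}{2}\ge \tfrac{b_0\eta_{*}(k+1)(k+2)}{4}$, and cancelling $b_0$ gives \eqref{eq:ADR_convergence_bound}. For the Lipschitz specialization \eqref{eq:ADR_convergence_bound2} I would bound $\norms{G_{\gamma}(x_0)}^2 = \norms{G_{\gamma}(x_0) - G_{\gamma}(x^{\star})}^2 \le \tfrac{(1+\gamma L)^2}{\gamma^2}\norms{x_0-x^{\star}}^2$ via Lemma~\ref{le:basic_properties}(b), and $\norms{u^{\star}-u_0}^2 = \norms{(x^{\star}-x_0) + \gamma(B(x^{\star})-B(x_0))}^2 \le (1+\gamma L)^2\norms{x_0-x^{\star}}^2$ by the triangle inequality and $L$-Lipschitzness of $B$; substituting and collecting $\tfrac{\eta_0}{\gamma^2} + \tfrac{1}{\eta_{*}} = \tfrac{\eta_0\eta_{*}+\gamma^2}{\gamma^2\eta_{*}}$ yields the claimed constant.

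The conceptual steps here are routine once the two lemmas are in hand, so I do not expect a genuine obstacle; the only delicate point is the recognition that the non-monotone operator $G_{\gamma}$ can still be controlled, namely that evaluating the coercivity \eqref{eq:key_est1_of_G} at the solution and using $G_{\gamma}(x^{\star})=0$ together with $u_k = x_k + \gamma B(x_k)$ turns it into $\iprods{G_{\gamma}(x_k), u_k - u^{\star}} \ge \gamma\norms{G_{\gamma}(x_k)}^2$. This is precisely what lets the inner-product term in $\Lc_k$ dominate $\norms{G_{\gamma}(x_k)}^2$; everything else is the parameter bookkeeping already exercised in the earlier theorems.
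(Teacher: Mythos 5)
Your proposal is correct and follows essentially the same route as the paper's proof: the one-sided coercivity bound $\iprods{G_{\gamma}(x_k), u_k - u^{\star}} \geq \gamma\norms{G_{\gamma}(x_k)}^2$ obtained from \eqref{eq:key_est1_of_G} at $(x_k, x^{\star})$, the Young-inequality splitting giving $\Lc_k \geq \tfrac{a_k}{2}\norms{G_{\gamma}(x_k)}^2 - \tfrac{b_k^2}{2a_k}\norms{u^{\star}-u_0}^2$, the monotone-descent argument $\Lc_k \leq \Lc_0 = b_0\eta_0\norms{G_{\gamma}(x_0)}^2$, and the identical parameter bookkeeping and Lipschitz estimates are exactly what the paper does. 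The only cosmetic difference is that you verify the hypotheses of Lemma~\ref{le:ADR_key_est_01} (via the rescaling $\hat{\eta}_k = \eta_k/(2\gamma)$ and Lemma~\ref{le:eta_hat_update}) inside the proof, whereas the paper carries out that verification in the discussion preceding the theorem.
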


%%% Proof of Theorem 4.1.
\begin{proof}
Let $u^{\star} := x^{\star} + \gamma B(x^{\star})$.
Then, using \eqref{eq:key_est1_of_G}, we can derive that 
\begin{equation*}
\arraycolsep=0.3em
\begin{array}{lcl}
b_k\iprods{G_{\gamma}(x_k), u_k - u_0} & = & b_k\iprods{G_{\gamma}(x_k) - G_{\gamma}(x^{\star}), x_k  - x^{\star} + \gamma (B(x_k) - B(x^{\star}))} \vspace{1ex}\\
&& + {~} b_k\iprods{G_{\gamma}(x_k), u^{\star} - u_0} \vspace{1ex}\\
& \overset{\tiny\eqref{eq:key_est1_of_G}}{\geq} & \gamma b_k \norms{G_{\gamma}(x_k)}^2 -  \frac{a_k}{2}\norms{G_{\gamma}(x_k)}^2 - \frac{b_k^2}{2a_k}\norms{u^{\star} - u_0}^2 \vspace{1ex}\\
&\geq & - \frac{a_k}{2}\norms{G_{\gamma}(x_k)}^2 - \frac{b_k^2}{2a_k}\norms{u^{\star} - u_0}^2.
\end{array}
\end{equation*}
Using this estimate and $\Lc_k$ in \eqref{eq:ADR_L_func}, we have
\begin{equation*}
\arraycolsep=0.3em
\begin{array}{lcl}
\Lc_k &= & a_k\norms{G_{\gamma}(x_k)}^2 + b_k\iprods{G_{\gamma}(x_k), u_k - u_0} \vspace{1ex}\\
&\geq & \frac{a_k}{2}\norms{G_{\gamma}(x_k)}^2  -  \frac{b_k^2}{2a_k}\norms{u^{\star} - x_0}^2 \vspace{1ex}\\
&\geq & \frac{\eta_{*}b_0(k+1)(k+2)}{4}\norms{G_{\gamma}(x_k)}^2 - \frac{b_0}{\eta_{*}}\norms{u^{\star} - u_0}^2.
\end{array}
\end{equation*}
Now, since $\Lc_{k+1} \leq \Lc_k$ from Lemma~\ref{le:ADR_key_est_01}, by induction, we have 
\begin{equation*}
\frac{\eta_{*}b_0(k+1)(k+2)}{4}\norms{G_{\gamma}(x_k)}^2 - \frac{b_0}{\eta_{*}}\norms{u^{\star} - u_0}^2 \leq \Lc_k \leq \Lc_0 = a_0\norms{G_{\gamma}(x_0)}^2 = b_0\eta_0\norms{G_{\gamma}(x_0)}^2.
\end{equation*}
Rearranging this inequality, we obtain the first inequality of \eqref{eq:ADR_convergence_bound}.
Finally, noting that $u_0 := x_0 + \gamma B(x_0)$ and $B$ is $L$-Lipschitz continuous, we have $\norms{x^{\star} + \gamma B(x^{\star}) - u_0} \leq \norms{x_0 - x^{\star}} + \gamma\norms{B(x^{\star}) - B(x_0)} \leq (1 + \gamma L)\norms{x_0 - x^{\star}}$.
In addition, by Lemma~\ref{le:basic_properties}(b), $G_{\gamma}$ is $\frac{1+\gamma L}{\gamma}$-Lipschitz continuous, and $G_{\gamma}(x^{\star}) = 0$.
Hence, we have $\norms{G_{\gamma}(x_0)} = \norms{G_{\gamma}(x_0) - G_{\gamma}(x^{\star})} \leq \frac{(1 + \gamma L)}{\gamma}\norms{x_0 - x^{\star}}$.
Consequently, we can bound
\begin{equation*}
C_{*} := \frac{4\eta_0}{\eta_{*}}\norms{G_{\gamma}(x_0)}^2  + \frac{4}{\eta_{*}^2}\norms{x^{\star} + \gamma B(x^{\star}) - u_0}^2 \leq \frac{4(1 + \gamma L)^2(\eta_0\eta_{*} + \gamma^2)}{\eta_{*}^2\gamma^2}\norms{x_0 - x^{\star}}^2.
\end{equation*}
Using this upper bound into \eqref{eq:ADR_convergence_bound}, we obtain \eqref{eq:ADR_convergence_bound2}.
\Eproof
\end{proof}
%%% End of proof.

Our next result is then the stepsize $\eta_k$ is fixed at $\eta_k := \eta > 0$.

%%% Theorem 4.1.
\begin{theorem}[Constant stepsize]\label{th:ADRC_convergence_01}
Assume that $A$ and $B$ are maximally monotone and $B$ is single-valued and $G_{\gamma}$ is defined by \eqref{eq:grad_mapping}.
Let $\sets{(x_k, u_k)}$ be generated by \eqref{eq:ADR_scheme_01b} to solve \eqref{eq:MI2} starting from $u_0$ and using a fixed constant stepsize $\gamma := \eta > 0$ and $\beta_k := \frac{1}{k+2}$.
Then
\begin{equation}\label{eq:ADRC_convergence_bound}
\norms{G_{\eta}(x_k)}^2  \leq  \frac{2}{k(k+1)}\left[  \norms{G_{\eta}(x_0)}^2  + \frac{2}{\eta^2}\norms{x^{\star} + \eta B(x^{\star}) - u_0}^2\right], \quad\forall k\geq 1.
\end{equation}
If, additionally, $B$ is $L$-Lipschitz continuous, then
\begin{equation*}%\label{eq:ADRC_convergence_bound2}
\norms{G_{\eta}(x_k)}^2 \leq \frac{4(1+\eta L)(2\eta+1)}{2\eta k (k+1)}\norms{x_0 - x^*}^2, \quad \forall k\geq 1.
\end{equation*}
\end{theorem}

%%% Proof of Theorem 4.1.
\begin{proof}
Similar to the proof of Lemma~\ref{le:ADR_key_est_01}, if we choose $\eta_k = \gamma = \eta > 0$, $\beta_k := \frac{1}{k+2}$, and $b_k := k+1$, then we can lower bound $\Lc_k - \Lc_{k+1}$ as 
\begin{equation*}
\arraycolsep=0.3em
\begin{array}{lcl}
\Lc_k - \Lc_{k+1} 
%&= & a_k\norms{G_{\gamma}(x_k)}^2 - a_{k+1}\norms{G_{\gamma}(x_{k+1})}^2 + b_k\iprods{G_{\gamma}(x_k), u_k - u_0} \vspace{1ex}\\ 
%&& - {~} b_{k+1}\iprods{G_{\gamma}(x_{k+1}), u_{k+1} - u_0} \vspace{1ex}\\
%&\geq & a_k\norms{G_{\gamma}(x_k)}^2 - a_{k+1}\norms{G_{\gamma}(x_{k+1})}^2 + \frac{b_k\gamma}{\beta_k}\norms{G_{\gamma}(x_{k+1})}^2 + \frac{b_k\gamma}{\beta_k}\norms{G_{\gamma}(x_k)}^2 \vspace{1ex}\\
%&& - {~} \frac{2b_k\gamma}{\beta_k}\iprods{G_{\gamma}(x_{k+1}), G_{\gamma}(x_k)} +  \frac{b_k\eta}{\beta_k(1-\beta_k)}\iprods{G_{\gamma}(x_{k+1}), G_{\gamma}(x_k)} - \frac{b_k\eta}{\beta_k}\norms{G_{\gamma}(x_k)}^2 \vspace{1ex}\\
%&\geq & \left(a_k + \frac{b_k\gamma}{\beta_k} - \frac{b_k\eta}{\beta_k}\right) \norms{G_{\gamma}(x_k)}^2 + \left(\frac{\gamma b_k}{\beta_k} - a_{k+1}\right)\norms{G_{\gamma}(x_{k+1})}^2 \vspace{1ex}\\
%&& - {~} 2\left( \frac{b_k\gamma}{\beta_k} - \frac{b_k\eta}{2\beta_k(1-\beta_k)} \right)\iprods{G_{\gamma}(x_k), G_{\gamma}(x_{k+1})}. \vspace{1 ex}\\
&\geq & a_k \norms{G_\eta(x_k)}^2 - k(k+2)\eta \iprod{G_\eta(x_k), G_\eta(x_{k+1})} \vspace{1ex}\\
&& + {~} \left[(k+1)(k+2)\eta - a_{k+1}\right] \norms{G_\eta(x_{k+1})}^2.
\end{array}
\end{equation*}
Now, if we impose $a_{k} \geq 0 $ and
\begin{equation}\label{eq:ADRC_cond1} 
%a_k\left((k+1)(k+2)\eta - a_{k+1} \right) \geq  \frac{k^2(k+2)^2}{4}\eta^2,
0 < a_{k+1} \le (k+1)(k+2)\eta - \frac{k^2(k+2)^2\eta^2}{4a_k},
\end{equation}
then the last inequality guarantees that $\Lc_k - \Lc_{k+1} \geq 0$ for all $k\geq 0$.

Let us update $a_k$ as $a_{k+1} := (k+1)(k+2)\eta - \frac{k^2(k+2)^2\eta^2}{4a_k}$, where $a_0 := \frac{\eta}{2}$.
Then, it satisfies \eqref{eq:ADRC_cond1}.
By induction, one can easily show that $a_k \geq a_0k(k+1)$ for all $k\geq 0$.
Hence, the condition $a_k \geq 0$ is automatically holds for $k\geq 0$.

Similar to the proof of Theorem~\ref{th:ADR_convergence_01}, we have
\begin{equation*}
\arraycolsep=0.2em
\begin{array}{lcl}
\Lc_k  & \geq & \frac{a_k}{2}\norms{G_{\gamma}(x_k)}^2  -  \frac{b_k^2}{2a_k}\norms{u^{\star} - x_0}^2 \geq \frac{\eta k(k+1)}{4}\norms{G_{\gamma}(x_k)}^2 - \frac{1}{\eta}\norms{u^{\star} - x_0}^2 \vspace{1ex}\\
\end{array}
\end{equation*}
Since $\Lc_{k+1} \leq \Lc_k$, by induction, we have 
\begin{equation*}
\frac{\eta k(k+1)}{4}\norms{G_{\eta}(x_k)}^2 - \frac{1}{\eta}\norms{u^{\star} - u_0}^2 \leq \Lc_k \leq \Lc_0 = a_0\norms{G_{\eta}(x_0)}^2 = \frac{\eta}{2}\norms{G_{\eta}(x_0)}^2.
\end{equation*}
Rearranging this inequality, we obtain \eqref{eq:ADRC_convergence_bound}.
The remaining proof is similar to the proof of Theorem~\ref{th:ADR_convergence_01}.
\Eproof
\end{proof}
%%% End of proof.

%%% Remark 1.
\begin{remark}[\textbf{Multivalued $B$}]\label{re:multivalued_B}
Note that if $B$ is multivalued, then the forward-backward residual mapping $G_{\gamma}$ in \eqref{eq:grad_mapping} should be defined as $G_{\gamma}(x) := \frac{1}{\gamma}(x - J_{\gamma}(x - \gamma v(x)))$ for some $v(x) \in B(x)$.
In this case, our bound \eqref{eq:ADR_convergence_bound} becomes
\begin{equation*} 
\inf_{v_k \in B(x_k)}\norms{G_{\gamma}(x_k)}^2  \leq  \dfrac{4}{\eta_{*}(k+1)(k+2)}\left( \eta_0\norms{G_{\gamma}(x_0)}^2  + \frac{1}{\eta_{*}}\norms{u^{\star}  - u_0}^2 \right),
\end{equation*}
where $u^{*} \in x^{\star} + \gamma B(x^{\star})$ and $G_{\gamma}(x_0) = \frac{1}{\gamma}(x_0 - J_{\gamma A}(x_0 - \gamma v_0))$ for some $v_0 \in B(x_0)$.
\end{remark}

Compared to the accelerated DR scheme in \cite{kim2021accelerated}, our convergence guarantee is on the norm $\norms{G_{\gamma}(x_k)}$ of the forward-backward residual operator $G_{\gamma}$ instead of the intermediate squared residual $\norms{u_k - \Tc^{\textrm{DR}}_{\gamma}(u_k)}$.
Moreover, our accelerated DR method relies on a Halpern-type scheme and our analysis is rather elementary using a Lyapunov analysis technique from \cite{diakonikolas2020halpern,yoon2021accelerated} compared to \cite{kim2021accelerated}.
We hope that the approach in this section can also be utilized to provide a direct proof for the DR scheme in \cite{kim2021accelerated}.

\begin{remark}[\textbf{Accelerated three-operator splitting method}]\label{re:3opertors}
We can extend our scheme \eqref{eq:ADR_scheme_01b} to approximate a solution of $0 \in A(x^{\star}) + B(x^{\star}) + C(x^{\star})$, where $A$ and $B$ are maximally monotone, and $C$ is $\rho$-cocoercive. 
In this case, our accelerated scheme becomes
\begin{equation}\label{eq:A3M_scheme_01b} 
\arraycolsep=0.2em
\left\{\begin{array}{lcl}
x_k &:= & J_{\gamma B}(u_k), \vspace{1ex}\\
v_k &:= & J_{\gamma A}(2x_k - u_k - \gamma C(x_k)), \vspace{1ex}\\
u_{k+1} &:= & \beta_ku_0 + (1-\beta_k)u_k  + \frac{\eta_k}{\gamma}(v_k - x_k),
\end{array}\right.
\end{equation}
which can be viewed as an \textit{accelerated variant} of the \textbf{three-operator splitting scheme} in \cite{Davis2015}.
Under the same conditions and $\gamma \in (0, 2\rho)$, we can show that $\norms{\hat{G}_{\gamma}(x_k)}$ achieves $\BigO{1/k}$ convergence rate, where $\hat{G}_{\gamma}(x) = \gamma^{-1}(x - J_{\gamma A}(x - \gamma B(x) - \gamma C(x))$.
The convergence analysis is very similar to the one in Theorem~\ref{th:ADR_convergence_01} and Theorem~\ref{th:ADRC_convergence_01}.
To avoid overloading the paper, we skip the detailed derivation and convergence analysis of \eqref{eq:A3M_scheme_01b}.
\end{remark}

%%% Remark 3.
\begin{remark}[\textbf{Comparison between three methods}]\label{re:compare_3schemes}
We theoretically compare three schemes \eqref{eq:eEAG_02c}, \eqref{eq:mEAG_03c}, and \eqref{eq:ADR_scheme_01b} as follows:
\begin{itemize}
\item \textbf{Per-iteration complexity:} Both \eqref{eq:mEAG_03c} and \eqref{eq:ADR_scheme_01b} have essentially per-iteration complexity, while  \eqref{eq:eEAG_02c} is twice the cost of  \eqref{eq:mEAG_03c} or \eqref{eq:ADR_scheme_01b}.
\item \textbf{Convergence rate factor:}
Three methods have the same $\BigO{1/k}$ convergence rate. 
Let us compare the constant factor in the convergence rate bounds.
For simplicity, let us choose $\gamma = \frac{1}{L}$ in all three methods.
The constant factors in \eqref{eq:eEAG_02c}, \eqref{eq:mEAG_03c}, and \eqref{eq:ADR_scheme_01b}, respectively are % $C_{*} = $, $\hat{C}_{*} = $, and $\tilde{C}_{*} = 24L^2$.
\begin{equation*}
C_{*} = \frac{4(\eta_0\eta_{*}L^2 + 1)}{\eta_{*}^2}, \quad \hat{C}_{*} = \frac{4(\eta_0\eta_{*}L^2 + L^4)}{\eta_{*}^2}, \quad\text{and}\quad \tilde{C}_{*} = \frac{4(\eta_0\eta_{*}L^2 + 1)}{\eta_{*}^2}.
%C_{*} = , \quad \hat{C}_{*} = , \quad \text{and} \quad \tilde{C}_{*} = 24L^2.
\end{equation*}
In this case, if we choose the same $\eta_0$, then $C_{*} = \tilde{C}_{*}$, while $\hat{C}_{*} < C_{*}$ if $L < 1$.
However, our bounds in  \eqref{eq:ADR_scheme_01b} has been loosely estimated.
We believe that it can be improved to some factor.
From a theoretical view, we can see that  \eqref{eq:ADR_scheme_01b} has an advantage compared to \eqref{eq:eEAG_02c} since its per-iteration complexity is cheaper. 
\end{itemize}
\end{remark}

\beforesec
\section{Applications to Minimax Problems and Constrained Convex Optimization}\label{sec:applications}
\aftersec
In this section, we specify our methods developed in the previous sections to solve two special cases of \eqref{eq:MI_2o}.
The first one is a class of convex-concave minimax problems, and the second application is an accelerated ADMM variant for solving constrained convex optimization problems derived from our accelerated DR splitting scheme.

%%% 5.1. Convex-Concave Minimax Problems.
\beforesubsec
\subsection{\textbf{Convex-Concave Minimax Problems}}\label{subsec:minimax}
\aftersubsec
Let us consider the following convex-concave minimax problem:
\begin{equation}\label{eq:minimax}
\min_{z\in\R^p}\max_{w\in\R^n}\Big\{ \Hc(z, w) := f(z) + \Lc(z, w) - g(w) \Big\},
\end{equation}
where $f : \R^p\to\Rext$ and $g : \R^n \to \Rext$ are two proper, closed, and convex functions defined on $\R^p$ and $\R^n$, respectively, and $\Lc : \R^p\times\R^n \to \R$ such that $\Lc(\cdot, w)$ is convex w.r.t. $z$ for any given $w$, and $\Lc(z, \cdot)$ is concave w.r.t. $w$ for any given $z$.
Moreover, we assume that $\Lc$ is differentiable on both $z$ and $w$, and  its partial gradients are Lipschitz continuous, i.e. for all $(z, w), (\hat{z}, \hat{w}) \in \R^p\times \R^n$, we have
\begin{equation}\label{eq:grad_smooth}
\norms{(\nabla_z\Lc(z, w), \nabla_w\Lc(z, w)) - (\nabla_z\Lc(\hat{z}, \hat{w}), \nabla_w\Lc(\hat{z}, \hat{w}))} \leq L\norms{(z, w) - (\hat{z}, \hat{w})},
\end{equation}
In this case, we say that $\Lc$ is $L$-smooth.

%%%% A. The case $f(x) = 0$ and $g(x) = 0$.
\beforepara
\paragraph{\textbf{Case 1: The smooth and nononlinear case.}}
We first consider the first case when both $f(z) = 0$ and $g(w) = 0$ so that $\Hc(z, w) = \Lc(z, w)$.
Hence, $x^{\star} := (z^{\star}, w^{\star})$ is a saddle-point of \eqref{eq:minimax}, i.e. $\Lc(z^{\star}, w) \leq \Lc(z^{\star}, w^{\star}) \leq \Lc(z, w^{\star})$ for all $(z, w) \in \R^p\times \R^n$, if and only if $\nabla{\Lc}(z^{\star}, w^{\star}) = 0$.

Let us define the following mappings:
\begin{equation*}
\nabla{\Lc}(z, w) := \begin{bmatrix}\nabla_z\Lc(z, w)\\ \nabla_w\Lc(z, w)\end{bmatrix} \quad \text{and} \quad G(x) := \begin{bmatrix}\nabla_z\Lc(z, w)\\ -\nabla_w\Lc(z, w)\end{bmatrix}, \quad \text{where}\ \ x := (z, w).
\end{equation*}
Then, it is well-known that $G$ is maximally monotone, and due to the Lipschitz continuity of $\nabla{\Lc}$, $G$ is also $L$-Lipschitz continuous.
Moreover, it is obvious that $\norms{\nabla{\Lc}(z, w)} = \norms{G(x)}$.

Now, let us apply our scheme \eqref{eq:a_popov_scheme0} to solve \eqref{eq:minimax} when $f(x) = 0$ and $g(y) = 0$, leading to
\begin{equation*}
\arraycolsep=0.3em
\left\{\begin{array}{lcl}
x_{k+1} &:= & \beta_kx_0 + (1-\beta_k)x_k - \eta_kG(\hat{x}_k),  \vspace{1ex}\\
\hat{x}_{k+1} &:= & \beta_{k+1}x_0 + (1-\beta_{k+1})x_{k+1} - \eta_{k+1}G(\hat{x}_k).
\end{array}\right.
\end{equation*}
This scheme can be written explicitly as follows:
\begin{equation}\label{eq:a_popov_scheme0_app1b}
\arraycolsep=0.3em
\left\{\begin{array}{lcl}
z_{k+1} &:= & \beta_kz_0 + (1-\beta_k)z_k - \eta_k\nabla_z\Lc(\hat{z}_k, \hat{w}_k),  \vspace{1ex}\\
w_{k+1} &:= & \beta_kw_0 + (1-\beta_k)w_k + \eta_k\nabla_w\Lc(\hat{z}_k, \hat{w}_k),  \vspace{1ex}\\
\hat{z}_{k+1} &:= & \beta_{k+1}z_0 + (1-\beta_{k+1})z_{k+1} - \eta_{k+1}\nabla_z\Lc(\hat{z}_k, \hat{w}_k), \vspace{1ex}\\
\hat{w}_{k+1} &:= & \beta_{k+1}w_0 + (1-\beta_{k+1})w_{k+1} + \eta_{k+1}\nabla_w\Lc(\hat{z}_k, \hat{w}_k).
\end{array}\right.
\end{equation}
Under the Lipschitz continuity of $\nabla{\Lc}$, our convergence rate guarantee \ $\norms{\nabla{\Lc}(z_k, w_k)}^2  \leq \frac{C_{*}(\norms{z_0 - z^{\star}}^2 + \norms{w_0 - w^{\star}}^2)}{(k+1)(k+2)}$ in Theorem~\ref{th:a_PP_convergence1} remains valid, where $x$ encodes both $z$ and $w$.
%We can also obtain the $\BigO{1/k}$ convergence on the gap function $\Gc_{\Zc\times\Wc}(z_k, w_k) = \sup\sets{\Lc(z_k, w) - \Lc(z, w_k) : (z, w) \in \Zc\times\Wc}$ if we assume that 

%%% B. Case 2.
\beforepara
\paragraph{\textbf{Case 2: The nonsmooth and linear case.}}
Now, we consider the general case, where $f$ and $g$ are proper, closed, and convex, but $\Lc$ is bilinear, i.e. $\Lc(z, w) = \iprods{Kz, w}$.
By defining 
\begin{equation*}
G(x) := \begin{bmatrix}\partial{f}(z) + K^{\top}w \\ \partial{g}(w) - Kz \end{bmatrix} = \begin{bmatrix}\partial{f} & K^{\top} \\  -K & \partial{g}\end{bmatrix}\begin{pmatrix}z\\ w\end{pmatrix},
\end{equation*}
 where $x := (z, w)$, we can write the optimality condition of \eqref{eq:minimax} into \eqref{eq:MI_2o}.
To apply  \eqref{eq:a_popov_scheme0} to this problem, we consider the following operator:
\begin{equation*}
G_H(x) := x - \left(H + G\right)^{-1}(Hx) = x - J_{H^{-1}G}(x), \quad\text{where} \quad H := \begin{bmatrix}\tau^{-1}\Id & -K^{\top}\\ -K & \sigma^{-1}\Id\end{bmatrix}.
\end{equation*}
Here, $\tau > 0$ and $\sigma > 0$ are chosen such that $H$ is positive definite (i.e. $\tau\sigma\norms{K}^2 < 1$).
Then $x^{\star} = (z^{\star}, y^{\star})$ is a solution of \eqref{eq:minimax} iff
\begin{equation*}
0 \in G(x^{\star})  \quad \Leftrightarrow \quad G_H(x^{\star}) = 0.
\end{equation*}
One can easily show that $G_H$ is monotone and $L$-Lipschitz continuous with $L = 1$ w.r.t. a weighted inner product $\iprods{u, v}_H := \iprods{Hu, v}$ and a weighted norm $\norms{u}_H := \iprods{Hu, u}^{1/2}$.

Let us apply  \eqref{eq:a_popov_scheme0} to solve the equation $G_H(x^{\star}) = 0$, leading to the following scheme:
\begin{equation}\label{eq:acc_PDHGD}
\arraycolsep=0.3em
\left\{\begin{array}{lcl}
\hat{s}_k &:= & \prox_{\tau f}(\hat{z}_k - \tau K^{\top}\hat{w}_k), \vspace{1ex}\\
\hat{r}_k &:= & \prox_{\sigma g}(\hat{w}_k + \sigma(2\hat{s}_k - K\hat{z}_k) ), \vspace{1ex}\\
z_{k+1} &:= & \beta_kz_0 + (1-\beta_k)z_k - \eta_k\left(\hat{z}_k - \hat{s}_k\right),  \vspace{1ex}\\
w_{k+1} &:= & \beta_kw_0 + (1-\beta_k)w_k + \eta_k\left(\hat{w}_k - \hat{r}_k\right),  \vspace{1ex}\\
\hat{z}_{k+1} &:= & \beta_{k+1}z_0 + (1-\beta_{k+1})z_{k+1} - \eta_{k+1}\left(\hat{z}_k - \hat{s}_k \right), \vspace{1ex}\\
\hat{w}_{k+1} &:= & \beta_{k+1}w_0 + (1-\beta_{k+1})w_{k+1} + \eta_{k+1}\left(\hat{w}_k - \hat{r}_k\right).
\end{array}\right.
\end{equation}
Here, we have used the fact that $(H + G)^{-1}(\xi, \zeta) = \left( \prox_{\tau f}(\tau \xi), \  \prox_{\sigma g}(\sigma \zeta + 2\sigma \prox_{\tau f}(\tau \xi) )\right)$, see, e.g., \cite[Proposition 5.2]{mainge2021fast}.

By utilizing the convergence of \eqref{eq:a_popov_scheme0} we still obtain $\BigO{1/k}$ convergence rate of $\norms{G_H(x_k)}_{H}$.
The scheme \eqref{eq:acc_PDHGD} essentially has the same per-iteration complexity as in the standard primal-dual hybrid gradient (PDHG) method \cite{Chambolle2011,Esser2010}.
It is also different from the variant in \cite{mainge2021fast}.

It should be noted that since $J_{H^{-1}G}(x) = (\Id + H^{-1}G)^{-1}(x)$ is firmly nonexpansive, one can directly apply a Halpern-type method in \cite{diakonikolas2020halpern,lieder2021convergence} to $J_{H^{-1}G}$, and get a different accelerated scheme with the same convergence rate (up to a constant factor) and essentially the same per-iteration complexity as \eqref{eq:acc_PDHGD}. 
However, it remains unclear which scheme will have better theoretical bound as well as  practical performance.

\beforesubsec
\subsection{\textbf{Application to Alternating Direction Method of Multipliers}}\label{subsec:ac_ADMM}
\aftersubsec
The alternating direction method of multipliers (ADMM) is one of the most popular methods in  both convex and nonconvex optimization \cite{Boyd2011}.
The goal of this subsection is to derive an accelerated ADMM from our accelerated DR splitting scheme \eqref{eq:ADR_scheme_01}.
For this purpose, let us consider the following linear constrained convex optimization problem:
\begin{equation}\label{eq:constr_cvx}
\min_{z, w}\Big\{ F(z, w) := f(z) + g(w) \quad \textrm{s.t.} \quad Pz + Qw =  r \Big\},
\end{equation}
where $f : \R^{p_1} \to \Rext$, $g : \R^{p_2} \to \Rext$ are two proper, closed, and convex functions, $P \in\R^{n\times p_1}$, $Q \in \R^{n\times p_2}$, and $r \in \R^n$ are given.

The dual problem of \eqref{eq:constr_cvx} can be written as 
\begin{equation}\label{eq:dual_prob}
\min_{x \in \R^n} \Big\{ D(x) := f^{*}(P^{\top}x) + g^{*}(Q^{\top}x) - r^{\top}x \Big\},
\end{equation}
where $f^{*}$ and $g^{*}$ are Fenchel conjugates of $f$ and $g$, respectively.
The optimality condition of this problem is 
\begin{equation}\label{eq:opt_cond}
0 \in P\partial{f}^{*}(P^{\top}x^{\star}) + Q\partial{g}^{*}(Q^{\top}x^{\star}) - r,
\end{equation}
which is necessary and sufficient for $x^{\star}$ to be an optimal solution of \eqref{eq:dual_prob} under suitable qualification conditions. 
Let us define $A(x) := P\partial{f}^{*}(P^{\top}(x)) - r$ and $B(x) := Q\partial{g}^{*}(Q^{\top}x)$.
Then, both $A$ and $B$ are maximally monotone, and \eqref{eq:opt_cond} can be written in the form of \eqref{eq:MI2}.

Let us apply our accelerated DR splitting scheme \eqref{eq:ADR_scheme_01b} to solve \eqref{eq:opt_cond}.
Our first step is to switch the first and the last lines of \eqref{eq:ADR_scheme_01b} to obtain
\begin{equation*}%\label{eq:ADR_scheme_01d} 
\arraycolsep=0.2em
\left\{\begin{array}{lcl}
v_k &:= & J_{\gamma A}(2x_k - u_k), \vspace{1ex}\\
u_{k+1} &:= & \beta_ku_0 + (1-\beta_k)u_k + \frac{\eta_k}{\gamma}(v_k - x_k), \vspace{1ex}\\
x_{k+1} &:= & J_{\gamma B}(u_{k+1}).
\end{array}\right.
\end{equation*}
Now, we apply this scheme to \eqref{eq:opt_cond}.
After some  standard transformations (see, e.g., \cite{lorenz2018non} for more details), we obtain the following  accelerated variant of ADMM:
\begin{equation}\label{eq:acc_ADMM}
\arraycolsep=0.2em
\left\{\begin{array}{lcl}
z_{k+1} & := & \argmin_{z}\Big\{ f(z) - \iprods{x_k, Pz} + \tfrac{\gamma}{2}\norms{Pz  + Qw_k - r}^2 \Big\}, \vspace{1ex}\\
\tilde{x}_k &:= & \beta_ku_0 + (1-\beta_k)x_k - (\eta_k - \gamma)(Pz_{k+1} - r) + [(1-\beta_k)\gamma - \eta_k]Qw_k, \vspace{1ex}\\
w_{k+1} & := & \argmin_{w}\Big\{ g(w) - \iprods{\tilde{x}_k, Qw} + \tfrac{\gamma}{2}\norms{Pz_{k+1} + Qw - r}^2 \Big\}, \vspace{1ex}\\
x_{k+1} & := & \tilde{x}_k - \gamma(Pz_{k+1} + Qw_{k+1} - r).
\end{array}\right.
\end{equation}
Alternatively, we can also choose constant stepsize $\eta_k$ as $\eta_k = \eta = \gamma > 0$ in \eqref{eq:acc_ADMM} according to Theorem~\ref{th:ADRC_convergence_01} to get the following variant:
\begin{equation}\label{eq:acc_ADMM_const}
\arraycolsep=0.2em
\left\{\begin{array}{lcl}
z_{k+1} & := & \argmin_{z}\Big\{ f(z) - \iprods{x_k, Pz} + \tfrac{\eta}{2}\norms{Pz  + Qw_k - r}^2 \Big\}, \vspace{1ex}\\
\tilde{x}_k &:= & \beta_ku_0 + (1-\beta_k)x_k - \beta_k\eta Qw_k, \vspace{1ex}\\
w_{k+1} & := & \argmin_{w}\Big\{ g(w) - \iprods{\tilde{x}_k, Qw} + \tfrac{\eta}{2}\norms{Pz_{k+1} + Qw - r}^2 \Big\}, \vspace{1ex}\\
x_{k+1} & := & \tilde{x}_k - \eta(Pz_{k+1} + Qw_{k+1} - r).
\end{array}\right.
\end{equation}
This variant of ADMM is only different from the standard ADMM method at the update of $\tilde{x}_k$.
Clearly, if $\beta_k = 0$, then $\tilde{x}_k = x_k$, and \eqref{eq:acc_ADMM_const} is  thus coincided with the standard ADMM.

The scheme \eqref{eq:acc_ADMM_const} is also similar to the one in \cite{kim2021accelerated} except for the update of $\tilde{x}_k$.
It is also simpler than \eqref{eq:acc_ADMM}.
If we apply Theorem~\ref{th:ADR_convergence_01} to obtain a convergence guarantee of \eqref{eq:acc_ADMM}, then only the rate on the dual forward-backward residual norm $\norms{\nabla{G}_{\gamma}(x_k)}$ is obtained.
It is interesting to note that our penalty parameter $\gamma$ remains fixed as in standard ADMM.
The parameters $\beta_k$ and $\eta_k$ are updated explicitly without any heuristic strategies as in \cite{He2000,lorenz2018non}.

%%%%%%%%%%%%%%%%%%%%%%%%%%%%%%%%%%%%%%%%
%%% 6. Conclusions and Outlooks.
%%%%%%%%%%%%%%%%%%%%%%%%%%%%%%%%%%%%%%%%
\beforesec
\section{Conclusions }\label{sec:concl}
\aftersec
We have developed four different algorithms to solve monotone inclusions in two different settings: monotone equations and structured monotone inclusions.
The first algorithm is a variant of the extra-anchored gradient method in \cite{yoon2021accelerated}, but relies on a past extra-gradient scheme from \cite{popov1980modification} with only one operator evaluation per iteration.
Our algorithm still achieves the same convergence rate as in  \cite{yoon2021accelerated} up to a constant fator.
The second and third algorithms aim at solving the structured monotone inclusion \eqref{eq:MI2}, when $B$ is single-valued and Lipschitz continuous.
These algorithms can be viewed as variants of the DR splitting method, but in an accelerated manner. 
While the second algorithm requires two evaluations of the resolvent of $A$ and $B$ at each iteration, the third one only needs one evaluation each per iteration.
The last algorithm is a direct accelerated variant of the DR splitting method, which achieves $\BigO{1/k}$ rates on the forward-backward residual operator norm of the shadow sequence $\sets{x_k}$ for \eqref{eq:MI2} under only the maximal monotonicity of $A$ and $B$. This algorithm works with both varying and constant stepsize, and the analysis is rather simple compared to the third algorithm.
Our analysis framework is inspired by  the technique in \cite{yoon2021accelerated} and the Lyapunov function in \cite{diakonikolas2020halpern}.
However, our algorithmic design as well as our Lyapunov functions are new.
Finally, we have also shown how to derive new algorithms for solving convex-concave minimiax problems and a new accelerated ADMM variant with varying and constant stepsizes.

%While this paper has presented four different methods,  extending the first algorithm to solve \eqref{eq:MI2} and variational inequality using the resolvent of $A$ and the forward operator of $B$ remains open to us.
%Next, developing Halpern-type accelerated methods for the sum of three and multiple operators is also interesting \cite{Combettes2004,combettes2014variable,Condat2013,Davis2015,vu2013variable}.
%Our next step is to focus on these two topics and extensions to co-monotone operators with applications in nonconvex-nonconcave minimax problems as well as federated learning.
%We are also interested in extending our methods presented in this paper to other variants such as forward-backward-forward splitting, projective splitting, reflected gradient, and golden-ratio methods for solving \eqref{eq:MI2} \cite{eckstein2008family,malitsky2015projected,malitsky2019golden}.
%Various extensions to strongly monotone, co-monotone, weakly monotone, randomized coordinate, stochastic algorithms, distributed and decentralized settings, and their applications are promising research topics.
%Studying dynamical systems/inclusions and theoretically optimal complexity and performance associated with Halpern-type acceleration schemes is also a worth direction to consider.
%Finally, extending this kind of methods to equilibrium problems is also interesting to investigate. 

%%%%%%%%%%%%%%%%%%%%%%%%%%%%%%%%%%%%%%%%%%%%%%%%%%
%%% Bibliography ...
%%%%%%%%%%%%%%%%%%%%%%%%%%%%%%%%%%%%%%%%%%%%%%%%%%
%\bibliographystyle{ieeetr}
\bibliographystyle{plain}
%\bibliography{/Users/quoctd/Dropbox/E-Books/tran_bibtex_new}

\end{document}